\title [Veech holomorphic families of Riemann surfaces] {Veech holomorphic families of Riemann surfaces, holomorphic sections, and Diophantine problems}
\date{}
\author{Yoshihiko Shinomiya}
\address{Department of Mathematics
Tokyo Institute of Technology
2-12-1 Ookayama, Meguro-ku, Tokyo 152-8551, JAPAN}
\email{shinomiya.y.aa@m.titech.ac.jp}
\begin{document}
\maketitle
\bibliographystyle{alpha} 

\theoremstyle{plain}
\newtheorem{theorem}{Theorem}[section]

\theoremstyle{definition}
\newtheorem{definition}[theorem]{Definition}

\theoremstyle{plain}
\newtheorem{proposition}[theorem]{Proposition}

\theoremstyle{plain}
\newtheorem{lemma}[theorem]{Lemma}

\theoremstyle{plain}
\newtheorem{corollary}[theorem]{Corollary}

\theoremstyle{definition}
\newtheorem{example}[theorem]{Example}

\theoremstyle{remark}
\newtheorem*{remark}{\bf Remark}

\theoremstyle{remark}
\newtheorem*{problem}{\bf Problem}
\begin{abstract}
In this paper, we construct holomorphic families of Riemann surfaces from 
Veech groups and characterize their holomorphic sections by some points of corresponding flat surfaces.
The construction gives us concrete solutions for some Diophantine equations over function fields.
Moreover, we give upper bounds of the numbers of holomorphic sections of certain holomorphic families of Riemann surfaces.
\end{abstract}

\section{Introduction}
A holomorphic family $(M, \pi, B)$ of Riemann surfaces of type $(g,n)$ is a triple of a two-dimensional complex manifold $M$, a Riemann surface $B$ and a holomorphic map $\pi : M \rightarrow B$ such that  the fiber $X_t=\pi^{-1}(t)$ is a Riemann surface  of type $(g,n)$ for each $t\in B$ and the complex structure of $X_t$ depends holomorphically on the parameter $t$.
Holomorphic families of Riemann surfaces are first studied in algebraic geometry as the Diophantine problems for function fields on compact Riemann surfaces.

Let $M(B_0)$ be the field of all meromorphic functions on a compact Riemann surface $B_0$.
We take a three-variable irreducible homogeneous polynomial $f$ whose coefficients are in $M(B_0)$.
The Diophantine problem is to find all solutions $[X:Y:Z]\in \mathbb{P}^2(M(B_0))$ of the Diophantine equation $f(X,Y,Z)=0$.
It is known that a Diophantine equation defines holomorphic family of Riemann surfaces and
all the solutions of a Diophantine equation
correspond to all the holomorphic sections of the corresponding holomorphic family of Riemann surfaces.
By using Teichm\"uller theory, Imayoshi and Shiga \cite{ImaShi88} proved that
 the number of holomorphic sections of a locally non-trivial holomorphic family of Riemann surfaces is finite.
And Shiga \cite{Shiga97} estimated the number of holomorphic families of Riemann surfaces of certain types.
In this paper, we consider holomorphic families of Riemann surfaces obtained by Veech groups.

Let $X$ be a Riemann surface of type $(g, n)$. 
The Veech group $\Gamma(X,u)$ is the group of all elements in ${\rm PSL}(2, \mathbb{R})$ which induce affine homeomorphisms on a flat surface $(X,u)$.
A Veech group induces a Teichm\"uller disk which is a holomorphic isometric embedding of a hyperbolic plane $\mathbb{H}$ into the Teichm\"uller space of the Riemann surface $(X,u)$.
Veech \cite{Veech89} proved that the mirror Veech group $\bar \Gamma(X,u)$
 is a representation of  the action of mapping class group into the Teichm\"uller disk defined by $(X,u)$.
And the action is regarded as the action of Fuchsian group $\bar \Gamma(X,u)$ into $\mathbb{H}$.
Hence, $\mathbb{H}/\bar \Gamma(X,u)$ is holomorphically embedded into the moduli space $M(g, n)$. 
In this paper, we construct holomorphic families of Riemann surfaces from such  embeddings and observe their properties.
We call such holomorphic families of Riemann surfaces {\it Veech holomorphic families of Riemann surfaces}.
Especially, we study their holomorphic sections.

The paper is organized as follows.
In section  \ref{Preliminaries}, we define fundamental objects that we use in this paper, and observe their properties.
In section \ref{Veech}, we define Veech groups and show that orbifolds induced by Veech groups are embedded holomorphically and locally isometrically into moduli spaces. 
Moreover, a relation between geodesic flows on a flat surface and the Veech group is explained in this section.
A construction of Veech holomorphic families of Riemann surfaces is discussed in section \ref{vhf}.
Also, representations and monodromies of Veech holomorphic families of Riemann surfaces are observed in this section.
Section \ref{section} characterizes holomorphic sections of Veech holomorphic families of Riemann surfaces by some points of corresponding flat surfaces. 
Also, by applying this characterization, we find all the holomorphic sections of certain Veech holomorphic families of Riemann surfaces. 
In section \ref{Diophantine}, we construct examples of Diophantine equations from some Veech holomorphic families of Riemann surfaces.
All solutions of the Diophantine equations are given by using the characterization of holomorphic sections.
Section \ref{number} gives an upper bound of the number of holomorphic sections for certain Veech holomorphic families of Riemann surfaces.
The upper bound depends only on topological quantities of the base Riemann surface $B$ and fiber Riemann surfaces $X_t=\pi^{-1}(t)$ of holomorphic families of Riemann surfaces $(M, \pi, B)$.
To obtain the upper bound, we show a certain property of Fuchsian groups (Theorem \ref{fuchsian}), which may have its own interest.

\section{Preliminaries}\label{Preliminaries}
In this section, we see some properties of Teichm\"uller spaces, Kobayashi pseudo-metrics, Teichm\"uller modular groups, Bers fiber spaces and Fuchsian groups.

Let $X$ be a Riemann surface of type $(g, n)$ with $3g-3+n>0$.

\subsection{Teichm\"uller spaces}
Let us consider all pairs $(Y, f)$ of Riemann surfaces $Y$ of type $(g, n)$ and quasiconformal maps $f$ from $X$ onto $Y$.
Two such pairs $(Y_1,f_1)$ and $(Y_2, f_2)$ are said to be equivalent if there exists a conformal map $h: Y_1 \rightarrow Y_2$ which is homotopic to $f_2 \circ f_1^{-1}$.
This equivalence relation is called the Teichm\"uller equivalence. 
\begin{definition}[Teichm\"uller space]
The Teichm\"uller space $T(X)$ of $X$ is the set of all equivalence classes of such pairs.
We denote by $[Y, f]$ the equivalence class of a pair $(Y, f)$.
The point $[X, id: X\rightarrow X]$ is called the base point of $T(X)$.
\end{definition}
For any two points $[Y_1, f_1], [Y_2,f_2] \in T(X)$, we set
\begin{center}
$\displaystyle d_T \left([Y_1,f_1], [Y_2, f_2]\right) = \inf_h \log K(h)$.
\end{center}
Here, the infimum is taken over all quasiconformal maps $h: Y_1 \rightarrow Y_2$ which are homotopic to $f_2 \circ f_1^{-1}$ and $K(h)$ is the dilatation of $h$.
That is, let $\mu_h=\bar \partial h/ \partial h$ be the Beltrami coefficient of $h$.
By the definition of quasiconformal maps, the essential supremum norm $|| \mu_h||_\infty$ of $h$ is less than 1.
The dilatation $K(h)$ of $h$ is defined by $K(h)=\frac{1+||\mu_h||_\infty}{1-||\mu_h||_\infty}$. 
Then $d_T$ becomes a metric on $T(X)$.
\begin{definition}[Teichm\"uller metric]
The metric $d_T$ is called the Teichm\"uller metric.
\end{definition}
We consider geodesics of $T(X)$ with respect to the Teichm\"uller metric $d_T$.
\begin{definition}[Holomorphic quadratic differential]
A holomorphic quadratic differential $q$ on $X$ is a tensor whose restriction to every coordinate neighborhood $(U,z)$ is the form $fdz^2$, here $f$ is a holomorphic function on $U$.\\
We define $|q |$ to be the differential 2-form on $X$ whose restriction to every coordinate neighborhood $(U,z)$ has the form $|f|dxdy$ if $q$ equals $fdz^2$ in $U$. 
We say $q$ is integrable if its norm
\begin{eqnarray}
||q ||=\int\!\!\!\int_X|q| \nonumber 
\end{eqnarray}
is finite. 
Let $\bar X$ be the compact Riemann surface of genus $g$ obtained by filling all punctures of $X$.
We construct a function ${\rm ord} : \bar X \rightarrow \mathbb{Z}$.
If $z \in \bar X$ is a zero of $q$ of order $n$, we set ${\rm ord}(z)=n$.
If $z\in \bar X$ is a pole of $q$ of order $n$, we set ${\rm ord}(z)=-n$.
And, we set ${\rm ord}(z)=0$ if $z\in \bar X$ is not a zero or pole of $q$.
\end{definition}
\begin{remark}
Note that $q$ is integrable if and only if 
${\rm ord}(z)\geq -1$ for all $z \in \bar X$.
And, if $q \not = 0$, then by the Riemann-Roch theorem, we have 
\begin{center}
$\displaystyle \sum_{z \in \bar X} {\rm ord}(z)=4g-4$.
\end{center}
\end{remark}

\begin{definition}[Teichm\"uller map]
A quasiconformal map $f: X \rightarrow Y$  is said to be a Teichm\"uller map if its Beltrami coefficient $\mu_f$ is the form
\begin{center}
$\mu_f=k \frac{|q|}{q}$
\end{center} 
for some integrable holomorphic quadratic differential $q$ on $X$ and $k$ $ (0\leq k<1)$.  
\end{definition}
The next two theorems claim that there is an unique Teichm\"uller map from $X$ to $Y$ realizing the distance between the base point $[X, id]$ and each point $[Y, f]$.
\begin{theorem}[Teichm\"uller's uniqueness theorem]\label{Teich1}
Let $f_0: X \rightarrow  Y$ be a Teichm\"uller map and  $\mu_{f_0}=k \frac{|q|}{q}$ for some integrable holomorphic quadratic differential $q$ on $X$ and $k$ $ (0\leq k<1)$.
Then, for every quasiconformal map $f:X \rightarrow Y$ which is homotopic to $f_0$, the inequality
\begin{center}
$||\mu_f||_\infty\geq ||\mu_{f_0}||_\infty=k$
\end{center}
holds.
Where, $\mu_f$ is the Beltrami coefficient of $f$.
The equality holds if and only if $f=f_0$.
\end{theorem}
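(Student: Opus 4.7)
The plan is to apply the classical length--area (Gr\"otzsch--Teichm\"uller) argument in the flat structures attached to $q$ on $X$ and to the terminal holomorphic quadratic differential $q_*$ on $Y$. Off the singular set of $q$, one has local natural coordinates $\zeta = \xi + i\eta$ with $q = d\zeta^2$; then the hypothesis $\mu_{f_0} = k\,|q|/q$ becomes $\bar\partial f_0 = k\,\partial f_0$, so $f_0$ is the affine stretch $\zeta \mapsto K\xi + i\eta$ with $K = (1+k)/(1-k)$. The terminal $q_*$ on $Y$ is characterized by the property that its natural coordinates $w = u + iv$ satisfy $w \circ f_0 = K\xi + i\eta$, so in these paired coordinates $f_0$ is simply $(\xi,\eta)\mapsto (K\xi,\eta)$, and the problem reduces to a planar comparison in natural coordinates.

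For a quasiconformal competitor $f:X\to Y$ homotopic to $f_0$, I would cover $X$ (away from the singular sets) by horizontal rectangles in the $\zeta$-coordinate and use the homotopy---together with the closedness of the $1$-form $du$ away from the zeros of $q_*$---to show that on almost every horizontal segment $\gamma\colon \xi\in[0,L]$, $\eta=\mathrm{const}$, the displacement $\int_\gamma d(u\circ f) = \int_0^L\partial_\xi(u\circ f)\,d\xi$ equals the displacement $KL$ realized by $f_0$. Cauchy--Schwarz then yields
\begin{equation*}
(KL)^2 \ \le\ L \int_0^L \bigl(\partial_\xi(u\circ f)\bigr)^2\,d\xi,
\end{equation*}
and integrating in $\eta$ and summing over rectangles gives $\iint_X (\partial_\xi(u\circ f))^2\,|q| \ge K^2\|q\|$. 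Combined with the pointwise inequality $\bigl(\partial_\xi(u\circ f)\bigr)^2 \le \tfrac{1+|\mu_f|}{1-|\mu_f|}\,J_f$ (derived from $f_{\bar\zeta} = \mu_f f_\zeta$ and $J_f = |f_\zeta|^2(1-|\mu_f|^2)$) and the area identity $\iint_X J_f\,|q| = \|q_*\| = K\|q\|$, this chain produces
\begin{equation*}
K^2\,\|q\|\ \le\ \frac{1+\|\mu_f\|_\infty}{1-\|\mu_f\|_\infty}\,K\,\|q\|,
\end{equation*}
i.e.\ $\|\mu_f\|_\infty \ge k$, as required.

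For the equality case, I would trace back through the argument: equality in Cauchy--Schwarz forces $\partial_\xi(u\circ f)\equiv K$ along almost every horizontal, and equality in the pointwise dilatation estimate forces $\mu_f = k\,|q|/q$ almost everywhere; uniqueness of normalized solutions to the Beltrami equation then identifies $f$ with $f_0$. The step I expect to be the main obstacle is the horizontal-displacement identity $\int_0^L \partial_\xi(u\circ f)\,d\xi = KL$: the $\zeta$-coordinate is only local, $q$ carries zeros and poles, and horizontal trajectories of $q$ need not close up, so one cannot invoke Stokes on a single global chart. I would handle this by passing to a cover on which the multivalued function $u\circ f$ lifts to a single-valued one, using the homotopy from $f$ to $f_0$ to pin down the relevant boundary behavior, and exhausting by compact pieces whose complement has arbitrarily small $|q|$-measure near the singular set.
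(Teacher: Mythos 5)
The paper does not prove this statement: it is quoted as the classical Teichm\"uller uniqueness theorem, with the reader referred to Imayoshi--Taniguchi \cite{ImaTan92} for the proof. Your proposal is precisely the standard Gr\"otzsch--Teichm\"uller length--area argument found there (equivalently, the Reich--Strebel main inequality specialized to this setting), so there is no divergence of method to report. The quantitative chain you write down is correct: the pointwise bound $(\partial_\xi(u\circ f))^2\le \frac{1+|\mu_f|}{1-|\mu_f|}J_f$, the area identity $\iint_X J_f\,|q|=\|q_*\|=K\|q\|$, and the Cauchy--Schwarz step combine exactly as you say to give $K\le\frac{1+\|\mu_f\|_\infty}{1-\|\mu_f\|_\infty}$.

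The one point to repair is the step you yourself flag as the main obstacle: the horizontal-displacement claim $\int_0^L\partial_\xi(u\circ f)\,d\xi=KL$ is not an exact identity segment by segment. Since $f\simeq f_0$, the closed $1$-forms $f^*du$ and $f_0^*du$ are cohomologous, so $f^*du-f_0^*du=d\delta$ for a single-valued function $\delta$ that is bounded (continuity on the compactification, with care at the finitely many singularities and punctures of the integrable $q$); the correct statement is $\int_\gamma f^*du=KL+\delta(b)-\delta(a)=KL+O(1)$. The standard way to make the $O(1)$ harmless is not exhaustion near the singular set but averaging over long trajectories: almost every horizontal trajectory of $q$ is either closed or infinite, and applying Cauchy--Schwarz to segments of length $L\to\infty$ makes the relative error $2M/L$ vanish before integrating in $\eta$. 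With that substitution your argument closes, and the equality analysis (forcing $\mu_f=k|q|/q$ a.e., hence $f\circ f_0^{-1}$ conformal and homotopic to the identity, hence the identity since $3g-3+n>0$) is the standard conclusion.
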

\begin{theorem}[Teichm\"uller's existence theorem]\label{Teich2}
Let $f: X\rightarrow Y$ be a quasiconformal map.
There exists a Teichm\"uller map $f_0 : X \rightarrow Y$ which is homotopic to $f$.
\end{theorem}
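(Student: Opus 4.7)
The plan is to produce the desired Teichm\"uller map by showing that the Teichm\"uller embedding---sending each integrable holomorphic quadratic differential $q$ of norm less than $1$ to the Teichm\"uller class of its associated Teichm\"uller map---is a homeomorphism onto $T(X)$. Let $Q(X)$ denote the finite-dimensional real vector space of integrable holomorphic quadratic differentials on $X$, normed by $||q|| = \int\!\!\!\int_X |q|$, and set $D = \{q \in Q(X) : ||q|| < 1\}$. For nonzero $q \in D$ put $\mu_q = ||q|| \cdot \overline{q}/|q|$, and put $\mu_q = 0$ when $q = 0$. By the measurable Riemann mapping theorem there exists a quasiconformal map $f^{\mu_q} : X \to X^{\mu_q}$ with Beltrami coefficient $\mu_q$, and by construction $f^{\mu_q}$ is a Teichm\"uller map. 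Define
\begin{equation*}
\Phi : D \longrightarrow T(X), \qquad \Phi(q) = [X^{\mu_q}, f^{\mu_q}].
\end{equation*}
Once $\Phi$ is shown to be surjective, the preimage of $[Y, f]$ provides a Teichm\"uller map homotopic to $f$, which is the claim.

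I would then verify three properties of $\Phi$: continuity, injectivity, and properness. Continuity follows from the continuous dependence of normalized solutions of the Beltrami equation on their coefficients. Injectivity is a direct consequence of Theorem \ref{Teich1}: if $\Phi(q_1) = \Phi(q_2)$, the uniqueness statement, applied with $f^{\mu_{q_1}}$ as the extremal representative of the common class, forces $\mu_{q_1} = \mu_{q_2}$ and hence $q_1 = q_2$. For properness, Theorem \ref{Teich1} also yields the distance formula
\begin{equation*}
d_T\bigl([X, id], \Phi(q)\bigr) = \tfrac{1}{2}\log\frac{1 + ||q||}{1 - ||q||},
\end{equation*}
so the preimage of any $d_T$-bounded set is bounded away from $\partial D$, and is thus relatively compact in $D$ since $Q(X)$ is finite-dimensional.

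To finish, both $D$ and $T(X)$ are topological manifolds of real dimension $6g - 6 + 2n$---the former by Riemann-Roch applied to $Q(X)$, the latter by standard Teichm\"uller theory. Invariance of domain applied to the injective continuous map $\Phi$ then shows that $\Phi$ is open, and combined with properness and connectedness of $T(X)$ the image $\Phi(D)$ is a nonempty open and closed subset of $T(X)$, hence all of $T(X)$. The main obstacle I expect is this properness step together with the dimension-matching at the end: both rely on Theorem \ref{Teich1} and on knowing that $T(X)$ is a manifold of the correct real dimension, which are classical facts but lie outside the preliminaries established so far.
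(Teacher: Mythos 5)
The paper does not prove this statement: Theorems \ref{Teich1} and \ref{Teich2} are quoted as classical facts and the reader is referred to \cite{ImaTan92}, so there is no internal proof to compare against. Your proposal is, in substance, exactly the continuity-method proof given in that reference: the Teichm\"uller embedding $\Phi$ of the open unit ball of integrable holomorphic quadratic differentials into $T(X)$ is continuous, injective by the uniqueness theorem, and proper by the distance formula, and surjectivity follows from invariance of domain plus connectedness of $T(X)$. The outline is correct, including the small points you gloss over (that $\mu_{q_1}=\mu_{q_2}$ forces $q_1=q_2$ because a positive meromorphic ratio of holomorphic quadratic differentials is constant, and that properness follows since $\{\,\|q\|\le r\,\}$ is compact for $r<1$ in the finite-dimensional space $Q(X)$). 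The one point that needs genuine care is the one you flag at the end: the fact that $T(X)$ is a topological manifold of real dimension $6g-6+2n$ must be established \emph{independently} of the Teichm\"uller embedding, since the statement in the paper that $T(X)\cong\mathbb{C}^{3g-3+n}$ is itself usually derived from the existence theorem; to avoid circularity one uses Fricke coordinates or the Bers embedding (both available in \cite{ImaTan92}) to get the dimension count first. Also note the paper's convention is $d_T=\log K$ rather than $\tfrac12\log K$, so your distance formula should read $d_T([X,id],\Phi(q))=\log\frac{1+\|q\|}{1-\|q\|}$; this does not affect the argument.
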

Therefore, we conclude that the Teichm\"uller distance between the base point $[X, id]$ and each point $[Y, f]$ 
is 
\begin{center}
$d_T([X, id], [Y, f])= \log K(f_0)=\log \frac{1+k}{1-k}$
\end{center}
if the Beltrami coefficient $\mu_{f_0}$ of the unique Teichm\"uller map $f_0: X\rightarrow Y$ which is homotopic to  $f$
is the form $k\frac{|q|}{q}$.
For each geodesic ray $r:[0, \infty) \rightarrow T(X)$ with $r(0)=[X, id]$, there exists an unique  holomorphic quadratic differential $q$ on $X$ such that $r(t)=[Y_t, f_t]$ if we set $f_t:X\rightarrow Y_t$ to be the Teichm\"uller map with the Beltrami coefficient $\mu_{f_t}=\frac{t-1}{t+1}\frac{|q|}{q}$.
Therefore, $d_T$ is a complete metric.
Since the complex dimension of the Banach space of all holomorphic quadratic differentials on $X$ is $3g-3+n$, $T(X)$ is homeomorphic to $\mathbb{C}^{3g-3+n}$ and hence, $T(X)$ is proper.
For the proof of them, see \cite{ImaTan92}.

The Teichm\"uller space $T(X)$ of $X$ has a complex structure induced by the Bers embedding which is an embedding of $T(X)$ into a certain  Banach space of holomorphic quadratic differentials on $X$.
The Teichm\"uller space is embedded into the Banach space as a bounded domain. 
See \cite{ImaTan92}.

\subsection{Kobayashi pseudo-metrics}
On a complex manifold, we can define a pseudo-metric which is call the Kobayashi pseudo-metric (\cite{Kobayashi05}).  
Let $M$ be a connected complex manifold.
Given two points $z, w \in M$, we choose points $z=z_0, z_1, \cdots, z_{k-1}, z_k=w$ of $M$, points $a_1, b_1, \cdots, a_k, b_k$ of a unit disk $\mathbb{D}$, and holomorphic maps $f_1, \cdots, f_k$ from $\mathbb{D}$ to $M$ with $f_i(a_i)=z_{i-1}$ and $f_i(b_i)=z_i$ 
for $i=1, \cdots, k$.
For each choice of points and maps as above, we consider the number
\begin{center}
$d_{\mathbb{D}}(a_1,b_1)+ \cdots + d_{\mathbb{D}}(a_k,b_k)$,
\end{center}
where $d_\mathbb{D}$ is the hyperbolic metric on $\mathbb{D}$.
Let $d_M^K(z, w)$ be the infimum of the numbers obtained as above.
It is easy to see that  $d_M^K: M \times M \rightarrow \mathbb{R}$ is continuous  and satisfies the axioms of pseudo-metric:
\begin{center}
$d_M^K(z,w) \geq 0$, $d_M^K(z,w)=d_M^K(w,z)$ and $d_M^K(z,r)+d_M^K(r, w) \geq d_M^K(z, w)$.
\end{center}

\begin{definition}[Kobayashi metric, Kobayashi hyperbolic manifold]
The pseudo-metric $d_M^K$ on $M$ is called the Kobayashi pseudo-metric of $M$.
If $d_M^K$  satisfies the axioms of metric, we call $d_M^K$ the Kobayashi metric of $M$, and $M$ a Kobayashi hyperbolic manifold.
If a complex manifold $M$ is a Kobayashi hyperbolic manifold and the Kobayashi metric $d_M^K$ is proper, we call $M$ a proper Kobayashi hyperbolic manifold.  
\end{definition}
By definition, Kobayashi pseudo-metric has a distance decreasing property.
That is, for complex manifolds $M$, $N$ and a holomorphic map $f: M \rightarrow N$, we have
\begin{center}
$d_M^K(z,w) \geq  d_N^K(f(z), f(w))$
\end{center}
for all $z,w \in M$.

Royden \cite{Royden71} prove the following theorem.
\begin{theorem}\label{Royden}
On the Teichm\"uller space $T(X)$ of $X$, the Kobayashi pseudo-metric $d_{T(X)}^K$ coincides with the Teichm\"uller metric $d_T$. 
\end{theorem}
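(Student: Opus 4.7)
The plan is to prove the two inequalities $d^K_{T(X)} \le d_T$ and $d_T \le d^K_{T(X)}$ separately.

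The easy direction $d^K_{T(X)} \le d_T$ follows from the existence of Teichm\"uller disks. Fix points $p = [Y_1,f_1]$ and $q = [Y_2,f_2]$ in $T(X)$. By Theorem \ref{Teich2} there is a Teichm\"uller map from $Y_1$ onto $Y_2$ with Beltrami coefficient $k\,|q_0|/q_0$ for some integrable holomorphic quadratic differential $q_0$ on $Y_1$ and some $k \in [0,1)$. For $t\in\mathbb{D}$, let $f_t : Y_1 \to Y_t$ be the quasiconformal map with Beltrami coefficient $\mu_{f_t} = t\,|q_0|/q_0$, and define $F:\mathbb{D} \to T(X)$ by $F(t) = [Y_t, f_t \circ f_1]$. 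Standard results in Teichm\"uller theory (see \cite{ImaTan92}) show that $F$ is holomorphic and that $d_T(F(s),F(t)) = d_\mathbb{D}(s,t)$ for all $s,t\in\mathbb{D}$. Choosing $s=0$ and $t=k$ we obtain $F(0)=p$, $F(k)=q$, and therefore
\begin{center}
$d^K_{T(X)}(p,q) \le d_\mathbb{D}(0,k) = \log\frac{1+k}{1-k} = d_T(p,q)$,
\end{center}
by the definition of the Kobayashi pseudo-metric.

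The opposite inequality $d_T \le d^K_{T(X)}$ is the substantive direction. By the definition of $d^K_{T(X)}$ via chains of holomorphic disks and the triangle inequality for $d_T$, it suffices to show the distance-decreasing property: every holomorphic map $F:\mathbb{D}\to T(X)$ satisfies $d_T(F(a),F(b)) \le d_\mathbb{D}(a,b)$. After pre-composing with a M\"obius transformation we may assume $a=0$, and after integrating along the segment $[0,b]$ it is enough to prove the infinitesimal statement: the derivative of $F$ at $0$ has Teichm\"uller norm at most $1$, where the Teichm\"uller norm on the tangent space $T_{F(0)}T(X)$ is dual to the $L^1$-norm on integrable holomorphic quadratic differentials via the pairing $(\mu,\varphi)\mapsto \mathrm{Re}\int \mu\varphi$.

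To carry out the infinitesimal step I would use Hahn--Banach duality together with the classical Schwarz lemma. The tangent space at $F(0)=[Y,g]$ is naturally identified with the quotient of the Banach space of $L^\infty$ Beltrami differentials on $Y$ by the kernel of the pairing with integrable holomorphic quadratic differentials, and under this identification the Teichm\"uller norm is precisely the dual norm coming from $\|\varphi\|_1$. For each integrable holomorphic quadratic differential $\varphi$ on $Y$ with $\|\varphi\|_1=1$, the pairing of $\varphi$ with the Beltrami differential of a Teichm\"uller representative for $F(t)$ defines a holomorphic function $h_\varphi:\mathbb{D}\to\mathbb{D}$ with $h_\varphi(0)=0$; the Schwarz lemma yields $|h_\varphi'(0)|\le 1$. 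Taking the supremum over all such $\varphi$ and invoking Hahn--Banach to realize the Teichm\"uller norm as a supremum of these pairings gives $\|F'(0)\|_T \le 1$, which completes the argument.

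The main obstacle is the construction of the holomorphic family of Beltrami representatives for $F(t)$ whose pairing with a fixed $\varphi$ is holomorphic in $t$, so that Schwarz applies cleanly. This is usually handled via Ahlfors--Weill sections of the Bers projection, or equivalently by using the complex structure of $T(X)$ through the Bers embedding to trivialize the universal Beltrami bundle locally; once this holomorphic lift is available, the duality computation above is essentially a formal consequence of the Hahn--Banach theorem.
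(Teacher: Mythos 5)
The paper itself offers no proof of Theorem \ref{Royden}; it is quoted from Royden \cite{Royden71} as a known result, so there is no in-paper argument to compare against and I can only assess your sketch on its own terms. Your first half, $d^K_{T(X)}\le d_T$ via the Teichm\"uller disk through the two given points, is correct and is the standard argument.

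The second half has a genuine gap, located exactly at the point you yourself flag as ``the main obstacle,'' and your proposed resolution does not close it. First, as literally written, the function $h_\varphi(t)$ obtained by pairing $\varphi$ with the Beltrami coefficient of a \emph{Teichm\"uller representative} of $F(t)$ is not holomorphic in $t$: that coefficient has the form $k(t)\,|\psi_t|/\psi_t$ with $k(t)$ real, so it cannot depend holomorphically on $t$; one must instead pair $\varphi$ with a holomorphic lift $\tilde F$ of $F$ through the Bers projection $\Phi: M(Y)\rightarrow T(Y)$. Second, the Ahlfors--Weill section (equivalently, a local holomorphic trivialization via the Bers embedding) only furnishes such a lift over a subdisk $\{|t|<r\}$, namely the preimage under $\beta\circ F$ of the ball of radius $\frac{1}{2}$ in the Bers norm. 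The Schwarz lemma then yields only $|h_\varphi'(0)|\le 1/r$, hence $\|F'(0)\|_T\le 1/r$; since $r$ admits no a priori lower bound tending to $1$, this proves the two metrics are comparable but not equal. Obtaining the sharp bound requires either a \emph{global} holomorphic lift of $F:\mathbb{D}\rightarrow T(X)$ through $\Phi$ --- the Earle--Kra--Krushkal lifting theorem, which rests on Slodkowski's extension of holomorphic motions and which, once available, makes your infinitesimal reduction unnecessary, since the Banach-valued Schwarz lemma gives $\|\tilde F(t)\|_\infty\le|t|$ and hence $d_T(F(0),F(t))\le\log\frac{1+|t|}{1-|t|}$ directly --- or Royden's original finite-dimensional argument based on the Reich--Strebel inequality. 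A smaller point: the step ``integrate along $[0,b]$'' silently uses that $d_T$ is dominated by the integral of the infinitesimal Teichm\"uller norm, a standard but nontrivial fact that should be stated or cited.
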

This implies that $T(X)$ is a proper Kobayashi hyperbolic manifold.
\begin{remark}
If $M$ is a hyperbolic Riemann surface, $d_M^K$ coincides with the complete hyperbolic metric which is compatible with its complex structure.
\end{remark}
\begin{proposition}[\cite{Kobayashi05}]\label{Kobayashi}
Let $M$ be a complex manifold and $\pi: \widetilde M \rightarrow M$ be a covering map.
Choose any two points $z,w \in M$ and fix $\tilde z \in \pi^{-1}(z)$.
Then
\begin{center}
$\displaystyle d_M^K(z, w) = \inf_{\tilde w \in \pi^{-1}(w)}d_{\widetilde M}^K(\tilde z, \tilde w)$.
\end{center}
\end{proposition}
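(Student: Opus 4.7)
The plan is to prove the two inequalities separately, the direction $\leq$ being essentially a distance-decreasing property and the direction $\geq$ requiring a lifting argument.

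For the easy direction, I would fix any $\tilde w \in \pi^{-1}(w)$ and apply the distance-decreasing property of $d^K$ for the holomorphic map $\pi : \widetilde M \to M$ stated just before Theorem \ref{Royden}. This gives
\[
d_M^K(z, w) \;=\; d_M^K(\pi(\tilde z), \pi(\tilde w)) \;\leq\; d_{\widetilde M}^K(\tilde z, \tilde w),
\]
and taking the infimum over $\tilde w \in \pi^{-1}(w)$ yields $d_M^K(z, w) \leq \inf_{\tilde w} d_{\widetilde M}^K(\tilde z, \tilde w)$.

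For the reverse inequality, I would proceed from the very definition of $d_M^K$. Given $\varepsilon > 0$, choose a chain $z = z_0, z_1, \ldots, z_k = w$ in $M$, points $a_1, b_1, \ldots, a_k, b_k$ in $\mathbb{D}$, and holomorphic maps $f_i : \mathbb{D} \to M$ with $f_i(a_i) = z_{i-1}$, $f_i(b_i) = z_i$, realizing
\[
\sum_{i=1}^{k} d_{\mathbb{D}}(a_i, b_i) \;<\; d_M^K(z, w) + \varepsilon.
\]
The key step is to lift this chain to $\widetilde M$. Since $\mathbb{D}$ is simply connected, the covering map $\pi$ admits lifts of each $f_i$; I would use this inductively: first lift $f_1$ to $\tilde f_1 : \mathbb{D} \to \widetilde M$ with $\tilde f_1(a_1) = \tilde z$, set $\tilde z_1 := \tilde f_1(b_1)$, then lift $f_2$ so that $\tilde f_2(a_2) = \tilde z_1$, and so on. Setting $\tilde w := \tilde f_k(b_k)$, we have $\pi(\tilde w) = w$, and the chain $(\tilde z = \tilde z_0, \ldots, \tilde z_k = \tilde w)$ with the maps $\tilde f_i$ and the same points $a_i, b_i$ is admissible in the definition of $d_{\widetilde M}^K$. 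Therefore
\[
d_{\widetilde M}^K(\tilde z, \tilde w) \;\leq\; \sum_{i=1}^{k} d_{\mathbb{D}}(a_i, b_i) \;<\; d_M^K(z, w) + \varepsilon,
\]
which gives $\inf_{\tilde w \in \pi^{-1}(w)} d_{\widetilde M}^K(\tilde z, \tilde w) \leq d_M^K(z, w) + \varepsilon$, and letting $\varepsilon \to 0$ finishes the proof.

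The step I expect to be the main obstacle, or at least the place one must be careful, is the inductive lifting: one needs that each $f_i : \mathbb{D} \to M$ admits a holomorphic lift through $\pi$ with prescribed initial value. This is where simple connectedness of $\mathbb{D}$ is used crucially, via the standard path-lifting/monodromy theorem for covering spaces, together with the fact that a continuous lift of a holomorphic map through a holomorphic covering is automatically holomorphic. Everything else is bookkeeping with the definition of the Kobayashi pseudo-metric.
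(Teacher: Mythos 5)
Your proof is correct and follows essentially the same route as the paper: the distance-decreasing property of $\pi$ for one inequality, and lifting an almost-optimal chain of holomorphic disks (using simple connectedness of $\mathbb{D}$) for the other. The only cosmetic difference is that the paper phrases the second half as a contradiction argument, whereas you argue directly and let $\varepsilon \to 0$.
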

\begin{proof}
Since $\pi$ is holomorphic, we have 
\begin{center}
$\displaystyle d_M^K(z, w) \leq  \inf_{\tilde w \in \pi^{-1}(w)}d_{\widetilde M}^K(\tilde z, \tilde w)$.
\end{center}
Assume that there exists $\epsilon >0$ such that
\begin{center}
$\displaystyle d_M^K(z, w)  +\epsilon <\inf_{\tilde w \in \pi^{-1}(w)}d_{\widetilde M}^K(\tilde z, \tilde w)$.
\end{center}
By the definition of Kobayashi pseudo-metric, there exist $a_1, b_1, \cdots, a_k, b_k \in \mathbb{D}$ and holomorphic maps $f_1, \cdots, f_k$ such that
\begin{center}
$z=f(a_1)$, $f(b_i)=f(a_{i+1})$ $(i=1, \cdots, k-1)$, $f_k(a_k)=w$ 
\end{center}
and 
\begin{center}
$\displaystyle d_M^K(z, w)+ \epsilon > \sum_{i=1}^k d_{\mathbb{D}}(a_i, b_i)$.
\end{center}
We lift $f_1, \cdots, f_k$ to holomorphic maps $\tilde f_1, \cdots, \tilde f_k$ from $\mathbb{D}$ to $\widetilde M$
such that 
$\tilde z= \tilde f_1(a_1)$ and  $\tilde f(b_i)= \tilde f(a_{i+1})$ $(i=1, \cdots, k-1)$.
If we set $\tilde w=\tilde f_k(b_k)$, then $\pi(\tilde w)=w$ and 
\begin{center}
$\displaystyle d_{\widetilde M}^K(\tilde z, \tilde w) \leq \sum _{i=1}^k d_{\mathbb{D}}(a_i, b_i)$.
\end{center}
Hence, $d_{\widetilde M}^K(\tilde z, \tilde w) < d_M^K(z, w)+ \epsilon$.
This contradicts the assumption. 
\end{proof}

\subsection{Teichm\"uller modular groups}
Let ${\rm Mod}(X)$ be the group of all homotopy classes of quasiconformal self-maps of $X$.
We denote by $[h]$ the equivalence class of a quasiconformal self-map $h$ of $X$.
\begin{definition}
The group ${\rm Mod}(X)$ is called the the Teichm\"uller modular group of $X$.
\end{definition}
The Teichm\"uller modular group ${\rm Mod}(X)$ acts on $T(X)$ as follows.
For each $[h]\in {\rm Mod}(X)$, a map $[h]_*: T(X)\rightarrow T(X)$ is defined by
\begin{center}
$[h]_*[Y,f]=[Y, f \circ h^{-1}]$
\end{center}
for all $[Y, f] \in T(X)$.
Clearly, this action is isometric with respect to the Teichm\"uller metric $d_T$.
Moreover, it is known that $[h]_*$ is a holomorphic  self-map of $T(X)$ and the action of ${\rm Mod}(X)$ is properly discontinuous. 
See \cite{ImaTan92}.
Also, the quotient space $T(X)/{\rm Mod}(X)$ coincides with the moduli space $M(g, n)$ of Riemann surfaces of type $(g, n)$.

\subsection{Bers fiber spaces}\label{bfs}
Let $p: \mathbb{H} \rightarrow X$ be the universal covering map with the covering transformation group $G$.
Let $f: X \rightarrow Y$ be a quasiconformal map with Beltrami coefficient $\mu_f$.
We define the Beltrami coefficient $\tilde \mu_f$ on $\hat{\mathbb{C}}$ which is the lift of $\mu_f$ on $\mathbb{H}$ and equals $0$ on $\hat{\mathbb{C}}-\mathbb{H}$.
Let $w^{\mu_f}: \hat{\mathbb{C}} \rightarrow \hat{\mathbb{C}}$ be the unique $\tilde \mu_f$-quasiconformal map  which fixes $0, 1, \infty$.
Then two domains $w^{\mu_{f_1}}(\mathbb{H})$ and $w^{\mu_{f_2}}(\mathbb{H})$ are equal if two quasiconformal maps $f_1: X \rightarrow Y_1$ and $f_2: X \rightarrow Y_2$ are Teichm\"uller equivalent.
We denote by $\mathbb{H}^t$ the domain $w^{\mu_f}(\mathbb{H})$ for $t=[Y, f] \in T(X)$.
Then
\begin{center}
$F(G)=\{(t, z) : t \in T(X),\ z \in \mathbb{H}^t\}$
\end{center}
is called the Bers fiber space over $T(X)$.
We induce the complex structure on  $F(G)$ as a submanifold of  $T(X) \times \left(\mathbb{C}-\{0, 1\}\right)$.
Bers prove the following.
\begin{theorem}[\cite{Bers73}]\label{Bers}
Let us fix a point $a \in X$ and set $\dot{X}=X-\{a\}$.
The Bers fiber space $F(G)$ is biholomorphic to $T(\dot{X})$.
\end{theorem}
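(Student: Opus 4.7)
The plan is to exhibit an explicit biholomorphism $\Phi:F(G)\to T(\dot X)$. The underlying idea is that a pair $(t,z)\in F(G)$ encodes not only a Teichm\"uller deformation $t=[Y,f]$ of $X$, but also the additional datum of a point $z$ in the universal cover $\mathbb{H}^t$ of $Y$. Its image $b=\pi_Y(z)\in Y$ under the quotient map is a candidate puncture, and the fact that $z$ is a specific lift provides, thanks to the simple connectedness of $\mathbb{H}^t$, a canonical homotopy class of quasiconformal maps $\dot X\to\dot Y:=Y\setminus\{b\}$. This is precisely the extra topological information needed to upgrade a point of $T(X)$ to a point of $T(\dot X)=T(X\setminus\{a\})$.

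First I would construct $\Phi$. Fix once and for all a lift $\tilde a\in\mathbb{H}$ of $a$. Given $(t,z)\in F(G)$ with $t=[Y,f]$, let $\tilde f_z:\mathbb{H}\to\mathbb{H}^t$ be the unique lift of a representative of $f$ satisfying $\tilde f_z(\tilde a)=z$; one obtains $\tilde f_z$ by post-composing $w^{\mu_f}|_{\mathbb{H}}$ with the unique covering transformation of $\mathbb{H}^t$ carrying $w^{\mu_f}(\tilde a)$ to $z$. Then $\tilde f_z$ descends to a quasiconformal $f_z:X\to Y$ with $f_z(a)=b$, and I set $\dot f:=f_z|_{\dot X}:\dot X\to\dot Y$, defining $\Phi(t,z):=[\dot Y,\dot f]\in T(\dot X)$. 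For the inverse $\Psi:T(\dot X)\to F(G)$, given $[\dot Y,\dot f]$ I would use the removable singularity theorem for quasiconformal maps to extend $\dot f$ to a quasiconformal $f:X\to Y$ sending $a$ to the filled puncture $b\in Y$, form $t=[Y,f]\in T(X)$, and take $z:=w^{\mu_f}(\tilde a)\in\mathbb{H}^t$; set $\Psi([\dot Y,\dot f]):=(t,z)$.

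After checking that $\Phi$ and $\Psi$ are well-defined on Teichm\"uller classes and mutually inverse (both reductions use that a homotopy between representatives of the same marking lifts to the simply connected universal cover), I would establish holomorphy. The space $F(G)$ inherits its complex structure from the ambient $T(X)\times(\mathbb{C}\setminus\{0,1\})$, and $T(\dot X)$ carries its Bers embedding complex structure; the map $\Phi$ is assembled from the solutions $w^{\mu_f}$ of the Beltrami equation, whose dependence on the parameter $\mu_f$ is holomorphic, so $\Phi$ is holomorphic, and since it is a bijective holomorphic map between connected complex manifolds of the same finite complex dimension $3g-2+n$, it is a biholomorphism. The main obstacle is the bookkeeping: one must verify carefully that the normalization by $\tilde a$ makes $[\dot Y,\dot f]$ genuinely depend only on $(t,z)$ and not on the representative $(Y,f)$, and dually that the quasiconformal extension of $\dot f$ is canonical up to isotopy rel $a$ so that $z=w^{\mu_f}(\tilde a)$ is unambiguous; both points hinge on the simple connectedness of $\mathbb{H}^t$ and of $\mathbb{H}$.
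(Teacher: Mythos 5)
The paper does not prove this statement; it is quoted from \cite{Bers73}, so there is no internal argument to compare against. Your overall strategy is the standard one behind Bers' isomorphism theorem: a point $(t,z)\in F(G)$ should encode a marked once-punctured surface, with the lift $z$ supplying the extra homotopy-class data via simple connectedness of $\mathbb{H}^t$. That intuition is correct, and the inverse direction (extend $\dot f$ over the puncture, set $z=w^{\mu_f}(\tilde a)$) is fine.

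However, your explicit recipe for $\Phi$ breaks down at a concrete point. You define $\tilde f_z$ by post-composing $w^{\mu_f}|_{\mathbb{H}}$ with ``the unique covering transformation of $\mathbb{H}^t$ carrying $w^{\mu_f}(\tilde a)$ to $z$.'' Such a covering transformation exists only when $z$ lies in the $G^t$-orbit of $w^{\mu_f}(\tilde a)$, where $G^t=w^{\mu_f}G(w^{\mu_f})^{-1}$; this orbit is a countable discrete subset of $\mathbb{H}^t$, whereas the fiber of $F(G)$ over $t$ is all of $\mathbb{H}^t$. Equivalently: every lift of a fixed representative $f$ sends $\tilde a$ into the fiber over $f(a)$, so for generic $z$ the candidate puncture $b=\pi_Y(z)$ is not $f(a)$ and no lift of $f$ hits $z$. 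The repair is the point-pushing construction: choose a path in $\mathbb{H}^t$ from $w^{\mu_f}(\tilde a)$ to $z$, project it to a path in $Y$ from $f(a)$ to $b$, and modify $f$ by an ambient isotopy dragging $f(a)$ to $b$ along that path; independence of the chosen path is exactly where simple connectedness of $\mathbb{H}^t$ is used. Beyond this, the two items you defer to ``bookkeeping'' are in fact the substance of Bers' theorem: (i) well-definedness in $t$ requires showing that for Teichm\"uller-equivalent representatives $(Y_1,f_1)$, $(Y_2,f_2)$ the maps $w^{\mu_{f_1}}$ and $w^{\mu_{f_2}}$, which agree only on $\hat{\mathbb{C}}-\mathbb{H}$ and not on $\mathbb{H}$, nevertheless produce markings of the punctured surfaces that agree up to homotopy fixing the new puncture; and (ii) holomorphy of $\Phi$ with respect to the Bers-embedding complex structure on $T(\dot X)$ is not an immediate consequence of holomorphic dependence of $w^{\mu}$ on $\mu$ and requires an actual computation with local holomorphic sections. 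As a proof sketch your proposal identifies the right objects, but as written it has a step that fails for almost every $z$ and leaves the two genuinely hard verifications unaddressed.
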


\subsection{Holomorphic families of Riemann surfaces}\label{families}
\begin{definition}[Holomorphic family of Riemann surfaces]
Let $\bar M$ be a two-dimensional complex manifold and $A$ be a  one-dimensional analytic subset of $\bar M$ or empty.
Let $B$ be a Riemann surface. 
Assume that there exists a proper holomorphic map $\bar \pi: \bar M \rightarrow B$ satisfying the following two conditions.
\begin{enumerate} 
\item[(1)] By setting $M=\bar M -A$ and $\pi=\bar \pi  |_M$, the holomorphic map $\pi$ is of maximal rank at every point of $M$. 
\item[(2)] The fiber $X_t=\pi^{-1}(t)$ over each $t \in B$ is a Riemann surface of fixed finite type $(g, n)$
. 
\end{enumerate} 
We call such triple $(M, \pi, B)$ a holomorphic family of Riemann surfaces of type $(g, n)$ over $B$.
We assume that $3g-3+n>0$.
\end{definition}
Let $(M,\pi,B)$ be a holomorphic family of Riemann surfaces of type $(g, n)$.
Let $M(g, n)$ be the moduli space of Riemann surfaces of type $(g, n)$.
Then we have a holomorphic map $\phi: B \ni t \mapsto X_t =\pi^{-1}(t)\in  M(g, n)$.
Let $p : \mathbb{H} \rightarrow B$ be the universal covering map with the covering transformation group $\Gamma<{\rm PSL}(2, \mathbb{R})$.
We fix $t_0 \in B$ and set $X=\pi^{-1}(t_0)$. 
Since $M(g, n)=T(X)/{\rm Mod}(X)$, we have a lift $\Phi : \mathbb{H}\rightarrow T(X)$ of $\phi$. 
The holomorphic map  $\Phi$ induces a homomorphism $\chi : \Gamma \rightarrow {\rm Mod}(X)$ such that $\Phi \circ A= \chi(A) \circ \Phi $ for every $A \in \Gamma$.
\begin{definition}[Representation and monodromy]
We call the holomorphic map $\Phi$ a representation of $(M, \pi, B)$ into $T(X)$ and the homomorphism $\chi$ the monodromy with respect to $\Phi$. 
\end{definition}

\begin{definition}[Locally triviality and locally non-triviality]
A holomorphic family of Riemann surfaces $(M, \pi, B)$ is called locally trivial if the induced map $\phi: B \rightarrow M(g,n)$ is constant. 
And, $(M, \pi, B)$ is called locally non-trivial if the induced map $\phi: B \rightarrow M(g,n)$ is non-constant. 
\end{definition}

\subsection{Ford region and Shimizu's lemma}
Let $\Gamma \subset  {\rm PSL}(2, \mathbb{R})$ be a Fuchsian group of type $(p,k : \nu_1, \cdots, \nu_k)$ $(\nu_i \in \{2, 3, \cdots, \infty\})$.
That is, $\mathbb{H}/\Gamma$ has genus $p$ and cone points whose orders are $\nu_1, \cdots, \nu_k$.
Now, a cone point of order $\infty$ means a puncture of $\mathbb{H}/\Gamma$.
\begin{definition}[Fundamental domains]
A domain $D$ in $\mathbb{H}$ is called a fundamental domain for $\Gamma$ if it satisfies the followings:
\begin{enumerate} 
\item[(1)] For all $A \in \Gamma$ $(A \not = id)$, $A(D)\cap D = \phi $, and 
\item[(2)] For all $z \in \mathbb{H}$, there exists $A \in \Gamma$ such that $A(z) \in {\rm cl}(D)$.
\end{enumerate} 
Here, ${\rm cl}(D)$ is the closure of $D$.
\end{definition}

Suppose that $\Gamma$ contains $ \tiny \left[\left(
  \begin{array}{cccc}
    1 & 1 \\
    0 & 1
  \end{array}
  \right)
  \right]$ and it is not a power of other elements of $\Gamma$.
We set $\Gamma_0=\{{\tiny \left[\left(
  \begin{array}{cccc}
    a & b \\
    c & d
  \end{array}
  \right)
  \right]} \in \Gamma: c \not =0\}$, and 
for all   
  $ A\in \Gamma_0$, 
\begin{center}
$D_A=\{ z \in\mathbb{H}: |z+\frac{d}{c}|>\frac{1}{|c|}\}$.
\end{center}
\begin{definition}[Ford regions]
The region
\begin{center}
$\displaystyle D=\{ z \in\mathbb{H}: |{\rm Re}(z)|<\frac{1}{2}\} \cap \bigcap_{A \in \Gamma_0}D_A$
\end{center}
is called the Ford region of $\Gamma$.
\end{definition}
\begin{proposition}\label{fundamental}
The Ford region $D$ is a fundamental domain for $\Gamma$.
\end{proposition}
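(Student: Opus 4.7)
The plan is to verify the two defining properties of a fundamental domain separately, exploiting the geometric meaning of the isometric circles $\partial D_A = \{|cz+d|=1/|c|\}$. The key classical fact I will use is that for $A = \left[\begin{pmatrix} a & b \\ c & d\end{pmatrix}\right] \in \mathrm{PSL}(2,\mathbb{R})$ with $c\neq 0$, the Euclidean derivative satisfies $|A'(z)|=1/|cz+d|^2$, so $A$ is a Euclidean isometry exactly on $\partial D_A$, it sends the exterior of its own isometric circle $\partial D_A$ onto the interior of the isometric circle of $A^{-1}$, and $\mathrm{Im}(A(z)) = \mathrm{Im}(z)/|cz+d|^2$. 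I will also need Shimizu's lemma to bound the geometry of $\bigcap D_A$ near the cusp.

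For condition $(1)$, I would split into two cases. If $A = T^n$ with $T = \left[\begin{pmatrix}1 & 1\\ 0 & 1\end{pmatrix}\right]$ and $n\neq 0$, then $A$ translates real parts by $n$, so $A(D)\cap D$ is empty because of the vertical strip $|\mathrm{Re}\,z|<1/2$. Otherwise $A\in\Gamma_0$, and for $z\in D\subset D_A$ one has $|cz+d|>1$, hence by the isometric circle computation above, $A(z)$ lies strictly inside the isometric circle of $A^{-1}$, i.e.\ $A(z)\notin D_{A^{-1}}\supset D$.

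For condition $(2)$, fix $z\in\mathbb{H}$. By Shimizu's lemma, every $A\in\Gamma_0$ satisfies $|c|\geq 1$, so each $D_A^c$ is contained in a Euclidean half-disk of radius at most $1$ sitting on $\mathbb{R}$; consequently $\{w\in\mathbb{H} : \mathrm{Im}\,w>1\}\subset \bigcap_{A\in\Gamma_0}D_A$. Proper discontinuity of $\Gamma$ on $\mathbb{H}$ forces the orbit $\Gamma z$ to be discrete with imaginary parts bounded above (otherwise infinitely many orbit points would accumulate above height $1$, but each $D_{A}^c$ is compactly contained in $\mathbb{H}\cup\mathbb{R}$, so the complement of $\bigcap_{A\in\Gamma_0} D_A$ has only finitely many orbit points of $z$ at a given bounded height). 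Pick $z_0\in \Gamma z$ of maximum imaginary part. Then for every $A\in\Gamma_0$ the inequality $\mathrm{Im}(A z_0)\leq \mathrm{Im}(z_0)$ forces $|cz_0+d|\geq 1$, which means $z_0\in\mathrm{cl}(D_A)$. Finally, applying an appropriate power $T^n$ puts the real part of $z_0$ into $[-1/2,1/2]$ without changing its imaginary part, and this translate lies in $\mathrm{cl}(D)$.

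The main technical obstacle is the existence of the maximum-imaginary-part representative in step $(2)$; this requires knowing that orbits have only finitely many points above any fixed height, which in turn uses both Shimizu's lemma (to control the isometric circles near $\infty$) and the proper discontinuity of $\Gamma$ on $\mathbb{H}$. Once this is established, the rest of the argument is a direct consequence of the expansion/contraction behavior of $A$ across its isometric circle, and the translation symmetry supplied by $T$.
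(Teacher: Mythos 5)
The paper offers no proof of this proposition at all---it simply refers the reader to Ford's book \cite{Ford25}---so there is nothing internal to compare against; what you have written is the classical isometric-circle argument that the citation points to, and it is correct in outline. Condition (1) via the expansion identity $\mathrm{Im}(A(z))=\mathrm{Im}(z)/|cz+d|^2$ and the fact that $A$ carries the exterior of its isometric circle into the interior of that of $A^{-1}$ is exactly right, as is the strategy for condition (2) of maximizing the imaginary part over the orbit and then normalizing the real part by a power of $T=\left[\left(\begin{smallmatrix}1&1\\0&1\end{smallmatrix}\right)\right]$. Two points deserve tightening. First, the case split in (1) tacitly assumes that every nonidentity element with $c=0$ is a power of $T$; this is precisely where the hypothesis that $T$ is not a power of another element of $\Gamma$ is used (discreteness excludes hyperbolic elements fixing $\infty$ alongside the parabolic $T$, and primitivity forces the translation subgroup to be exactly $\langle T\rangle$), and it should be stated. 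Second, your parenthetical justification that the orbit attains a maximal imaginary part is muddled: the orbit points of large imaginary part lie \emph{inside} $\bigcap_{A\in\Gamma_0}D_A$, not in its complement, so counting orbit points in the complement proves nothing. The clean route is that Shimizu's lemma gives $\mathrm{Im}(Az)=\mathrm{Im}(z)/|cz+d|^2\leq 1/\mathrm{Im}(z)$ for every $A\in\Gamma_0$, so the imaginary parts in the orbit are bounded by $\max(\mathrm{Im}(z),1/\mathrm{Im}(z))$, and proper discontinuity applied to the compact set $\{w:\ \epsilon\leq\mathrm{Im}(w)\leq M,\ |\mathrm{Re}(w)|\leq \frac{1}{2}\}$, which meets the $\langle T\rangle$-orbit of every orbit point with $\epsilon\leq\mathrm{Im}\leq M$, shows that only finitely many values of $\mathrm{Im}$ occur above any $\epsilon>0$; hence the supremum is attained. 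Finally, note that the closing translation $T^n$ preserves imaginary parts, so the normalized point is still of maximal height and therefore still lies in every $\mathrm{cl}(D_A)$. With these repairs your proof is complete and is, in substance, the proof the paper delegates to \cite{Ford25}.
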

For a proof of proposition \ref{fundamental}, see \cite{Ford25}.
\begin{remark}
It is clear that the hyperbolic area of $\mathbb{H}/\Gamma$ induced by the universal covering map $\mathbb{H} \rightarrow \mathbb{H}/\Gamma$ equals to that of the Ford region $D$ in $\mathbb{H}$.
And it is known that the Ford region $D$ becomes a finite sided geodesic polygon in $\mathbb{H}$.
\end{remark}
\begin{lemma}[Shimizu's lemma]\label{Shimizu}
For all ${\tiny \left[\left(
  \begin{array}{cccc}
    a & b \\
    c & d
  \end{array}
  \right)
  \right]} \in \Gamma_0$, the inequality $|c|\geq1$ holds.
\end{lemma}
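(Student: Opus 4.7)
The plan is to argue by contradiction using the discreteness of the Fuchsian group $\Gamma$. Suppose some $A\in\Gamma_0$ has $(2,1)$-entry $c$ with $0<|c|<1$, and write $T\in\Gamma$ for the parabolic with matrix $\begin{pmatrix}1&1\\0&1\end{pmatrix}$. I would build the sequence $A_1:=A$ and $A_{n+1}:=A_n T A_n^{-1}\in\Gamma$. Denoting the entries of a fixed $\mathrm{SL}(2,\mathbb{R})$-representative of $A_n$ by $a_n,b_n,c_n,d_n$, a direct matrix multiplication (using $a_n d_n-b_n c_n=1$) gives
\[
A_{n+1}=\begin{pmatrix}1-a_n c_n & a_n^2\\ -c_n^2 & 1+a_n c_n\end{pmatrix}.
\]
In particular $c_{n+1}=-c_n^2$, so $|c_n|=|c|^{2^{n-1}}$, which tends to $0$ super-geometrically while remaining nonzero for every $n$.

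Next, I would use this quadratic decay to show that $A_n\to T$ in $\mathrm{PSL}(2,\mathbb{R})$. The recursion $a_{n+1}=1-a_n c_n$ together with $\sum_n|c_n|<\infty$ yields a uniform bound on $|a_n|$ (for instance by comparing with the convergent product $\prod_n(1+|c_n|)$); combined with $c_n\to 0$, this forces $a_n\to 1$, $b_{n+1}=a_n^2\to 1$, and $d_{n+1}=1+a_n c_n\to 1$. Hence the chosen matrix representatives converge in $\mathrm{SL}(2,\mathbb{R})$ to $\begin{pmatrix}1&1\\0&1\end{pmatrix}$, so $A_n\to T$ in $\mathrm{PSL}(2,\mathbb{R})$, while $A_n\neq T$ for every $n$ because $c_n\neq 0$. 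This produces a sequence of distinct elements of $\Gamma$ accumulating at $T\in\Gamma$, contradicting the discreteness of $\Gamma$.

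The only real obstacle is the uniform bound on $(a_n)$: without it the convergence $A_n\to T$ cannot be extracted from $c_n\to 0$ alone, since the cross term $a_n c_n$ governs both $a_{n+1}$ and $d_{n+1}$. Fortunately, the super-geometric decay $|c_n|=|c|^{2^{n-1}}$ makes the necessary estimate elementary; a merely geometric rate would not suffice so cleanly. Once the boundedness is in hand, the remaining limits and the final appeal to discreteness are immediate. Note that the primitivity hypothesis on $T$ plays no role in this argument (it is needed for the Ford region description, not for Shimizu's inequality itself).
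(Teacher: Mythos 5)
Your proof is correct and complete: the conjugation recursion $A_{n+1}=A_nTA_n^{-1}$ does give $c_{n+1}=-c_n^2$, the product bound $\max(1,|a_{n+1}|)\leq \max(1,|a_n|)(1+|c_n|)$ makes $(a_n)$ bounded, and the resulting convergence $A_n\to T$ with $A_n\neq T$ contradicts discreteness. The paper offers no proof of its own (it defers to the reference \cite{ImaTan92}), and your argument is exactly the classical iteration proof of Shimizu's lemma given there; your closing observation that the primitivity hypothesis on the parabolic is irrelevant to this inequality is also correct.
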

For a proof, see \cite{ImaTan92}.
\section{Flat surface and Veech groups}\label{Veech}
In this section, we define Veech groups and see  some applications of Veech groups.
Let $X$ be a Riemann surface of type $(g, n)$ and $C$ be a set of all punctures of $X$ and finitely many points of $X$. 
\begin{definition}[Flat structure and flat surface]
A flat structure ${\it u}$ on $X$ with critical points on $C$ is an atlas of $X^{\prime}=X-C$ which satisfies the following conditions.
\begin{enumerate} 
\item[(1)] Local coordinates of $u$ are compatible with the orientation on $X$ induced by its complex structure.
\item[(2)] For coordinate neighborhoods $(U,z)$ and $(V,w)$ of $ u$ with $U\cap V \not=\phi $, the transition function is the form 
\begin{center}
$w=\pm z+c$ 
\end{center}
in $z(U\cap V)$ for some $c \in \mathbb{C}$.
\item[(3)]  The atlas ${\it u}$ is maximal with respect to $(1)$ and $(2)$.
\end{enumerate}
A pair $(X, u)$ of a Riemann surface $X$ and a flat structure $u$ with critical points on $C$  is called a flat surface with critical points on $C$.
Each point in $C$ is called a critical point of $(X,u)$.
In this paper, we assume that the area ${\rm Area}(X,u)$ of $X$ with respect to $u$ is finite.
\end{definition}

We construct a flat structure $u_q$ on $X$ which is compatible with the complex structure of $X$ from a holomorphic quadratic differential $q$ on $X$.
We set $C$ to be the set of all punctures of $X$ and all zeros of $q$.
For each $p_0\in X^{\prime}=X-C$, we can choose an open neighborhood $U$ such that
\begin{eqnarray}
z(p)=\int_{p_0}^{p}\sqrt q \nonumber
\end{eqnarray} 
is well-defined and  an injective function in $U$. This function is holomorphic in $U$ since $q$ is a holomorphic quadratic differential on $X$. 
If $(U,z)$ and $(V,w)$ are pairs of such neighborhoods and functions with $ U \cap V\not=\phi$, then we have $dw^2=q=dz^2$ in $U \cap V$. 
Hence, $w=\pm z+c$ in $z(U\cap V)$ for some $c \in \mathbb{C}$. 
The flat structure $u_q$ is the maximal flat structure which contains such pairs.
The set $C$ is the critical point set of $u_q$.
And the area ${\rm Area}(X, u_q)$ equals to  the norm $||q ||$ of $q$.
Conversely, if $u=\{(U_\lambda, z_\lambda)\}$ is a flat structure on $X$ which is compatible with the complex structure of $X$ and the critical point set of $u$ is $C$, then $q=\{dz_\lambda^2\}$ is a meromorphic quadratic differential on $X$ with $||q||={\rm Area}(X, u)$.
All zeros and poles of $q$ are contained in $C$.
If $p \in C$ is a zero of $q$ with ${\rm odr}(p)=n$, then the angle around $p$ with respect to $u$ is $(n+2)\pi$. 
And the angle around a single pole of $q$ with respect to $u$ is $\pi$.

Fix an integrable holomorphic quadratic differential $q$ on $X$.
Let $u=u_q=\{(U_\lambda, z_\lambda)\}$ be the flat structure induced by $q$ and $C$ be the critical point set of $u$. 
For each $A \in {\rm SL}(2, \mathbb{R})$,  we denote by $T_A$ the linear map on $\mathbb{C}=\mathbb{R}^2$  corresponding to $A$.
Then $u_A=\{(U_\lambda, T_A\circ z_\lambda)\}$ is also a flat structure on $X$ with critical points on $C$.
Note that $(X, u_A)$ is not conformal equivalent to the original Riemann surface $X$ in general. 
We obtain a map $\hat \iota : {\rm SL}(2, \mathbb{R})\rightarrow T(X)$ such that $\hat \iota(A)=[(X, u_A), id: X \rightarrow (X, u_A)]$.
Here, we consider $u_A$ as a new complex structure on $X$.
By the following lemma, $\hat \iota$ induces a map $\iota : {\rm SO}(2)\setminus {\rm SL}(2, \mathbb{R}) \rightarrow T(X)$.
\begin{lemma}\label{map1}
$\hat \iota(A)=\hat \iota (B)$ if $BA^{-1}\in {\rm SO}(2)$.
\end{lemma}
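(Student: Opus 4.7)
The plan is to verify that the identity map on $X$ is a conformal equivalence between the two new complex structures induced by $u_A$ and $u_B$, from which the two marked pairs $((X,u_A),\mathrm{id})$ and $((X,u_B),\mathrm{id})$ fall into the same Teichm\"uller class and hence define the same point of $T(X)$.

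First I would set $R=BA^{-1}$, so by hypothesis $R\in\mathrm{SO}(2)$ and $B=RA$, giving $T_B=T_R\circ T_A$ as linear maps on $\mathbb{C}=\mathbb{R}^2$. The point to exploit is that although a general element $T_A$ of $\mathrm{SL}(2,\mathbb{R})$ is only $\mathbb{R}$-linear, the linear map $T_R$ is multiplication by the unit complex number $e^{i\theta}$ on $\mathbb{C}$, hence a $\mathbb{C}$-linear (in particular biholomorphic) self-map of $\mathbb{C}$.

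Next I would compare the two atlases chart by chart. On each $U_\lambda$ the $u_A$-chart is $T_A\circ z_\lambda$ and the $u_B$-chart is $T_B\circ z_\lambda=T_R\circ(T_A\circ z_\lambda)$, so the transition from the first chart to the second is exactly $T_R$. Thus, read through these charts, the identity map $\mathrm{id}:(X,u_A)\to(X,u_B)$ is locally given by the holomorphic map $T_R$, and is therefore holomorphic everywhere. By the same argument applied to $R^{-1}$, its inverse is holomorphic as well, so $\mathrm{id}:(X,u_A)\to(X,u_B)$ is a conformal map. Since it is trivially homotopic to $\mathrm{id}\circ\mathrm{id}^{-1}$, the pairs $((X,u_A),\mathrm{id})$ and $((X,u_B),\mathrm{id})$ are Teichm\"uller equivalent, giving $\hat\iota(A)=\hat\iota(B)$.

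The argument is essentially a one-line calculation once the definitions are unpacked. The only subtle point worth checking is that applying any $T_A\in\mathrm{SL}(2,\mathbb{R})$ to a flat chart $z_\lambda$ actually yields a new flat structure (the original transitions $w=\pm z+c$ become $w=\pm z+T_A(c)$ because $T_A$ is $\mathbb{R}$-linear), and that both signs in $\pm z$ give holomorphic transitions, so $u_A$ and $u_B$ each descend to a genuine complex structure on $X$; once this is established, no real obstacle remains.
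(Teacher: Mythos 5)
Your argument is correct and is essentially the paper's own proof: both compute that the transition between the $u_A$-chart and the $u_B$-chart on each $U_\lambda$ is $T_{BA^{-1}}$, which is a rotation of $\mathbb{C}$ and hence conformal, so the identity map $(X,u_A)\to(X,u_B)$ is conformal and the two marked pairs are Teichm\"uller equivalent. Your version just spells out a few more of the routine details.
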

\begin{proof}
Let $A,B \in {\rm SL}(2,\mathbb{R})$ with $BA^{-1}\in {\rm SO}(2)$.
For each $(U, z) \in u$, $(T_B \circ z) \circ (T_A \circ z)^{-1}=T_B \circ T_{A^{-1}}=T_{BA^{-1}}$ is a conformal map. 
Hence, $id : (X, u_A) \rightarrow (X, u_B)$ is a conformal map.
\end{proof}
The identification of ${\rm SO}(2)\setminus {\rm SL}(2, \mathbb{R})$ with $\mathbb{H}$ by the bijective map 
\begin{center}
${\rm SO}(2)\setminus {\rm SL}(2, \mathbb{R}) \ni {\rm SO}(2)\cdot A \mapsto -\overline{A^{-1}(i)} \in \mathbb{H}$
\end{center}
 induces a map $\iota : \mathbb{H} \rightarrow T(X)$. 
Here, $A^{-1}(\cdot )$ is a M\"obius transformation.
It is known that this map $\iota$ is a holomorphic isometric embedding from a hyperbolic plane $\mathbb{H}$ into the Teichm\"uller space $T(X)$ with the Teichm\"uller distance. 
(See \cite{HerSch07}.) 
And $\iota$ satisfies $\iota(i)=[X, id]$.
Moreover, it is easy to verify that  the Beltrami coefficient of the Teichm\"uller map from $\iota(i)$ to $\iota(t)$ is $\frac{i-t}{i+t}\frac{|q|}{q}$. 
Conversely, we have the following.
\begin{proposition}\label{embtohqd}
Every holomorphic isometric embedding $\iota: \mathbb{H} \rightarrow T(X)$ with $\iota(i)=[X,id]$ is constructed from some flat structure as above.
\end{proposition}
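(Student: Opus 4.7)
The plan is to extract an integrable holomorphic quadratic differential $q$ from the restriction of $\iota$ to a single geodesic of $\mathbb{H}$, and then use holomorphic rigidity to identify $\iota$ with the embedding $\iota_q$ built from the flat structure $u_q$ via the construction preceding the proposition.

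First, restrict $\iota$ to the positive imaginary axis $\{is : s > 0\}$, which is a hyperbolic geodesic of $\mathbb{H}$ through $i$. By Royden's theorem (Theorem \ref{Royden}), $\iota$ is an isometric embedding for the Teichm\"uller metric, so $s \mapsto \iota(is)$ is a Teichm\"uller geodesic in $T(X)$ through $[X, id] = \iota(i)$. By Teichm\"uller's existence and uniqueness theorems (Theorems \ref{Teich2} and \ref{Teich1}), there exists a unique integrable holomorphic quadratic differential $q$ on $X$, up to positive scaling, such that the Teichm\"uller map from $[X, id]$ to $\iota(is)$ has Beltrami coefficient $\frac{1-s}{1+s}\frac{|q|}{q}$ for every $s > 0$ (with the factor interpreted as having the correct sign as $s$ crosses $1$, i.e.\ absorbing a change of sign of $q$ on the two rays).

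Next, use this $q$ to define the flat structure $u_q$ and the embedding $\iota_q : \mathbb{H} \to T(X)$ as in the construction preceding the proposition. The computation already recorded in the excerpt gives that the Beltrami coefficient of the Teichm\"uller map from $\iota_q(i) = [X, id]$ to $\iota_q(t)$ is $\frac{i-t}{i+t}\frac{|q|}{q}$. Specializing to $t = is$ gives $\frac{1-s}{1+s}\frac{|q|}{q}$, exactly matching the formula above. By Teichm\"uller's uniqueness theorem, $\iota(is) = \iota_q(is)$ for every $s > 0$.

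Finally, invoke holomorphic rigidity. In a chart around $[X, id]$ provided by the Bers embedding, both $\iota$ and $\iota_q$ become holomorphic maps from the one-dimensional complex manifold $\mathbb{H}$ into a bounded domain in $\mathbb{C}^{3g-3+n}$, and each coordinate of $\iota - \iota_q$ is a holomorphic function on $\mathbb{H}$ vanishing on the imaginary axis. Since the imaginary axis has accumulation points in $\mathbb{H}$, the identity theorem forces each coordinate to vanish on a neighborhood of $i$, and analytic continuation then yields $\iota \equiv \iota_q$ on all of $\mathbb{H}$. Hence $\iota$ is constructed from the flat structure $u_q$, as claimed. The main delicate point is aligning the normalization and sign of $q$ with the parametrization of the geodesic $\iota|_{i\mathbb{R}_{+}}$; once the Beltrami-coefficient formulas line up, holomorphic rigidity does the rest.
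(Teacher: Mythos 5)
Your proposal is correct and follows essentially the same route as the paper: restrict $\iota$ to the imaginary axis, use Teichm\"uller's existence and uniqueness theorems to extract the quadratic differential $q$ governing that geodesic, build $\iota_q$, and conclude from agreement on the axis that $\iota=\iota_q$. The only difference is that you make explicit (via the Bers embedding and the identity theorem) the final step ``agreement on the imaginary axis implies agreement everywhere,'' which the paper leaves implicit; this is a welcome clarification rather than a divergence.
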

\begin{proof}
Let $\iota: \mathbb{H} \rightarrow T(X)$ be a holomorphic isometric embedding.
Then the geodesic ray  $r(t)=it$ $(t \geq 1)$ in $\mathbb{H}$ maps to a geodesic ray in $T(X)$.

By theorem \ref{Teich1} and theorem \ref{Teich2}, there exists a holomorphic quadratic differential $q$ on $X$ such that the Teichm\"uller map form $[X, id ]$ to $\iota \circ r(t)$ has the Beltrami coefficient  $\frac{1-t}{1+t} \frac{|q|}{q}$ for all $t \geq 1$.
Note that these Teichm\"uller maps are deformations by 
$
\tiny{\left(
  \begin{array}{cccc}
   \sqrt{t} & 0  \\
    0 & 1/\sqrt{t}
  \end{array}
  \right)}$ of the flat structure $u_q$ defined by $q$.
Let   $\iota_q: \mathbb{H} \rightarrow T(X)$ be the holomorphic isometric embedding constructed from $u_q$.
Since $\iota(it)=\iota_q(it)$ for all $t \geq 1$, we conclude that $\iota=\iota_q$.
\end{proof}
\begin{definition}[Teichm\"uller disk]
We call the image of a holomorphic isometric embedding of $\mathbb{H}$ into $T(g,n)$ a Teichm\"uller disk.
\end{definition}
Let $(X, u)$ be a flat surface with critical points on $C$.
Let $\iota :\mathbb{H} \rightarrow T(X)$ be a  holomorphic isometric embedding constructed from $(X, u)$. 
We consider the image of  the Teichm\"uller disk $\Delta=\iota(\mathbb{H})$ into the moduli space $M(g, n)$ and  hence, the subgroup ${\rm Stab}(\Delta)$ of ${\rm Mod}(X)$ which consists of all elements of ${\rm Mod}(X)$ which fixing $\Delta$.
For this, we define the affine group ${\rm Aff}^+(X, u)$ of $(X,u)$.
\begin{definition}[Affine group of $(X,u)$]
The affine group ${\rm Aff}^+(X, u)$ of the flat surface $(X,u)$ is the group of all quasiconformal maps $h$ of $X$ onto itself which satisfy $h(C)=C$ and are affine with respect to the flat structure $u$. 
This means that for $(U,z), (V,w) \in u$ with $h(U) \subseteq V$, the homeomorphism $w\circ h\circ z^{-1}$ is the form $z \mapsto Az+c$ for some $A \in {\rm GL} (2,\mathbb{R})$ and $c \in \mathbb{C}$. 
Each element in ${\rm Aff}^+(X, u)$ is called an affine homeomorphism on $(X, u)$.
\end{definition}

For each $h \in {\rm Aff}^+(X, u)$, the derivative $A \in {\rm GL} (2, \mathbb{R})$ of  the affine map $w\circ h\circ z^{-1}$ is uniquely determined up to the sign since $u$ is a flat structure.
And $A$ is always in ${\rm SL}(2,\mathbb{R})$ since ${\rm Area}(X, u) =\int_X |q|=\int_X h^*(|q|)=\det(A)\cdot{\rm Area}(X, u)$. 
Here, $q$ is a holomorphic quadratic differential corresponding to $(X, u)$.
Thus, we have a group homomorphism

\begin{center}
$D: {\rm Aff}^+(X,u) \rightarrow {\rm PSL}(2,\mathbb{R})$.
\end{center}
We call this homomorphism the derivative map.

\begin{definition}[Veech group of $(X, u)$]
We call $\Gamma(X,u)=D({\rm Aff}^+(X,u))$ the Veech group of $(X, u)$. 
\end{definition}

Veech\cite{Veech89} prove the following.
\begin{proposition}
All homomorphisms of ${\rm Aff}^+(X, u)$ are not homotopic to each other.
Therefore, ${\rm Aff}^+(X, u)$ is regarded as a subgroup of ${\rm Mod}(X)$. 
\end{proposition}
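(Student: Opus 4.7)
The statement asserts that distinct elements of ${\rm Aff}^+(X, u)$ are not mutually homotopic, equivalently that the map ${\rm Aff}^+(X,u) \to {\rm Mod}(X)$ sending $h \mapsto [h]$ is injective. Since ${\rm Aff}^+(X,u)$ is a group, it suffices to show that the only element of ${\rm Aff}^+(X, u)$ homotopic to $id$ is $id$ itself. Given such an $h$, I would write $A = D(h) \in {\rm SL}(2, \mathbb{R})$ and split into two cases according to whether $A \in {\rm SO}(2)$ or not.

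Suppose first that $A \notin {\rm SO}(2)$. Let $q$ be the integrable holomorphic quadratic differential associated with $u$, and pick a local $u$-coordinate $z$ in which $q = dz^2$. Writing $h$ locally as $z \mapsto Az + c$ and expressing the real-linear map $A$ in complex form as $z \mapsto \alpha z + \beta \bar z$, the Beltrami coefficient $\mu_h = \beta/\alpha$ is a nonzero constant, which I write as $r e^{i\phi}$ with $0 < r < 1$. Setting $q' = e^{-i\phi} q$, which is again an integrable holomorphic quadratic differential on $X$ with $||q'|| = ||q||$, a direct calculation in $q'$-coordinates shows $\mu_h = r\,|q'|/q'$. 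Hence $h : X \to X$ is a Teichm\"uller map with dilatation $K(h) = (1+r)/(1-r) > 1$. Teichm\"uller's uniqueness theorem (Theorem \ref{Teich1}) applied to $f_0 = h$ and $f = id$, which are homotopic by hypothesis, then gives $0 = ||\mu_{id}||_\infty \geq ||\mu_h||_\infty = r > 0$, a contradiction.

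The remaining case is $A \in {\rm SO}(2)$, where $\mu_h \equiv 0$ so that $h$ is a conformal self-homeomorphism of $X$. Because $3g-3+n > 0$, the surface $X$ is hyperbolic of finite type, and a standard fact -- a consequence of the faithfulness of the ${\rm Mod}(X)$-action on $T(X)$ together with Teichm\"uller's uniqueness applied to the trivial Beltrami coefficient -- asserts that any conformal self-map of such an $X$ homotopic to $id$ must equal $id$. Hence $h = id$ in this case as well.

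The technical step I expect to be the most delicate is the first case: verifying carefully that the constant Beltrami coefficient $\mu_h$ in $u$-coordinates really is of Teichm\"uller form for the rotated differential $q' = e^{-i\phi}q$, so that Theorem \ref{Teich1} is applicable and the argument closes. Once this identification is established, Teichm\"uller uniqueness supplies the contradiction immediately, and the conformal case is handled by the well-known rigidity of hyperbolic Riemann surfaces of finite type.
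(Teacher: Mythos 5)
The paper does not actually prove this proposition; it is stated as a citation to Veech \cite{Veech89}, so there is no in-paper argument to compare against. Your proposal supplies a correct and essentially complete proof along the standard lines. The reduction to the kernel is legitimate since ${\rm Aff}^+(X,u)$ is a group and homotopy is compatible with composition, and the key computation checks out: for $A\notin{\rm SO}(2)$ the Beltrami coefficient of $h$ in the flat coordinates is the constant $\beta/\alpha=re^{i\phi}$ with $0<r<1$ (because $|\alpha|^2-|\beta|^2=\det A=1$), it patches to a global Beltrami differential since the transition functions $z\mapsto\pm z+c$ leave $d\bar z/dz$ invariant, and it equals $r\,|q'|/q'$ for $q'=e^{-i\phi}q$, so Theorem \ref{Teich1} applied to $f_0=h$ and $f=id$ yields $0\geq r>0$, a contradiction. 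The conformal case also follows directly from Theorem \ref{Teich1} with $k=0$ and $f_0=id$. One small caveat: your parenthetical appeal to ``the faithfulness of the ${\rm Mod}(X)$-action on $T(X)$'' is not the right justification, since that action is in fact not faithful for $(g,n)\in\{(0,4),(1,1),(1,2),(2,0)\}$ (the hyperelliptic or elliptic involution acts trivially); fortunately your second justification, Teichm\"uller uniqueness with trivial Beltrami coefficient, is exactly what is needed and suffices on its own, so the argument stands once that remark is dropped or corrected.
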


By setting $\iota(t)=[X_t, id]$ for each $t \in \mathbb{H}$, we have the following theorem.
\begin{theorem}[\cite{Veech89}, \cite{EarGar97}, \cite{HerSch07}]
The group ${\rm Stab}(\Delta)$
coincides with ${\rm Aff}^+(X, u)$.
For any $h \in {\rm Aff}^+(X, u)$ and $[X_t, id] \in \Delta$,
\begin{center}
$h_*[X_t, id] =[X_t, h^{-1}]=[X_{\bar A(t)}, id]$.
\end{center}
Here, 
$A=D(h)$, 
$
R=\tiny{\left(
  \begin{array}{cccc}
   -1 & 0  \\
    0 & 1
  \end{array}
  \right)}$, and
$\bar A=RAR^{-1}$ acts on $\mathbb{H}$ as a
 M\"obius transformation. 
\end{theorem}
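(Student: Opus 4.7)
The plan is to prove both inclusions ${\rm Aff}^+(X, u) \subseteq {\rm Stab}(\Delta)$ and ${\rm Stab}(\Delta) \subseteq {\rm Aff}^+(X, u)$, with the action formula emerging from the first direction.

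For $\supseteq$, I would fix $h \in {\rm Aff}^+(X, u)$ with $A = D(h)$ and compute $h_*[X, u_B, \mathrm{id}] = [X, u_B, h^{-1}]$ directly, then rewrite the target via a change of charts. The key observation is that the underlying map $h$, viewed as a map $(X, u_B) \to (X, u_{BA^{-1}})$, has derivative $T_{BA^{-1}} T_A T_B^{-1} = \mathrm{Id}$ when read in the chart pair $(T_B z_\lambda, T_{BA^{-1}} z_\mu)$, so it is a conformal equivalence between these two Riemann surfaces and is (trivially) homotopic to itself. By the definition of Teichm\"uller equivalence this yields $h_* \iota(\Psi(B)) = \iota(\Psi(BA^{-1}))$, where $\Psi(B) := -\overline{B^{-1}(i)}$ is the parametrization implicit in the identification of ${\rm SO}(2)\backslash {\rm SL}(2,\mathbb{R})$ with $\mathbb{H}$. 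A direct calculation with $2\times 2$ matrices using $\bar A = R A R^{-1}$ and the fact that the entries of $A$ are real then shows $\Psi(BA^{-1}) = \bar A(\Psi(B))$: both sides reduce to $-\overline{A(B^{-1}(i))}$, which is exactly the stated formula.

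For $\subseteq$, I would take $[g] \in {\rm Stab}(\Delta)$ and use that $g_*|_\Delta$ is a holomorphic self-isometry of $\Delta$; pulled back through $\iota$ this is a holomorphic isometry $\varphi$ of $\mathbb{H}$, hence an element of ${\rm PSL}(2,\mathbb{R})$. Setting $A = R\varphi R^{-1}$ gives $\varphi = \bar A$, and it remains to construct $h \in {\rm Aff}^+(X, u)$ with $D(h) = A$ and $[h] = [g]$. My approach would be: by Teichm\"uller's existence theorem there is a Teichm\"uller map $f_0$ in the homotopy class of $g^{-1}$ realizing the distance from $[X, \mathrm{id}]$ to $g_*[X, \mathrm{id}] = \iota(\bar A(i))$; since the target lies on $\Delta$, the Beltrami coefficient of $f_0$ is a constant multiple of $|q_0|/q_0$, where $q_0$ is the holomorphic quadratic differential associated to $u$. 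One then argues that such an $f_0$ is globally $\mathbb{R}$-affine in the flat charts of $u$, so that $h := f_0^{-1}$ belongs to ${\rm Aff}^+(X, u)$; the equalities $D(h) = A$ and $[h] = [g]$ follow by comparing induced actions on $\Delta$ with the $\supseteq$ direction.

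The main obstacle is the affineness assertion in $\subseteq$: the condition that the Beltrami coefficient of $f_0$ is constant in flat charts only forces $f_0$ to satisfy a constant Beltrami equation in each chart, whose solutions are $\mathbb{R}$-affine maps post-composed with arbitrary holomorphic functions, and removing this holomorphic freedom requires global information. The cleanest route is to invoke the uniqueness of the quadratic differential defining a Teichm\"uller disk: since $g_*$ preserves $\Delta = \Delta_{q_0}$, the pull-back $g^* q_0$ defines the same Teichm\"uller disk, forcing $g^* q_0 = c \, q_0$ for some $c \in \mathbb{C}^*$. This relation identifies $g$, up to isotopy, with an affine self-map of $(X, u)$ whose derivative in ${\rm PSL}(2,\mathbb{R})$ is the one dictated by $\varphi$. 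The rigidity of the quadratic-differential-to-disk correspondence is what really does the work in this direction.
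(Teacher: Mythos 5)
The paper does not prove this theorem at all: it is quoted directly from \cite{Veech89}, \cite{EarGar97} and \cite{HerSch07}, so there is no internal argument to compare yours against. Judged on its own, your outline is essentially the standard Earle--Gardiner/Veech proof, and the parts you work out in detail are correct. In the inclusion ${\rm Aff}^+(X,u)\subseteq{\rm Stab}(\Delta)$, the chart computation is right: reading $h$ in the chart pair $(T_Bz_\lambda, T_{BA^{-1}}z_\mu)$ gives derivative $T_{BA^{-1}AB^{-1}}=T_{\pm I}$, so $h$ is conformal from $(X,u_B)$ to $(X,u_{BA^{-1}})$ and $h_*[(X,u_B),id]=[(X,u_{BA^{-1}}),id]$; and since $R$ has determinant $-1$ and hence acts as $z\mapsto-\bar z$, one indeed gets $\bar A\bigl(-\overline{B^{-1}(i)}\bigr)=-\overline{A(B^{-1}(i))}=\Psi(BA^{-1})$, which is the stated formula $h_*[X_t,id]=[X_{\bar A(t)},id]$.

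For the reverse inclusion you correctly isolate the crux (affineness of the Teichm\"uller representative) and name the right rigidity principle, but two steps still need tightening before this is a proof rather than a plan. First, $g_*\Delta$ is naturally presented as a Teichm\"uller disk centered at $g_*[X,id]=[X,g^{-1}]$, not at $[X,id]$; to conclude $g^*q_0=c\,q_0$ you must recentre at $[X,id]\in\Delta$ and then appeal to the uniqueness behind Proposition \ref{embtohqd} (equivalently, to Teichm\"uller's uniqueness theorem applied to the geodesic rays of $\Delta$ emanating from $[X,id]$, whose initial differentials are precisely the unimodular multiples of $q_0$). Second, knowing that the Beltrami coefficient of $f_0$ is a constant multiple of $|q_0|/q_0$ only makes $f_0$ affine from the $q_0$-charts to the charts of the \emph{terminal} differential $q_1$ on the target copy of $X$; membership in ${\rm Aff}^+(X,u)$ requires in addition that $q_1$ be a complex multiple of $q_0$, and this is exactly where the relation $g^*q_0=c\,q_0$ must be fed back in. With those two points made explicit, your argument reproduces the proof in the cited sources.
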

\begin{corollary}[\cite{Veech89}, \cite{EarGar97}, \cite{HerSch07}]
The Veech groups $\Gamma(X, u)$ is a Fuchsian group and 
we have the following commutative diagram.
Here, $\bar \Gamma(X, u)=R\Gamma(X, u)R^{-1}$.
\[
\begin{xy}
(0,0)="a"*{\Delta
},
"a"+/r11em/="b"*{T(X)},
"a"+/d5em/="c"*{\Delta/{\rm Stab}(\Delta)},
"c"+/r11em/="d"*{M(g, n)},
"a"+/l8em/="e"*{\mathbb{H}},
"c"+/l8em/="f"*{\mathbb{H}/\bar \Gamma (X, u)},
\ar@{^{(}-_{>}} "a"+<1.2em,-.em>;"b"+<-2.1em,.em>
\ar "a"+<.em,-.8em>;"c"+<.em,.8em>^{/{\rm Stab}(\Delta)}
\ar "b"+<.em,-1.em>;"d"+<-.em,.5em>^{/{\rm Mod}(X)}
\ar@{^{(}-_{>}} "c"+<3.em,.em>;"d"+<-2.1em,.em>
\ar "e"+<1.em,.em>;"a"+<-1.em,.em>^{\iota}
\ar "e"+<.em,-.8em>;"f"+<.em,.8em>^{/\bar \Gamma (X, u)}
\ar "f"+<2.7em,.em>;"c"+<-3.0em,.em>^{\sim }
\end{xy}
\]
\end{corollary}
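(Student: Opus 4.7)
The plan is to deduce both assertions from the preceding theorem, which identifies ${\rm Stab}(\Delta)$ with ${\rm Aff}^+(X,u)$ and describes the action of each $h \in {\rm Aff}^+(X,u)$ on $\Delta \cong \mathbb{H}$ as the M\"obius transformation $\bar A = RAR^{-1}$ with $A = D(h)$. Granted this, the two claims reduce to a discreteness argument for $\Gamma(X,u)$ together with routine diagram-chasing.

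To see that $\Gamma(X,u)$ is Fuchsian, i.e.\ discrete in ${\rm PSL}(2,\mathbb{R})$, I would argue as follows. The modular group ${\rm Mod}(X)$ acts properly discontinuously on $T(X)$ (recalled in Section~\ref{Preliminaries}). Since $\Delta = \iota(\mathbb{H})$ is closed in $T(X)$ and ${\rm Stab}(\Delta)$ is the subgroup of ${\rm Mod}(X)$ preserving it, the induced action of ${\rm Stab}(\Delta)$ on $\Delta$ is again properly discontinuous. Transporting this action along the holomorphic isometric bijection $\iota : \mathbb{H} \to \Delta$ and invoking the preceding theorem, the action on $\mathbb{H}$ becomes the M\"obius action of $\bar\Gamma(X,u) = R\Gamma(X,u)R^{-1}$. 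A subgroup of ${\rm PSL}(2,\mathbb{R})$ acts properly discontinuously on $\mathbb{H}$ if and only if it is discrete, so $\bar\Gamma(X,u)$, and hence $\Gamma(X,u)$, is Fuchsian.

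For the commutative diagram, the top square is immediate from the definitions: the horizontal arrows are the inclusion $\Delta \hookrightarrow T(X)$ and its induced map on quotients, while the vertical arrows are the quotients by ${\rm Stab}(\Delta)$ and ${\rm Mod}(X)$ respectively. The induced bottom horizontal map is injective because any two points of $\Delta$ lying in the same ${\rm Mod}(X)$-orbit are related by an element of ${\rm Mod}(X)$ that sends $\Delta$ to $\Delta$, i.e.\ by an element of ${\rm Stab}(\Delta)$. On the left, $\iota$ is a biholomorphic isometry by Proposition~\ref{embtohqd} and the results cited from \cite{HerSch07}, and by the preceding theorem it intertwines the M\"obius action of $\bar\Gamma(X,u)$ on $\mathbb{H}$ with the action of ${\rm Stab}(\Delta)$ on $\Delta$. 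It therefore descends to a biholomorphism $\mathbb{H}/\bar\Gamma(X,u) \xrightarrow{\sim} \Delta/{\rm Stab}(\Delta)$, which is the bottom arrow of the diagram.

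The main point requiring care is verifying that the restriction to $\Delta$ of the properly discontinuous ${\rm Mod}(X)$-action on $T(X)$ remains properly discontinuous as an action of ${\rm Stab}(\Delta)$ on $\Delta$, so as to conclude discreteness of $\bar\Gamma(X,u)$; once this is established, the rest of the corollary is formal from the preceding theorem.
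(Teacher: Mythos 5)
The paper gives no proof of this corollary at all---it is imported from \cite{Veech89}, \cite{EarGar97} and \cite{HerSch07}---so there is nothing of the author's to compare your route against; I can only judge the argument on its own terms. Your discreteness argument is correct and is the standard one: compact subsets of $\Delta$ are compact in $T(X)$, so the properly discontinuous action of ${\rm Mod}(X)$ on $T(X)$ restricts to a properly discontinuous action of ${\rm Stab}(\Delta)$ on $\Delta$, and since the surjection ${\rm Stab}(\Delta)={\rm Aff}^+(X,u)\rightarrow\bar\Gamma(X,u)$ intertwines this, via $\iota$, with the M\"obius action on $\mathbb{H}$ (its kernel ${\rm Ker}(D)$ acts trivially, which is harmless), the group $\bar\Gamma(X,u)$ acts properly discontinuously on $\mathbb{H}$ and is therefore discrete. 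The commutativity of the two squares and the isomorphism $\mathbb{H}/\bar\Gamma(X,u)\xrightarrow{\sim}\Delta/{\rm Stab}(\Delta)$ are likewise formal consequences of the preceding theorem, as you say.

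The gap is the injectivity of the bottom arrow $\Delta/{\rm Stab}(\Delta)\rightarrow M(g,n)$. Your justification---``any two points of $\Delta$ lying in the same ${\rm Mod}(X)$-orbit are related by an element of ${\rm Mod}(X)$ that sends $\Delta$ to $\Delta$''---is precisely the statement to be proved, and you give no reason for it. If $g_*\iota(s)=\iota(t)$ for some $g\in{\rm Mod}(X)$, then $g_*\Delta$ is \emph{a} Teichm\"uller disk through $\iota(t)$; but a point of $T(X)$ lies on many Teichm\"uller disks, one for each ray of integrable holomorphic quadratic differentials on the underlying surface, so there is no a priori reason that $g_*\Delta=\Delta$, i.e.\ that $g\in{\rm Stab}(\Delta)$. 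Unwinding it, injectivity amounts to the claim that whenever two surfaces in the ${\rm SL}(2,\mathbb{R})$-orbit of $(X,u)$ are conformally equivalent, some conformal equivalence carries one flat structure to (a rotation of) the other; this is the one non-formal ingredient of the corollary and is where the cited references do actual work. Indeed, in general the projection of a Teichm\"uller disk to moduli space is only an immersion which may have self-crossings, so this point cannot be dispatched by diagram-chasing. Everything else in your proposal is sound.
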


We construct holomorphic families of Riemann surfaces from such holomorphic embeddings $\mathbb{H}/\bar \Gamma (X, u) \rightarrow M(g, n)$.

Next, we see an application of Veech groups for a dynamical behavior of the geodesic flows on flat surfaces $(X, u)$. 
We assume that the conformal structure defined by $u$ is compatible with that of $X$. 
Let $C$ be the critical point set of $(X, u)$.
Since the transition functions of $(X, u)$ are the from $z \mapsto \pm z+c$,  geodesics on $(X, u)$ and directions of  geodesics of $(X, u)$ are well-defined. 
 That is, a geodesic $l : \mathbb{R} \rightarrow X$ on $(X, u)$ is a curve in $X^\prime=X-C$ such that 
 for all $t \in \mathbb{R}$, sufficiently small $\epsilon>0$, and $(U, z) \in u$ with $l((t-\epsilon , t+\epsilon )) \subset  U$, $z \circ l |_{(t-\epsilon , t+\epsilon )}$ is an Euclidean line segment.
 The direction $\theta(l)\in [0, \pi)$ of a geodesic $l$ is the direction of $z \circ l |_{(t-\epsilon , t+\epsilon )}$ on $\mathbb{C}$.
 A saddle connection is a path on $X$ which connects two points in $C$ and is locally an Euclidean line segment with respect to $u$.
  \begin{definition}[Jenkins-Strebel direction] 
  A number $\theta \in [0, \pi)$ is called a Jenkins-Strebel direction of $(X, u)$ if all points in $X$ lie on $\theta$-direction closed geodesics or  $\theta$-direction  saddle connections of $(X, u)$.
\end{definition}
If $\theta$ is a Jenkins-Strebel direction of $(X, u)$, the flat surface $(X, u)$ is decomposed into finitely many Euclidean cylinders by cutting along  the $\theta$-direction  saddle connections.
\begin{definition}[Simple Jenkins-Strebel direction] 
If Jenkins-Strebel direction $\theta$ decomposes $(X, u)$ into only one cylinder, we call the direction $\theta$ a simple Jenkins-Strebel direction of $(X, u)$.
\end{definition}
\begin{definition}
For an Euclidean cylinder $R$ whose length of circumference is $b$ and height is $a$,
the ratio $\frac{b}{a}$ is called the modulus of $R$ and denote it by ${\rm mod}(R)$.
\end{definition}
Veech\cite{Veech89} proved the following theorem which is called the Veech dichotomy theorem.
For the definition of the ergodicity, see \cite{KatHas96}.
\begin{theorem}
\label{dichotomy}
Let $(X, u)$ be a flat surface.
Suppose that the Veech group  $\Gamma(X,u)$ is a lattice in ${\rm PSL}(2, \mathbb{R})$, that is, $\mathbb{H}/\Gamma(X,u)$ has finite hyperbolic area.
Then for each direction $\theta$, one of the following two possibilities occurs:
\begin{itemize}
\item The direction $\theta$ is a Jenkins-Strebel direction.
Moreover, if $\theta$ decomposes $X$ into $k$ cylinders $R_1, \cdots, R_k$, the ratio ${\rm mod}(R_i)/{\rm mod}(R_j)$ is a rational number for each $i,j \in \{1, \cdots, k\}$. 
\item Every $\theta$-direction geodesic is dense in $X$.
And the $\theta$-direction geodesic flow is uniquely ergodic. 
\end{itemize}
\end{theorem}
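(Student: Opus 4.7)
The plan is to study the Teichm\"uller geodesic ray in direction $\theta$ after projecting from the Teichm\"uller disk $\Delta$ to the finite-area quotient $\mathbb{H}/\bar \Gamma(X, u)$, and to extract the dichotomy from the cusp-or-return behavior of geodesics in finite-area hyperbolic orbifolds. First I would reduce to $\theta = 0$ by rotating the flat structure; this replacement preserves the Veech group up to conjugation (hence the lattice hypothesis) and the statement to prove. The Teichm\"uller ray $t \mapsto \iota(ie^{2t})$ is then the deformation of $(X, u)$ by $\mathrm{diag}(e^t, e^{-t})$, and it descends to a hyperbolic geodesic ray $\gamma$ in the finite-area orbifold $\mathbb{H}/\bar \Gamma(X, u)$. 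The standard dichotomy for geodesic rays in a finite-area hyperbolic orbifold yields two mutually exclusive possibilities: either (a) $\gamma$ is eventually contained in a cusp neighborhood, or (b) $\gamma$ returns infinitely often to some fixed compact set.

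In case (a), the end of $\gamma$ is asymptotic to a cusp of $\mathbb{H}/\bar \Gamma(X, u)$, which supplies a parabolic element $P \in \Gamma(X, u)$ whose fixed point on $\partial \mathbb{H}$ corresponds to the horizontal direction. Lifting $P$ to an affine homeomorphism $h \in \mathrm{Aff}^+(X, u)$ with parabolic derivative, I would apply the Thurston--Veech classification: a parabolic affine automorphism of a flat surface must preserve a cylinder decomposition in its fixed direction and act on each cylinder as a power of the Dehn twist. This forces the horizontal direction to decompose $(X, u)$ into cylinders $R_1, \ldots, R_k$, so $\theta$ is a Jenkins-Strebel direction. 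For the rationality of moduli ratios, the shear of $h$ in the fixed direction must equal $n_i$ times the shear of a single Dehn twist in $R_i$ for some positive integer $n_i$, and the latter is determined by $\mathrm{mod}(R_i)$; equating the total shear over $i$ yields $\mathrm{mod}(R_i)/\mathrm{mod}(R_j) = n_j/n_i \in \mathbb{Q}$.

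In case (b), I would invoke Masur's criterion: a Teichm\"uller geodesic whose projection to moduli space does not diverge corresponds to a uniquely ergodic horizontal measured foliation on $(X, u)$. A horizontal saddle connection would, together with the lattice property (via Smillie--Weiss or directly via an appropriate parabolic in $\Gamma(X, u)$), force a cylinder decomposition in the horizontal direction and return us to case (a); so in case (b) there are no horizontal saddle connections, whence the horizontal flow is minimal, and unique ergodicity then guarantees that every horizontal geodesic is dense and the flow is uniquely ergodic. The main obstacles are the Thurston--Veech classification of parabolic affine automorphisms in case (a) (producing both the cylinder decomposition and the quantization of shears that gives the rationality of moduli ratios) and the application of Masur's criterion in case (b); both are substantial external inputs that I would use as black boxes.
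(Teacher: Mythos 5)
The paper does not prove this statement: it is quoted as the Veech dichotomy theorem with a citation to \cite{Veech89}, so there is no in-paper argument to compare yours against. Judged on its own, your outline is the standard (and correct) modern proof: project the Teichm\"uller geodesic in direction $\theta$ to the finite-area orbifold $\mathbb{H}/\bar\Gamma(X,u)$, split into the divergent (cusp-bound) and recurrent cases, use the resulting parabolic element plus the Veech--Thurston description of parabolic affine automorphisms to get the cylinder decomposition and the commensurability of moduli in the first case, and use Masur's criterion (together with the observation that a saddle connection in a non-periodic direction would force divergence) in the second. The two black boxes you name are genuine theorems and carry essentially all the weight, which is consistent with the paper's own decision to cite the result rather than prove it; note only that Masur's criterion postdates \cite{Veech89}, so this is a streamlined later argument rather than Veech's original one. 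One bookkeeping point to fix in a write-up: with the paper's identification ${\rm SO}(2)A\mapsto -\overline{A^{-1}(i)}$, the ray obtained from ${\rm diag}(e^t,e^{-t})$ tends to $0\in\partial\mathbb{H}$, whose parabolic stabilizer has \emph{vertical} eigendirection, and Masur's criterion along that ray governs the \emph{vertical} foliation; so after normalizing $\theta=0$ you should follow the ray toward $\infty$ (equivalently use ${\rm diag}(e^{-t},e^{t})$), or else normalize to $\theta=\pi/2$. This is a sign convention, not a gap in the argument.
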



Finally, we see two examples of Veech groups.
\begin{example}[\cite{Shinomiya12}]\label{example1}
Let $X$ be a surface constructed as figure \ref{Veechexam1-1}.
We induce an unique conformal structure on $X$ such that the quadratic differential $dz^2$ on the interior of the rectangle of figure \ref{Veechexam1-1} extends to a holomorphic quadratic differential $q$ on $X$. 
Then $X$ is a Riemann surface of type $(2,0)$ and vertices of four squares become two points in $X$. 
These points are zeros of $q$ of order 2.
Let $u=u_q$ be the flat structure defined by $q$. 
Then $\theta=0$ and $\frac{\pi}{2}$ are Jenkins-Strebel directions of $(X,u)$.
The direction $\theta=\frac{\pi}{2}$ decomposes $(X, u)$ into two cylinders.
And the direction $\theta=0$ decomposes $(X, u)$ into only one cylinder.
Therefore, $\theta=0$ is a simple Jenkins-Strebel direction.
The two Jenkis-Strebel directions $0, \frac{\pi}{2}$ of $(X,u)$ imply that  
{  \tiny $\left(
  \begin{array}{cc}
    1 & 1 \\
    0 & 1 
  \end{array}
  \right)$}
 and 
 {  \tiny $\left(
  \begin{array}{cccc}
    1 & 0 \\
    2 & 1
  \end{array}
  \right)$}
define elements in ${\rm Aff}^+(X, u)$ as figure \ref{Veechexam1-2}.
Hence, 
$\Gamma=\left< {\tiny  \left[\left(
  \begin{array}{cccc}
    1 & 1 \\
    0 & 1 
  \end{array}
  \right)
  \right]}, 
{ \tiny  \left[\left(
  \begin{array}{cccc}
    1 & 0 \\
    2 & 1
  \end{array}
  \right)
  \right] }
\right>$ is a subgroup of $\Gamma(X, u)$. 

Since every element in ${\rm Aff}^+(X, u)$ must preserve the set of all critical points, $\Gamma(X, u)$ is a subgroup of ${\rm PSL}(2,\mathbb{Z})$. 
It is known that $ \tiny\left<
{  \left[ \left(\begin{array}{cccc}
    1 & 2 \\
    0 & 1 
  \end{array}
 \right) \right] }, 
 { 
  \left[\left(
  \begin{array}{cccc}
    1 & 0 \\
    2 & 1
  \end{array} \right)
  \right]}
\right>$ is the congruence subgroup of level $2$ and has index $6$ in ${\rm PSL}(2,\mathbb{Z})$. 
Hence, $\Gamma(X, u)$ is either $\Gamma$ or ${\rm PSL}(2,\mathbb{Z})$. 
However, 
  ${ \left[ \tiny 
\left(
  \begin{array}{cccc}
    1 & 0 \\
    1 & 1 
  \end{array}
  \right) \right]}$ 
cannot be an element in  $\Gamma(X, u)$. Therefore $\Gamma(X, u)$ must be $\Gamma$.
It is easy to see that $\mathbb{H}/\bar \Gamma(X, u)$ is an orbifold of genus $0$ has $2$ punctures and $1$ cone point of order $2$.
Thus, $\Gamma(X, u)$ is a lattice in ${\rm PSL}(2, \mathbb{R})$ and the flat surface $(X, u)$ satisfies the Veech dichotomy.

\begin{figure}[h]
 \begin{center}
  \includegraphics[keepaspectratio, scale=1]{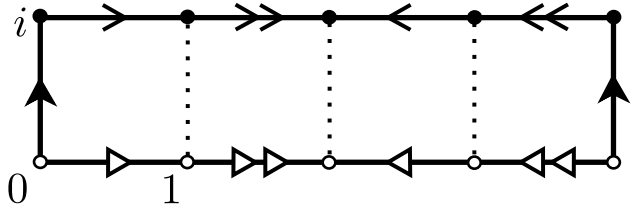}
\caption{The flat surface $(X,u)$}
\label{Veechexam1-1}
\end{center}
\end{figure}

 \begin{figure}[h]
 \begin{center}
  \includegraphics*[keepaspectratio, scale=1]{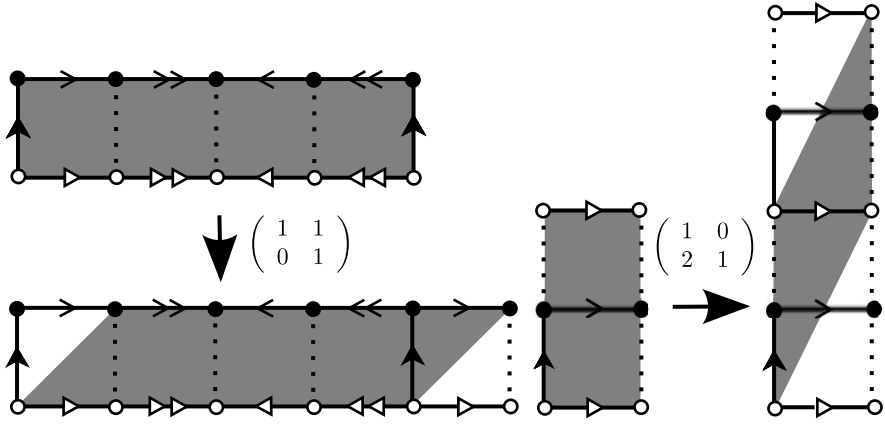}
\caption{Actions of two matrices onto $X$}
\label{Veechexam1-2}
\end{center}
\end{figure}
\end{example}

\begin{example}[\cite{EarGar97}]\label{example2}
Fix $n\geq 2$ and let $\Pi_n$ be a regular $4n$-gon.
We assume that $\Pi_n$ has two horizontal sides, lengths of the sides are 1. 
We identify each side of  $\Pi_n$ with the opposite parallel side by an Euclidean translation (see figure \ref{Veechexam2-1}) and denote the resulting surface by $P_n$.
We induce an unique conformal structure on $P_n$ such that the quadratic differential $dz^2$ on the interior of $\Pi_n$ extends to a holomorphic quadratic differential $q_n$ on $P_n$. 
Then $P_n$ is a Riemann surface of type $(\frac{n}{2}, 0)$. 
And, the vertices of $\Pi_n$ become an unique zero of $q_n$ of order $4n-4$.
Let $u_{q_n}$ be the flat structure defined by $q_n$. 
Now,
$R_n={ \tiny 
\left(
  \begin{array}{cccc}
    \cos \pi/2n  & -\sin \pi/2n \\
    \sin \pi/2n & \cos \pi/2n 
\end{array}
  \right)
}$
and 
$T_n={ \tiny
\left(
  \begin{array}{cccc}
    1 & 2\cot \pi/4n \\
    0 & 1
\end{array}
  \right)}$ 
induce elements in ${\rm Aff}^+(P_n, u_{q_n} )$. 
The action of $R_n$ on $P_n$ is the rotation about the center of $\Pi_n$ of angle $\frac{\pi}{2n}$. 
To see the action of $T_n$ on $P_n$, we cut $P_n$ along all horizontal saddle connections. 
Then $P_n$ is decomposed into $\frac{n}{2}$ cylinders and 
the action of $T_n$ is the composition of the square of the right Dehn twist along a core curve of the cylinder which contains the center of $\Pi_n$ and the right Dehn twists along core curves of the other cylinders. 
Thus, $\Gamma=\left<[R_n],[T_n]\right>$ is a subgroup of the Veech group $\Gamma(P_n, u_{q_n})$. 
It is easy to see that $\Gamma$ is a triangle group of type $(2n, \infty, \infty)$.
Since only discrete group that contains $\Gamma$ is a triangle group of type $(2, 4n, \infty)$  (see \cite{EarGar97} and \cite{Singerman72})
and this cannot be $\Gamma(P_n,u_{q_n})$, we have $\Gamma(P_n, u_{q_n})=\left<[R_n],[T_n]\right>$. 
And, $\Gamma(P_n,u_{q_n})$ is a lattice in ${\rm PSL}(2, \mathbb{R})$ and the flat surface $(P_n,u_{q_n})$ satisfies the Veech dichotomy.
\begin{figure}[h]
 \begin{center}
 \includegraphics*[keepaspectratio, scale=1]{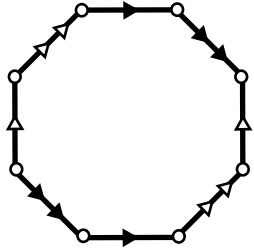}
\caption{The flat surface $(P_8, u_{q_8})$}
\label{Veechexam2-1}
 \end{center}
 \end{figure}
\end{example}

\section{Construction of Veech holomorphic families of Riemann surfaces}\label{vhf}
In this section, we construct  holomorphic families of Riemann surfaces from flat surfaces and the Veech groups. 
We call those holomorphic families {\it Veech holomorphic families of Riemann surfaces}.
And we observe representations and monodromies  of Veech holomorphic families of Riemann surfaces.

We assume that $X$ is  a Riemann surface of type $(g, n)$ with $3g-3+n>0$.
Let $(X,u)$ be a flat surface with critical points on $C$ and assume that this flat structure is compatible with the complex structure of $X$.
Let $q$ be the corresponding holomorphic quadratic differential on $X$.
In section \ref{Veech}, we constructed a holomorphic isometric embedding $\iota: \mathbb{H} \ni t \mapsto [X_t,id] \in T(X)$ from $(X, u)$.
The Beltrami coefficient of the Teichm\"uller map from $\iota(i)=[X, id]$ to $\iota(t)$ is $\mu_t=\frac{i-t}{i+t}\frac{|q|}{q}$. 
Let $\rho: \mathbb{H} \rightarrow X$ be the universal covering map with the covering transformation group  $G < {\rm PSL}(2,\mathbb{R})$. 
For each $t \in \mathbb{H}$, we denote by $\tilde \mu_t$ 
 the Beltrami coefficient on $\hat {\mathbb{C}}$ which is the lift of $\mu_t$ in $\mathbb{H}$ and equals to  0 in $\hat {\mathbb{C}}-\mathbb{H}$.
Let $w^{\tilde \mu_t} : \hat{\mathbb{C}} \rightarrow \hat{\mathbb{C}}$ be an unique $\tilde \mu_t$-quasiconformal map which fixes $0, 1, \infty$. 
We set 
\begin{center}
$\widetilde{M}=\{(t,z): t \in \mathbb{H},\ z \in w^{\tilde \mu_t}(\mathbb{H})\}$.
\end{center}
By identifying $\mathbb{H}$ with $\iota(\mathbb{H})$, $\widetilde{M}$ is a two-dimensional complex manifold as a submanifold of the Bers fiber space $F(G)$.
The covering transformation group  $G$ acts on $\widetilde M$ holomorphically as $\sigma_*(t,z)=(t, w^{\tilde \mu_t}\sigma (w^{\tilde \mu_t})^{-1}(z))$ for all $\sigma \in G$ and $(t, z) \in \widetilde M$.
Set $\widehat{M}=\widetilde M/ G$.
Then we can write $\widehat{M}$ as
\begin{center}
$\widehat{M}=\{(t,z): t \in \mathbb{H},\ z \in X_t=f_t(X)\}$.
\end{center}
Here, $f_t : X \rightarrow X_t=f_t(X)$ is the induced quasiconformal map of $w^{\tilde \mu_t}|_{\mathbb{H}}$.
Of course, $X_t$ coincides with $\iota(t)$ and $f_t$ is a Teichm\"uller map with the Beltrami coefficient $\mu_t$. 

Recall that ${\rm Aff}^+(X,u)$ is the group of all affine homeomorphisms from $(X,u)$ onto itself which fixes $C$ and the homomorphism $D: {\rm Aff}^+(X,u) \rightarrow {\rm PSL}(2,\mathbb{R})$ is the derivative map.
The affine group ${\rm Aff}^+(X,u)$ acts on $\widehat{M}$ holomorphically as
\begin{center}
$h_*(t,z)= (\bar A (t), f_{\bar A(t)}h(f_t)^{-1}(z))$
\end{center}
for all $h \in {\rm Aff}^+(X,u)$ and $(t,z) \in \widehat{M}$.
Here, $A=D(h)$, $
R=\tiny{\left(
  \begin{array}{cccc}
   -1 & 0  \\
    0 & 1
  \end{array}
  \right)}$ and
$\bar A=RAR^{-1}$ acts on $\mathbb{H}$ as a M\"obius transformation.
We set $N=\widehat{M}/{\rm Aff}^+(X,u)$ and $\bar B=\mathbb{H} /\bar \Gamma (X,u)$.
Let $\pi^\prime :N \rightarrow \bar B$ be the projection, $\mathbb{H}^*$ be $\mathbb{H}$ with all elliptic fixed points of $\bar \Gamma(X,u)$ are removed, $B= \mathbb{H}^*/\bar \Gamma(X,u)$, and $N^*=(\pi^\prime)^{-1}(B)$. 
It is easy to see that $N$ is homeomorphic to the product space $B \times \left(X/{\rm Ker}(D)\right)$.
There exists a branched covering map $M \rightarrow N$ which corresponds to the branched covering map
$B \times X \rightarrow B \times \left(X/{\rm Ker}(D)\right)$.
Now, we obtain the holomorphic family $(M, \pi, B)$ of Riemann surfaces of type $(g, n)$ over $B$.
Here, $\pi: M \rightarrow B$ is the projection.  
\begin{definition}[Veech holomorphic families]
We call the holomorphic families of Riemann surfaces $(M, \pi, B)$ which are constructed as above Veech holomorphic families of Riemann surfaces of type $(g, n)$ over $B$.
\end{definition}
\begin{remark}
The triple $(N^*, \pi^\prime, B)$ is also a holomorphic family of Riemann surfaces. 
And, $(M, \pi, B)$ and $(N^*, \pi^\prime, B)$ are locally non-trivial holomorphic families of Riemann surfaces.
\end{remark}
We observe a representation of a Veech holomorphic family $(M, \pi, B)$ of Riemann surfaces and the monodromy.
First, we have the holomorphic isometric embedding $\iota : \mathbb{H} \rightarrow T(g, n)$.
Let $\bar \rho: \mathbb{H} \rightarrow \bar B$ and $\rho : \mathbb{H}\rightarrow B$ be the universal covering maps.
Since $\mathbb{H}^*$ is a covering space of $B$, there exists a covering map $\rho_0: \mathbb{H} \rightarrow \mathbb{H}^*$ which satisfies $\rho=\bar \rho \circ  \rho_0$.
Then, the map $\Phi = \iota \circ \rho_0$ is a representation of $(M, \pi, B)$.
Since $B=\mathbb{H}^*/\bar \Gamma(X, u)$, we can identify $\bar \Gamma(X, u )$ with $\pi_1(B)/\pi_1(\mathbb{H}^*)$.
And there exists a natural map $\bar \Gamma(X, u) \rightarrow \Gamma(X, u)$.
The monodromy $\chi : \pi_1(B) \rightarrow {\rm Aff}^+(X, u)$ of the representation $\Phi$ is a lift of this natural map by the maps $\pi_1(B) \rightarrow \pi_1(B)/\pi_1(\mathbb{H}^*)$ and $D: {\rm Aff}^+(X, u) \rightarrow \Gamma(X, u)$.  
\[
\begin{xy}
(0,0)="a"*{\mathbb{H}},
"a"+/d4em/+/r3.em/="b"*{\mathbb{H}^*},
"a"+/d8em/="c"*{B},
"b"+/r6em /="d"*{T(X)},

\ar "a"+<.5em,-.6em>;"b"+<-.2em,.6em>^{\rho_0}
\ar "b"+<-.5em,-.6em>;"c"+<.3em,.6em>^{\bar \rho |_{\mathbb{H}^*}}
\ar "a"+<-.em,-.6em>;"c"+<-.em,.6em>_{\rho}
\ar "b"+<.8em,.em>;"d"+<-1.5em,.em>^{\iota}
\ar "a"+<.6em,.em>;"d"+<-.8em,.6em>^{\Phi}

\end{xy}
\]

\[
\begin{xy}
(0,-5)="a"*{\pi_1(B)},
"a"+/r11em/="b"*{{\rm Aff}^+(X, u)},
"a"+/d5em/="c"*{\pi_1(B)/\pi_1(\mathbb{H}^*)},
"c"+/r11em/="d"*{\Gamma(X, u)},
"c"+/l5.5em/="e"*{\bar \Gamma (X, u)=},
\ar "a"+<1.7em,-.em>;"b"+<-3.em,.em>^{\chi}
\ar "a"+<.em,-.8em>;"c"+<.em,.8em>
\ar "b"+<.em,-.8em>;"d"+<-.em,.8em>^D
\ar "c"+<3.5em,.em>;"d"+<-2.em,.em>
\end{xy}
\]

 \section{Holomorphic sections of Veech holomorphic families of Riemann surfaces.}\label{section}
 In this section, we characterize holomorphic sections of holomorphic families of Riemann surfaces by certain points of a corresponding flat surface.
 We use the same notations as in section \ref{vhf}.
\begin{definition}[Holomorphic sections]
Let $(M, \pi, B)$ be a holomorphic family of Riemann surfaces.
A holomorphic map $s: B \rightarrow M$ is called a holomorphic section of $(M, \pi, B)$ or a holomorphic section of $\pi$ if it satisfies $\pi \circ s =id$.
\end{definition}
Let $(M, \pi, B)$ be a Veech holomorphic family of Riemann surfaces  defined by a holomorphic quadratic differential $q$ or the corresponding flat surface $(X,u)$.
In section \ref{vhf}, we define the action of ${\rm Aff}^+(X,u)$ onto $\widehat{M}$.
We set $\widehat{N}=\widehat{M}/{\rm Ker}(D)$ and let $\hat{\pi}: \widehat{N} \rightarrow \mathbb{H}$ be the projection.
We denote each point of $\widehat{N}$ by $(t, w)$. 
Here, $t \in \mathbb{H}$ is the image of the point by the map $\hat{\pi}$ into $\mathbb{H}$  and $w$ is a point of the Riemann surface $\hat{\pi}^{-1}(t)=Y_t$.
 That is, we denote $\widehat{N}$ by
 \begin{center}
 $\widehat{N}=\{(t, w): t \in \mathbb{H},\ w \in Y_t\}$.
\end{center}
 Note that if we set $q^\prime$ to be a holomorphic quadratic differential on $Y=X/{\rm Ker}(D)$ induced by $q$, then there exists  the Teichm\"uller map $g_t: Y \rightarrow Y_t$ whose Beltrami coefficient is $\frac{i-t}{i+t}\frac{|q^\prime|}{q^\prime}$ for each $t \in \mathbb{H}$.
 
Let $s: B \rightarrow M$ be a holomorphic section of $(M, \pi, B)$.
We project this holomorphic section to a holomorphic section $s^\prime: B \rightarrow N^*$ of $(N^*, \pi^\prime, B)$.
Where, $(N^*, \pi^\prime, B)$ is a holomorphic family of Riemann surfaces constructed in section \ref{vhf}.
We lift $s^\prime$ to  a holomorphic section $\hat{s}: \mathbb{H}^* \rightarrow \widehat{N}^*=\hat{\pi}^{-1}(\mathbb{H}^*) \subset \widehat{N}$ of $\hat{\pi}|_{\widehat{N}^*}$.

\[
\begin{xy}
(0,0)="a"*{\widehat{N} \supset \widehat{N}^*},
"a"+/r6em/="b"*{N^*},
"a"+/d3.5em/="c"*{\mathbb{H}\supset \mathbb{H}^*},
"b"+/d3.5em/="d"*{B},
"b"+/u3.5em/="e"*{M},
\ar "a"+<1.em,-.7em>;"c"+<1.em,.7em>
\ar "a"+<-1.3em,-.7em>;"c"+<-1.3em,.7em>^{\hat{\pi}}
\ar "a"+<1.8em,-.2em>;"b"+<-1.em,-.2em>
\ar "c"+<1.8em,.em>;"d"+<-1.em,.em>
\ar "b"+<.em,-.7em>;"d"+<.em,.7em>_{\pi^\prime}
\ar "e"+<.em,-.8em>;"b"+<.em,.8em>
\ar@/^-1em/ "d"+<.8em,.5em>;"e"+<.8em,-.5em>_s
\ar@/^-.5em/ "c"+<1.6em,.5em>;"a"+<1.6em,-.5em>^{\hat s}
\ar@/^-.3em/ "d"+<.5em,.5em>;"b"+<.5em,-.5em>
\end{xy}
\]

\begin{proposition}\label{proper}
The complex manifold $\widehat{N}$ is a proper Kobayashi hyperbolic manifold.
\end{proposition}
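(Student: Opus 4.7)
The plan is to realize $\widehat{N}$ as a sequence of holomorphic quotients of a closed submanifold of $T(\dot{X})$, and to transport proper Kobayashi hyperbolicity through each step using Theorem \ref{Royden} and Proposition \ref{Kobayashi}.

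First, I would observe that $\widetilde{M}$ is precisely the restriction of the Bers fiber space $F(G)$ over the Teichm\"uller disk $\iota(\mathbb{H}) \subset T(X)$. Because $\iota$ is a holomorphic isometric embedding of a complete metric space, $\iota(\mathbb{H})$ is a closed complex submanifold of $T(X)$, and hence $\widetilde{M}$ is a closed complex submanifold of $F(G) \cong T(\dot{X})$ by Theorem \ref{Bers}. Since $T(\dot{X})$ is proper Kobayashi hyperbolic by Theorem \ref{Royden} together with completeness of the Teichm\"uller metric, and since distance decreasing under the inclusion gives $d^K_{T(\dot{X})}(z,w) \leq d^K_{\widetilde{M}}(z,w)$, it follows that $\widetilde{M}$ itself is proper Kobayashi hyperbolic: Kobayashi hyperbolicity descends from the ambient space, and closed $d^K_{\widetilde{M}}$-balls are closed subsets of compact $d^K_{T(\dot{X})}$-balls inside the closed set $\widetilde{M}$.

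Next, the covering transformation group $G$ acts holomorphically and properly discontinuously on $\widetilde{M}$ with quotient $\widehat{M}$. Applying Proposition \ref{Kobayashi}, I would compute $d^K_{\widehat{M}}(z,w)$ as the infimum of $d^K_{\widetilde{M}}(\tilde{z}, \tilde{w})$ over all lifts $\tilde{w}$ of $w$. Because the orbit $G \cdot \tilde{w}$ is closed and discrete in the proper space $\widetilde{M}$, this infimum is strictly positive whenever $z \neq w$, giving Kobayashi hyperbolicity of $\widehat{M}$. To obtain properness, I would show that the closed ball $B_{\widehat{M}}(z,r)$ equals $\pi(B_{\widetilde{M}}(\tilde{z}, r))$ for each $r>0$: one containment is immediate from distance decreasing, while the reverse uses compactness of $B_{\widetilde{M}}(\tilde{z}, r+1)$ and continuity of $d^K_{\widetilde{M}}$ to conclude that the infimum in Proposition \ref{Kobayashi} is actually attained by some lift. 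Thus $\widehat{M}$ is proper Kobayashi hyperbolic.

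Finally, the same argument applies verbatim to the covering $\widehat{M} \to \widehat{N} = \widehat{M}/\operatorname{Ker}(D)$. The group $\operatorname{Ker}(D)$ consists of affine self-homeomorphisms of $(X,u)$ whose derivative is $\pm I$, equivalently Euclidean isometries of the flat surface $(X,u)$ that permute the finite critical set, and such an isometry group is finite; in particular it acts properly discontinuously on $\widehat{M}$ by biholomorphisms. I expect the main technical point to be the verification that proper Kobayashi hyperbolicity descends through a properly discontinuous holomorphic group action. This essentially reduces to the identity $B_{\widehat{M}}(z,r) = \pi(B_{\widetilde{M}}(\tilde{z}, r))$ used above, which in turn relies on compactness of slightly enlarged closed balls in the total space together with the proper discontinuity of the action.
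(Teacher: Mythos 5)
Your first two steps are sound and essentially parallel the paper's strategy: realize a covering of the family as a closed submanifold of a Bers fiber space, import proper Kobayashi hyperbolicity from $T(\dot X)$ via Theorems \ref{Royden} and \ref{Bers}, and descend through the covering $\widetilde M\to\widehat M=\widetilde M/G$ using Proposition \ref{Kobayashi}. Since $G$ is torsion-free this quotient is an honest (unbranched) covering, so your orbit-discreteness argument for hyperbolicity and your identity between closed balls downstairs and images of closed balls upstairs both go through there.

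The gap is in the final step. The map $\widehat M\to\widehat N=\widehat M/{\rm Ker}(D)$ is in general a \emph{branched} covering, not a covering map: elements of ${\rm Ker}(D)$ are flat isometries of $(X,u)$ with derivative $\pm I$ and typically have fixed points (in both of the paper's examples ${\rm Ker}(D)$ contains $z\mapsto -z$), and each fixed point $p$ of $h$ yields the fixed curve $\{(t,f_t(p)) : t\in\mathbb{H}\}$ of the induced action on $\widehat M$. Proposition \ref{Kobayashi} is stated and proved only for covering maps; its proof lifts chains of holomorphic disks, and a disk meeting the branch locus need not lift (in the local model $(t,z)\mapsto(t,z^n)$ a lift requires an $n$-th root of one component of the disk, which exists only when the vanishing orders are divisible by $n$). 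So ``the same argument applies verbatim'' is not justified: both the strict positivity of the infimum over ${\rm Ker}(D)$-orbits and the equality of the closed ball in $\widehat N$ with the image of a closed ball in $\widehat M$ rest on a formula you have not established for branched quotients. The paper sidesteps this entirely by taking $\widetilde N$ to be the universal covering of $\widehat N$ itself --- an unbranched covering --- and identifying $\widetilde N$ as a closed submanifold of the Bers fiber space $F(G^\prime)$ of the quotient surface $Y=X/{\rm Ker}(D)$, so that Proposition \ref{Kobayashi} is applied only once, to the genuine covering $\widetilde N\to\widehat N$. To repair your route you would need a separate argument that proper Kobayashi hyperbolicity descends through finite branched quotients, or else switch to the paper's choice of covering.
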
 
\begin{proof} 
Let $\widetilde{N}$ be the universal covering of $\widehat{N}$.
First, we see that $\widetilde{N}$ is a proper Kobayashi hyperbolic manifold.
Recall that $q^\prime$ is the holomorphic quadratic differential on  $Y=X/{\rm Ker}(D) $ which is the projection of the holomorphic quadratic differential $q$ on $X$.
By setting $\iota^\prime :\mathbb{H} \rightarrow T(Y)$ to be the holomorphic isometric embedding defined by $q^\prime$.
Then we can write $\widetilde{N}$ as
\begin{center}
$\widetilde{N}=\{(\tilde t, \tilde w): \tilde t \in \mathbb{H},\ \tilde w \in \mathbb{H}^{\tilde t}\}$
\end{center} 
by identifying $\mathbb{H}$ as $\iota^\prime(\mathbb{H})$.
And $\mathbb{H}^{\tilde t}$ is defined in subsection \ref{bfs}.
Therefore, $\widetilde{N}$ is a closed submanifold of the Bers fiber space $F(G^\prime)$.
Here, $G^\prime$ is a Fuchsian group with $Y=\mathbb{H}/G^\prime$.
Since $F(G^\prime)$ is proper Kobayashi hyperbolic manifold by theorem \ref{Royden} and theorem \ref{Bers}, $\widetilde{N}$ is also a proper Kobayashi hyperbolic manifold.

Next, we show that $\widehat{N}$ is a Kobayashi hyperbolic manifold. 
We suppose that $d_{\widehat{N}}^K((t_1, w_1), (t_2, w_2))=0$.
Since $\hat{\pi}$ is holomorphic, 
\begin{center}
$d_\mathbb{H}(t_1, t_2) \leq d_{\widehat{N}}^K((t_1, w_1), (t_2, w_2)) =0$.
\end{center}
Hence, we have $t_1=t_2$.
We set $t=t_1=t_2$.
Let $\rho  : \widetilde{N} \rightarrow \widehat{N}$ be the universal covering map.
Fix $(\tilde t, \tilde w_1) \in \rho^{-1}((t, w_1))$.
By theorem \ref{Kobayashi}, we have 
\begin{center}
$\displaystyle 0=d_{\widehat{N}}^K((t, w_1), (t, w_2))= \inf_{(\tilde t, \tilde w_2) } d_{\widetilde{N}}^K\left((\tilde t, \tilde w_1), (\tilde t, \tilde w_2)\right)$.
\end{center}
Here, infimum is taken over all points in $\rho^{-1}((t, w_2))$.
For all $n\geq 2$, there exist $(\tilde t, \tilde w_n)$ such that $d_{\widetilde{N}}^K\left((\tilde t, \tilde w_1), (\tilde t, \tilde w_n)\right)<\frac{1}{n}$.
Since $\widetilde{N}$ is holomorphically embedded in $\mathbb{H} \times (\mathbb{C}-\{0, 1\})$, we have
\begin{center}
$d_{\mathbb{C}-\{0, 1\}}^K(\tilde w_1, \tilde w_n) \leq d_{\widetilde{N}}^K\left((\tilde t, \tilde w_1), (\tilde t, \tilde w_n)\right)<\frac{1}{n}$.
\end{center}
Since the relative topology of $\mathbb{H}^t$ from $d_{\mathbb{C}-\{0, 1\}}^K$ is same as the topology defined by $d_{\mathbb{H}^t}^K$ and $w_1 \in \mathbb{H}^t$, we have $d_{\mathbb{H}^t}^K(\tilde w_1, \tilde w_n)\rightarrow 0$ as $n\rightarrow \infty$.
This implies that $\tilde w_n=\tilde w_{n+1}$ for sufficiently large $n$ and, hence, $\tilde w_n=\tilde w_1$.
Therefore, we have $(t_1, w_1)=(t_2, w_2)$.

Finally we prove that $\widehat{N}$ is a proper Kobayashi hyperbolic manifold.
Let $U_r(t_0, w_0)$ be the closed ball of center $(t_0, w_0) \in \widehat{N}$ and radius $r$ in $\widehat{N}$.
Take any sequence $\{(t_n, w_n)\}_{n \in \mathbb{N}}$ in $U_r(t_0, w_0)$.
We fix $(\tilde t_{0}, \tilde w_{0}) \in \rho^{-1}((t_{0}, w_{0}))$. 
By proposition \ref{Kobayashi}, there exists $(\tilde t_n, \tilde w_n) \in \rho^{-1}((t_n, w_n))$ such that 
$d_{\widetilde{N}}^K \left( (\tilde t_{0}, \tilde w_{0}), (\tilde t_n, \tilde w_n)\right)<r$ 
for each $n \in \mathbb{N}$.
Since $\widetilde{N}$ is proper, there exists a subsequence $\{(\tilde t_{n_l}, \tilde w_{n_l})\}$ which converges to  some point $(\tilde t_\infty, \tilde w_\infty) \in \widetilde{N}$.
Then,
\begin{eqnarray*}
d_{\widehat{N}}^K\left((t_{n_l}, w_{n_l}), \rho(\tilde t_\infty, \tilde w_\infty)\right) 
\leq
d_{\widetilde{N}}^K\left((\tilde t_{n_l}, \tilde w_{n_l}), (\tilde t_\infty, \tilde w_\infty)\right)
\rightarrow  0\ (l\rightarrow \infty). 
\nonumber
\end{eqnarray*}
Hence, $\widehat{N}$ is a proper Kobayashi hyperbolic manifold. 
\end{proof}

By proposition \ref{proper}, we can extend $\hat{s} : \mathbb{H}^* \rightarrow  \widehat{N}^*$ to a holomorphic section $\hat{s} : \mathbb{H} \rightarrow  \widehat{N}$ of $\hat{\pi}$.
For holomorphic sections of $\hat{\pi}$, we have the following. 
Recall that $q^\prime$  is a holomorphic quadratic differential on $Y=X/{\rm Ker}(D)$ induced by $q$ and
$g_t: Y \rightarrow Y_t$ is the Teichm\"uller map whose Beltrami coefficient is $\frac{i-t}{i+t}\frac{|q^\prime|}{q^\prime}$.
\begin{theorem}
Let $\hat{s}: \mathbb{H} \rightarrow \widehat{N}$ be a holomorphic section of $\hat{\pi}$.
Set $b=\hat{s}(i)$.
Then 
\begin{center}
$\hat{s}(t)= (t, g_t(b))$
\end{center}
for all $t \in \mathbb{H}$. 
 \end{theorem}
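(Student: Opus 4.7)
The plan is to lift $\hat s$ to a holomorphic map into the Bers fiber space over $Y$, use Bers' theorem to reinterpret it as a holomorphic map into $T(\dot Y)$ where $\dot Y = Y - \{b\}$, verify that this map is isometric, and then apply Proposition \ref{embtohqd} to recognize it as the Teichm\"uller disk defined by the restricted quadratic differential.

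First, since $\mathbb{H}$ is simply connected, $\hat s$ lifts to a holomorphic section $\tilde s: \mathbb{H} \to \widetilde N$. Recall from the proof of Proposition \ref{proper} that $\widetilde N$ embeds as a closed complex submanifold of the Bers fiber space $F(G^\prime)$, and set $\dot Y = Y - \{b\}$. By Theorem \ref{Bers}, $F(G^\prime)$ is biholomorphic to $T(\dot Y)$; choosing the lift appropriately we may arrange that $\tilde s(i)$ corresponds to the base point $[\dot Y, \mathrm{id}]$. This yields a holomorphic map $\Psi: \mathbb{H} \to T(\dot Y)$ with $\Psi(i) = [\dot Y, \mathrm{id}]$ whose composition with the forgetful map $T(\dot Y) \to T(Y)$ equals the Teichm\"uller embedding $\iota^\prime$ induced by $q^\prime$.

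Next I would show $\Psi$ is a holomorphic isometric embedding. Both $\Psi$ and the forgetful map are distance-decreasing for the Teichm\"uller metric by Theorem \ref{Royden}, while $\iota^\prime$ is isometric. Hence
\[
d_\mathbb{H}(a,b) = d_{T(Y)}(\iota^\prime(a), \iota^\prime(b)) \leq d_{T(\dot Y)}(\Psi(a), \Psi(b)) \leq d_\mathbb{H}(a,b),
\]
so equality holds throughout. Proposition \ref{embtohqd} then provides an integrable holomorphic quadratic differential $q_\Psi$ on $\dot Y$ such that $\Psi(t)$ is represented by a Teichm\"uller map from $\dot Y$ with Beltrami coefficient $\frac{i-t}{i+t}\frac{|q_\Psi|}{q_\Psi}$.

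The remaining task, which I expect to be the main obstacle, is to identify $q_\Psi$. Because $(\text{forgetful}) \circ \Psi = \iota^\prime$ and the forgetful map just reinterprets a Beltrami coefficient on $\dot Y$ as one on $Y$ (the puncture $b$ has measure zero), Theorem \ref{Teich1} applied on $Y$ forces the a.e.\ equality $\frac{|q_\Psi|}{q_\Psi} = \frac{|q^\prime|}{q^\prime}$ on $\dot Y$. Consequently $q_\Psi / q^\prime|_{\dot Y}$ is a positive real-valued holomorphic function on $\dot Y$, hence a positive constant, and $q_\Psi$ defines the same Teichm\"uller disk as $q^\prime|_{\dot Y}$. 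Therefore the Teichm\"uller map realizing $\Psi(t)$ is precisely the restriction of $g_t$ to $\dot Y$, so under the Bers identification $\tilde s(t)$ corresponds to $(t, g_t(b))$, and projecting back to $\widehat N$ gives $\hat s(t) = (t, g_t(b))$, as claimed.
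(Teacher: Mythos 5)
Your proposal is correct and follows essentially the same route as the paper's proof: lift $\hat s$ to $\widetilde N$, embed into $T(\dot Y)$ via the Bers fiber space, deduce isometry from the sandwich $\iota^\prime = \tau \circ \kappa$ with Royden's theorem, invoke Proposition \ref{embtohqd}, and identify the resulting quadratic differential with $q^\prime|_{\dot Y}$ via Teichm\"uller uniqueness. Your write-up merely makes explicit two steps the paper leaves terse (the distance-decreasing sandwich and the argument that $q_\Psi/q^\prime$ is a positive constant), which is a reasonable elaboration rather than a different method.
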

 
\begin{proof}
Set $\dot Y=Y-\{b\}$.
Let $\iota^\prime : \mathbb{H} \rightarrow T(Y) $ be the holomorphic isometric embedding defined by $q^\prime$.
Let $\tilde s: \mathbb{H} \rightarrow \widetilde{N}$ be a lift of $\hat{s}$ to $\widetilde{N}$.
Since, $\widetilde{N}$ is embedded holomorphically into the Bers fiber space $F(G^\prime)$, 
it is also embedded  holomorphically into $T(\dot Y)$. 
Here, $G^\prime$ is a Fuchsian group with $Y =  \mathbb{H} / G^\prime$.
The composition of this embedding with $\tilde s$ is an injective holomorphic map 
$\kappa : \mathbb{H} \rightarrow T(\dot Y)$ with $\kappa(i)=[\dot Y, id]$.
Let $\tau : T(\dot Y) \rightarrow T(Y)$ be the forgetful map, that is, $\tau ([f(\dot Y), f])=[f(Y), f]$ 
 for all $[f(\dot Y), f] \in T(\dot Y)$.
 Then we have the following commutative diagram:
\[
\begin{xy}
(0,0)="a"*{\mathbb{H}},
"a"+/r7em/="b"*{T(Y)},
"a"+/u5.em/="c"*{\widetilde{N}},
"c"+/r7em/="d"*{T(\dot Y)},
\ar "a"+<1.em,.em>;"b"+<-1.8em,.em>^{\iota^\prime}
\ar "c"+<1em,.em>;"d"+<-1.8em,.em>
\ar "c"+<.em,-.7em>;"a"+<.em,.7em>
\ar "d"+<.em,-.7em>;"b"+<.em,.7em>^{\tau}
\ar "a"+<.5em,.5em>;"d"+<-1.5em,-.5em>^{\kappa }
\ar@/^/ "a"+<-.8em,.5em>;"c"+<-.8em,-.5em>^{\widetilde s}
\end{xy}
\]
Since $\kappa , \tau$ is holomorphic and $\iota$ is holomorphic isometric embedding,
by theorem \ref{Royden}, $\kappa : \mathbb{H} \rightarrow T(\dot Y)$ is also an isometry.
By proposition \ref{embtohqd}, there exists a holomorphic quadratic differential $\dot q$ on $\dot Y$ which defines $\kappa$.
Take some $t\in \mathbb{H}$ $ (t \not=i)$ then the equation  $\tau \circ \kappa(t)=\iota^\prime(t)$ implies that  $\frac{i-t}{i+t}\frac{|\dot q|}{ \dot q}=\frac{i-t}{i+t}\frac{|q^\prime|}{q^\prime}$ on $\dot Y$.
This means that $\hat{s}(t)= (t, g_t(b))$ for all $t \in \mathbb{H}$.
\end{proof}

Now, we observe that  $N^*$ is a quotient manifold of $\widehat{N}^*$ by the action of the Veech group $\Gamma(X,u)$.
Let $u^\prime$ be the flat structure on $Y=X/{\rm Ker}(D)$ defined by $q^\prime$.
Since $\Gamma(X,u)={\rm Aff}^+(X,u)/{\rm Ker}(D)$, the Veech group $\Gamma(X, u)$ is considered as a subgroup of ${\rm Aff}^+(Y,u^\prime)$ and acts on $Y$.
We denote by $h_A^\prime$ each element $A$ of $\Gamma(X, u)$ if we consider $A$ as an element of ${\rm Aff}^+(Y,u^\prime)$. 
We extend the action of $\Gamma(X, u)$  onto $Y$ to an action onto $\widehat{N}$.
For all $(t, w) \in \widehat{N}$ and $h_{A}^\prime=A\in \Gamma(X, u) <{\rm Aff}^+(Y,u^\prime)$, we set
\begin{center}
$\left( h_A^\prime \right)_*(t,w) =(\bar A(t), g_{\bar A(t)}h_A^\prime g_t^{-1}(w))$.
\end{center}
Recall that $
R=\tiny{\left(
  \begin{array}{cccc}
   -1 & 0  \\
    0 & 1
  \end{array}
  \right)}$ and
$\bar A=RAR^{-1}$ acts on $\mathbb{H}$ as a
 M\"obius transformation. 
This action is induced by the action of ${\rm Aff}^+(X,u)$ on $\widehat{M}$ which we define in section \ref{vhf}.
Then, it is clear that $N^*=\widehat{N}^*/{\rm Ker}(D)$.
Now we have the following.
\begin{corollary}
Let $\varphi  : X \rightarrow Y$ be the branched covering map.
For $b \in Y$ and any point $a \in \varphi^{-1}(b)$, the following are equivalent.
\begin{itemize}
\item[(1)] The map
$\hat s_b: \mathbb{H} \ni t \mapsto(t, g_t(b))\in \widehat{N} $ induces a section of $\pi^\prime: N^* \rightarrow B$,
\item[(2)] $\Gamma(X,u)(\{b\})=\{b\}$,
\item[(3)] ${\rm Aff}^+(X,u)(\{a\})={\rm Ker}(D)(\{a\})$.
\end{itemize}
\end{corollary}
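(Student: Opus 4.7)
The plan is to show the implications (1) $\Leftrightarrow$ (2) and (2) $\Leftrightarrow$ (3) separately, since the second equivalence is purely group-theoretic while the first uses the explicit form of the action on $\widehat{N}$.

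For (1) $\Leftrightarrow$ (2), I would begin by writing down what it means for $\hat s_b$ to descend to a section of $\pi':N^*\to B$. Since $B=\mathbb{H}^*/\bar\Gamma(X,u)$ and $N^*=\widehat{N}^*/\Gamma(X,u)$, and since $\bar\Gamma(X,u)$ acts freely on $\mathbb{H}^*$ (elliptic fixed points have been removed), the section descends if and only if $\hat s_b$ is $\Gamma(X,u)$-equivariant on $\mathbb{H}^*$, i.e.\ for every $A\in\Gamma(X,u)$ and every $t\in\mathbb{H}^*$,
\begin{equation*}
\hat s_b(\bar A(t)) \;=\; (h_A^\prime)_*\,\hat s_b(t).
\end{equation*}
Using the definitions $\hat s_b(t)=(t,g_t(b))$ and $(h_A^\prime)_*(t,w)=(\bar A(t),g_{\bar A(t)}h_A^\prime g_t^{-1}(w))$, the right-hand side becomes $(\bar A(t),g_{\bar A(t)}(h_A^\prime(b)))$, while the left-hand side is $(\bar A(t),g_{\bar A(t)}(b))$. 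Since $g_{\bar A(t)}$ is a homeomorphism, the equivariance is equivalent to $h_A^\prime(b)=b$ for every $A\in\Gamma(X,u)$, which is precisely condition (2). By continuity of both sides in $t$, checking on $\mathbb{H}^*$ is the same as checking on all of $\mathbb{H}$, so no extension subtlety arises.

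For (2) $\Leftrightarrow$ (3), the key observation is that $\varphi:X\to Y$ is the quotient by ${\rm Ker}(D)$, so $\varphi^{-1}(b)={\rm Ker}(D)(\{a\})$, and for every $h\in{\rm Aff}^+(X,u)$ one has the equivariance $\varphi\circ h = h_{D(h)}^\prime \circ \varphi$. Assuming (2), take any $h\in{\rm Aff}^+(X,u)$; then $\varphi(h(a))=h_{D(h)}^\prime(b)=b$, so $h(a)\in\varphi^{-1}(b)={\rm Ker}(D)(\{a\})$, giving ${\rm Aff}^+(X,u)(\{a\})\subseteq{\rm Ker}(D)(\{a\})$; the reverse inclusion is immediate from ${\rm Ker}(D)\subseteq{\rm Aff}^+(X,u)$. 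Conversely, assuming (3), for any $A\in\Gamma(X,u)$ choose $h\in{\rm Aff}^+(X,u)$ with $D(h)=A$; then $h(a)\in{\rm Ker}(D)(\{a\})=\varphi^{-1}(b)$, so $A(b)=h_A^\prime(\varphi(a))=\varphi(h(a))=b$, which is (2).

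I do not expect any real obstacle: once the action formulas from Section \ref{vhf} are recalled, (1) $\Leftrightarrow$ (2) is a direct unwinding of definitions, and (2) $\Leftrightarrow$ (3) is a diagram chase through the exact sequence ${\rm Ker}(D)\hookrightarrow{\rm Aff}^+(X,u)\twoheadrightarrow\Gamma(X,u)$ combined with the equivariance of $\varphi$. The only point that requires a brief justification is that $\bar\Gamma(X,u)$ acts freely on $\mathbb{H}^*$, which is why equivariance of $\hat s_b$ is not only sufficient but also necessary for it to descend, and this is exactly the definition of $\mathbb{H}^*$ as the complement of the elliptic fixed point locus.
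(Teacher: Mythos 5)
Your proposal is correct and follows essentially the same route as the paper: the equivalence $(1)\Leftrightarrow(2)$ by unwinding the action formula $(h_A^\prime)_*(t,w)=(\bar A(t),g_{\bar A(t)}h_A^\prime g_t^{-1}(w))$ applied to $\hat s_b(t)=(t,g_t(b))$, and $(2)\Leftrightarrow(3)$ from $\Gamma(X,u)={\rm Aff}^+(X,u)/{\rm Ker}(D)$. You simply supply more detail than the paper does, in particular the justification via freeness of the $\bar\Gamma(X,u)$-action on $\mathbb{H}^*$ that equivariance is necessary, a point the paper leaves implicit.
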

\begin{proof}
$(1)\Rightarrow (2)$. 
If $\hat s_b$  induces a section of $\pi^\prime$, then $(h_A^\prime)_*(\hat s_b(t))=\hat s_b(\bar A(t))$ for all $h_A^\prime=A \in \Gamma(X, u)$.
This implies $h_A^\prime(b)=b$ for all $A \in \Gamma(X, u)$.

$(2)\Rightarrow (1)$. 
If  $\Gamma(X,u)(\{b\})=\{b\}$ holds, then $(h_A^\prime)_*(s_b(t))=s_b(\bar A(t))$ for all $h_A^\prime=A \in \Gamma(X, u)$.
Hence, we obtain $(1)$.

Clearly, $(2)$ and $(3)$ are equivalent since $\Gamma(X, u)= {\rm Aff}^+(X, u)/{\rm Ker}(D)$.
 \end{proof}
Recall that $\bar{\rho}|_{\mathbb{H}^*}: \mathbb{H}^* \rightarrow B= \mathbb{H}^*/ \bar{\Gamma}(X,u)$ is a covering map and $f_{{t}}: X \rightarrow X_{{t}}$ is the Teichm\"uller map with the Beltrami coefficient $\frac{i-{{t}}}{i+{{t}}}\frac{|q|}{q}$ for each ${{t}} \in \mathbb{H}^*$.
Fix any ${t} \in \mathbb{H}^*$.
We take an open neighborhood $\widetilde{U}$ of $\tilde{t}$ in $\mathbb{H}^*$ such that $\bar{\rho}|_{\widetilde{U}}: \widetilde{U} \rightarrow  \bar{\rho}(\widetilde{U})=U$ is a conformal map.
Then $\pi^{-1}(U) \subset M$ is biholomorphic to $\widetilde{M}(\widetilde{U})=\{(t, z) : t \in \widetilde{U},\ z \in X_t\} \subset  \widetilde{M}$.
By identifying $U$ with $\widetilde{U}$ and $\pi^{-1}(U)$ with $\widetilde{M}(\widetilde{U})$, we have the following.
 \begin{corollary}\label{corollary1}
Let $s: B \rightarrow M$ be a holomorphic section of $\pi$.
Then there exists $a \in X$ with ${\rm Aff}^+(X,u)(\{a\})={\rm Ker}(D)(\{a\})$ such that the induced holomorphic map $s|_{\widetilde{U}}: \widetilde{U} \rightarrow \widetilde{M}(\widetilde{U})$ satisfies
\begin{center}
$s|_{\widetilde{U}}(t)=\left( t, f_t(a) \right)$
\end{center}
for all $t \in U$. 
\end{corollary}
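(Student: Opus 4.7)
The plan is to reduce the statement to the preceding theorem and corollary by following $s$ through the tower of spaces $\widehat N \to N \supset N^* \leftarrow \pi^{-1}(B) \subset M$. Starting from $s : B \to M$, first I would compose $s$ with the branched covering $M \to N$ to obtain a holomorphic section $s' : B \to N^*$ of $\pi'$ (the image lies in $N^*$ since $\pi$ and $\pi'$ are intertwined by $M \to N$). Next I would lift $s'$ along the coverings $\widehat N^* \to N^*$ (with deck group $\Gamma(X,u)$) and $\mathbb H^* \to B$ to obtain a holomorphic section $\hat s : \mathbb H^* \to \widehat N^*$ of $\hat\pi|_{\widehat N^*}$. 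Finally, since $\widehat N$ is proper Kobayashi hyperbolic by Proposition \ref{proper}, a standard Kwack-type extension argument across the discrete set of elliptic fixed points of $\bar\Gamma(X,u)$ in $\mathbb H$ promotes $\hat s$ to a holomorphic section $\hat s : \mathbb H \to \widehat N$ of $\hat\pi$.

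Applying the preceding theorem gives $\hat s(t) = (t, g_t(b))$ for $b := \hat s(i) \in Y$. Because $\hat s$ descends by construction to the section $s'$ of $\pi'$, the preceding corollary forces $\Gamma(X,u)(\{b\}) = \{b\}$, so every $a \in \varphi^{-1}(b) \subset X$ already satisfies $\mathrm{Aff}^+(X,u)(\{a\}) = \mathrm{Ker}(D)(\{a\})$; any such $a$ is the candidate in the conclusion.

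It remains to descend from the $Y$-level back to the $X$-level on the neighborhood $\widetilde U$. Under the identifications of $\pi^{-1}(U)$ with $\widetilde M(\widetilde U)$ and of the corresponding open piece of $N$ with $\widehat N$ over $\widetilde U$, the map $\widetilde M(\widetilde U) \to \widehat N$ is the fiberwise branched covering $\varphi_t : X_t \to Y_t = X_t/\mathrm{Ker}(D)$, which fits into the commutative diagram $\varphi_t \circ f_t = g_t \circ \varphi$. Writing $s|_{\widetilde U}(t) = (t, z_t)$, this image must project to $(t, g_t(b)) = \hat s(t)$, so $z_t \in \varphi_t^{-1}(g_t(b)) = f_t(\varphi^{-1}(b))$. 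Since $\varphi^{-1}(b)$ is a finite discrete set and $t \mapsto z_t$ is holomorphic, $z_t = f_t(a)$ for a single $a \in \varphi^{-1}(b)$ independent of $t \in \widetilde U$, yielding the formula $s|_{\widetilde U}(t) = (t, f_t(a))$. The main obstacle is carefully matching the various identifications in the tower so that the local description of $s$ on $\widetilde M(\widetilde U)$ really is a lift of $\hat s$ along the fiberwise branched covering; once that bookkeeping is in place, the conclusion follows from the discreteness of $\varphi^{-1}(b)$.
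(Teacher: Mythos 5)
Your proposal is correct and follows essentially the same route the paper intends: project $s$ to a section of $\pi'$, lift to $\hat s$ on $\mathbb{H}^*$, extend over the elliptic fixed points via Proposition \ref{proper}, apply the theorem to get $\hat s(t)=(t,g_t(b))$ and the preceding corollary to get the orbit condition on $a\in\varphi^{-1}(b)$, then descend through the fiberwise branched covering and use discreteness of $\varphi^{-1}(b)$ to pin down a single $a$ on the connected chart $\widetilde U$. The paper leaves this chain implicit after the preceding theorem and corollary, and your reconstruction of it is faithful.
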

Fix $\Gamma<\Gamma(X,u)$. 
Let $\rho^\prime: \mathbb{H}^*/\Gamma\rightarrow B$
be a covering map.
Set $M_\Gamma=\{(t^\prime,z^\prime): t^\prime \in \mathbb{H}^*/\Gamma, z^\prime \in X_{\rho^\prime(t^\prime)}=\pi^{-1}(\rho^\prime(t^\prime))\}$ and let $\pi_\Gamma: M_\Gamma \rightarrow \mathbb{H}^*/\Gamma$ be the projection.
Then the triple $(M_\Gamma, \pi_\Gamma, \mathbb{H}^*/\Gamma)$ is a holomorphic family of Riemann surfaces, $M_\Gamma$ is a covering space of $M$, and $\pi_\Gamma$ is a lift of $\pi$. 
By identifying $V=(\rho^\prime)^{-1}(U)$ with $\widetilde{U}$ and $\pi_\Gamma^{-1}(V)$ with $\widetilde{M}(\widetilde{U})$, we also have the following.
\begin{corollary}\label{corollary2}
For a holomorphic section $s: \mathbb{H}^*/\Gamma \rightarrow M_\Gamma$, there exists to a point $a \in X$ with $D^{-1}(\Gamma)(\{a\})={\rm Ker}(D)(\{a\})$ such that the induced holomorphic map $s|_{\widetilde U} : \widetilde{U} \rightarrow \widetilde{M}(\widetilde{U})$
satisfies
\begin{center}
$s|_{\widetilde U}(t^\prime)=\left ( t^\prime, f_{t^\prime}(a) \right)$
\end{center}
for all $t^\prime \in \widetilde{U}$.
\end{corollary}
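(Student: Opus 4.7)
The plan is to mimic the derivation of Corollary \ref{corollary1}, with the full Veech group $\Gamma(X,u)$ and base $B$ replaced by the subgroup $\Gamma$ and intermediate cover $\mathbb{H}^*/\Gamma$, respectively. First, I would pass from the holomorphic section $s: \mathbb{H}^*/\Gamma \to M_\Gamma$ to a holomorphic section $s': \mathbb{H}^*/\Gamma \to \widehat{N}^*/\Gamma$ via the natural branched covering $M_\Gamma \to \widehat{N}^*/\Gamma$ obtained by quotienting each fiber by ${\rm Ker}(D)$. Since $\widehat{N}^* \to \widehat{N}^*/\Gamma$ is a covering map with deck group $\Gamma$, I would then lift $s'$ to a holomorphic section $\hat{s}: \mathbb{H}^* \to \widehat{N}^*$ of $\hat{\pi}|_{\widehat{N}^*}$. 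By construction, this lift is $\Gamma$-equivariant in the sense that $(h_A')_* \circ \hat{s} = \hat{s} \circ \bar{A}$ for every $A \in \Gamma$.

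Next, proposition \ref{proper} asserts that $\widehat{N}$ is a proper Kobayashi hyperbolic manifold, which is precisely the ingredient needed to extend $\hat{s}$ across the isolated elliptic fixed points of $\bar{\Gamma}(X,u)$ to a holomorphic section $\hat{s}: \mathbb{H} \to \widehat{N}$ of $\hat{\pi}$ (the same extension step used implicitly just before Corollary \ref{corollary1}). Applying the preceding theorem yields $\hat{s}(t) = (t, g_t(b))$ for every $t \in \mathbb{H}$, where $b = \hat{s}(i) \in Y = X/{\rm Ker}(D)$. The $\Gamma$-equivariance of $\hat{s}$, combined with the explicit description of the $\Gamma(X,u)$-action on $\widehat{N}$, then forces $h_A'(b) = b$ for every $A \in \Gamma$, i.e., $\Gamma(\{b\}) = \{b\}$ in $Y$. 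Choosing any $a \in \varphi^{-1}(b)$ under the branched covering $\varphi: X \to Y$, this fixed-point condition translates to the required $D^{-1}(\Gamma)(\{a\}) = {\rm Ker}(D)(\{a\})$ in $X$, exactly as in the equivalence $(2) \Leftrightarrow (3)$ of the corollary preceding Corollary \ref{corollary1}.

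Finally, under the identification of $\widetilde{U}$ with a neighborhood $V$ in $\mathbb{H}^*/\Gamma$ and of $\pi_\Gamma^{-1}(V)$ with $\widetilde{M}(\widetilde{U})$, the point $g_{t'}(b) \in Y_{t'}$ lifts through the branched covering $X_{t'} \to Y_{t'}$ (induced by $\varphi$ on fibers via $f_{t'}$) to $f_{t'}(a) \in X_{t'}$, giving the claimed local formula $s|_{\widetilde{U}}(t') = (t', f_{t'}(a))$. I expect the main obstacle to be keeping the tower of branched covers and quotients straight so that the $\Gamma$-equivariant lift $\hat{s}$ really does descend to the given section $s$, and in particular justifying the extension of $\hat{s}$ across the elliptic fixed points; the latter follows from the properness of $d_{\widehat{N}}^K$ together with the standard removable-singularities principle for holomorphic maps into proper Kobayashi hyperbolic manifolds.
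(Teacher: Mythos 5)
Your proposal is correct and follows essentially the same route as the paper: the paper states Corollary \ref{corollary2} as an immediate variant of Corollary \ref{corollary1}, obtained by running the same chain (project to the ${\rm Ker}(D)$-quotient family, lift to $\widehat{N}^*$, extend over the elliptic fixed points using Proposition \ref{proper}, apply the theorem $\hat{s}(t)=(t,g_t(b))$, and use $\Gamma$-equivariance) with the subgroup $\Gamma$ and the cover $\mathbb{H}^*/\Gamma$ in place of $\Gamma(X,u)$ and $B$. Your translation of the fixed-point condition $\Gamma(\{b\})=\{b\}$ into $D^{-1}(\Gamma)(\{a\})={\rm Ker}(D)(\{a\})$ via $\Gamma=D^{-1}(\Gamma)/{\rm Ker}(D)$ is exactly the intended adaptation of the equivalence $(2)\Leftrightarrow(3)$.
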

\begin{remark}
Note that it is not true that all points $a$ of $X$ with ${\rm Aff}^+(X,u)(\{a\})={\rm Ker}(D)(\{a\})$ define holomorphic sections of $\pi: M \rightarrow B$. 
See example \ref{D-example}.
\end{remark}
\begin{example}
Let $(X, u)$ be the flat surface as in example \ref{example1}.
Recall that $(X, u)$ is a Riemann surface of type $(2, 0)$ and $\Gamma(X, u)={\tiny \left< {  \left[\left(
  \begin{array}{cccc}
    1 & 1 \\
    0 & 1 
  \end{array}
  \right)
  \right]}, 
{   \left[\left(
  \begin{array}{cccc}
    1 & 0 \\
    2 & 1
  \end{array}
  \right)
  \right] }
\right>}$.
It is easy to see that ${\rm Ker}(D)=\left<\alpha : z \mapsto z+1, \beta: z \mapsto -z \right>$.
The candidates of holomorphic sections of the corresponding Veech holomorphic family of Riemann surfaces are the points $a$ with ${\rm Aff}^+(X,u)(\{a\})={\rm Ker}(D)(\{a\})$.
The candidates are $(0, 0)$, $(\frac{1}{2}, 0)$, $(\frac{3}{2}, 0)$, $(0, 1)$, $(\frac{1}{2}, 1)$, $(\frac{3}{2}, 1)$ in figure \ref{Veechexam1-1}.
They are the Weierstrass points of $X$ and the hyperelliptic involution of $X$ is $\alpha^2$.
\end{example}

\begin{example}
Let $(X, u)$ be the flat surface as in example \ref{example2}.
Then $\Gamma(X, u)=\tiny \left< R_n, T_n \right>$.
It is easy to see that ${\rm Ker}(D)=\left<z \mapsto -z \right>$.
Only the points of $X$ which correspond to the center and the vertices of the corresponding polygon satisfy
${\rm Aff}^+(X,u)(\{a\})={\rm Ker}(D)(\{a\})$.
Hence, the two points are the candidates of the holomorphic sections of the corresponding Veech holomorphic family of Riemann surfaces.
Since, one of them is the critical point of $(X, u)$ and the other is not a critical point of $(X, u)$, if we construct sections locally  from the points and extend them globally, the permutation of these two points does not happen.
Hence, the two points define holomorphic sections and they are all the holomorphic sections of the Veech holomorphic family of Riemann surfaces. 
\end{example}
 \section{Diophantine problems}\label{Diophantine}
In this section, we consider Diophantine problems on function fields. 
We associate the Diophantine problems with holomorphic families of Riemann surfaces and their holomorphic sections.
At the end of this section, we construct some examples of Diophantine equations on function fields and give all solutions of them. 
 
Let $B_0$ be a compact Riemann surface. 
Denote by $M(B_0)$ the field of all meromorphic functions on $B_0$.
We take an irreducible homogeneous polynomial
\begin{center}
$\displaystyle f(X,Y,Z)= \sum_{i+j+k=N} A_{i,j,k} X^iY^jZ^k$ $(A_{i,j,k}\in M(B_0))$.
\end{center} 
\begin{problem}[Diophantine problem]
Find all solutions of $f(X,Y,Z)=0$ in $\mathbb{P}^2(M(B_0))$.
\end{problem}
We see a relation between Diophantine problems and holomorphic families of Riemann surfaces.
We set
\begin{eqnarray}
f_t(x,y,z)= \sum_{i+j+k=N} A_{i,j,k}(t) x^iy^jz^k.
\nonumber
\end{eqnarray} 
Since $f$ is irreducible, there exist $t_1, \cdots, t_n \in B_0$ such that $f_t(x,y,z)\in \mathbb{C}[x, y,z]$
is irreducible for each $t \in B=B_0-\{t_1, \cdots, t_n\}$.
Then, for each $t \in B$, 
\begin{center}
$X_t=\{[x:y:z] \in \mathbb{P}^2(\mathbb{C}): f_t(x,y,z)=0\}$
\end{center}
is an algebraic curve, that is, a compact Riemann surface.
Set 
\begin {center}
$\displaystyle M=\{(t, [x:y:z]) \in B \times \mathbb{P}^2(\mathbb{C}) : f_t(x,y,z)=0\}
$
\end{center}
and let $\pi : M \rightarrow B$ be the projection.
Then the triple $(M, \pi, B)$ is a holomorphic family of Riemann surfaces. 
Next, if $[X: Y: Z] \in \mathbb{P}^2(M(B_0))$ is a solution of $f(X,Y,Z)=0$, then 
\begin{eqnarray}
f(X(t),Y(t),Z(t))= \sum_{i+j+k=N} A_{i,j,k}(t) X(t)^iY(t)^jZ(t)^k=0
\nonumber
\end{eqnarray} 
for all $t \in B_0$.
This means that $[X(t) : Y(t) : Z(t)] \in X_t$ for all $t \in B$.
Hence, the map $B \ni t \mapsto (t, [X(t): Y(t): Z(t)]) \in M$ is a holomorphic section of $\pi: M \rightarrow B$.
Conversely, if $s: B \rightarrow M$ is a holomorphic section, then $s(t)=(t, [X(t): Y(t): Z(t)])$ for some $X, Y, Z \in M(B_0)$ and $[X: Y: Z]$ is a solution of $f(X,Y,Z)=0$.
In conclusion, a Diophantine problem is to find all holomorphic sections of the corresponding holomorphic families of Riemann surfaces.
\begin{example}\label{D-example}
Let $(X, u)$ be the flat surface as in example \ref{example1}.
Let $(M, \pi, B)$ be the Veech holomorphic family of Riemann surfaces defined by $(X,u)$.
Set $S={\tiny  \left[\left(
  \begin{array}{cccc}
    1 & 1 \\
    0 & 1 
  \end{array}
  \right)
  \right]}$ 
and
$T={\tiny   \left[\left(
  \begin{array}{cccc}
    1 & 0 \\
    2 & 1
  \end{array}
  \right)
  \right] }$.
Then $X$ is a Riemann surface of type $(2,0)$, $\Gamma(X, u)= \left< S,T \right>$, ${\rm Ker}(D)=\left<\alpha : z \mapsto z+1, \beta: z \mapsto -z \right>$, $\alpha^2$ is the hyperelliptic involution of $X$, and the Weierstrass points of $X$ are $(0, 0)$, $(\frac{1}{2}, 0)$, $(\frac{3}{2}, 0)$, $(0, 1)$, $(\frac{1}{2}, 1)$ and $(\frac{3}{2}, 1)$.
We denote the set of all Weierstrass points of $X$ by $WP$.
Since $\mathbb{H}/\Gamma(X,u)$ is genus $0$ and has $2$ punctures and one cone point of order $2$, $B=\mathbb{H}^*/\bar{\Gamma}(X,u)$ is a 3-punctured sphere. 
Recall that 
$
R=\tiny{\left(
  \begin{array}{cccc}
   -1 & 0  \\
    0 & 1
  \end{array}
  \right)}$ 
  and $\bar{\Gamma}(X,u)=R \Gamma(X,u) R^{-1}$.

We describe each fiber $X_t=\pi^{-1}(t)$ $(t\in B)$ as an algebraic curve.
By construction, $X_t=(X, A_t\circ u)$ for some $A_t \in {\rm SL}(2, \mathbb{R})$.
Let $T_{A_t}: (X,u) \rightarrow (X, A_t \circ u)$ be the identity map.
Note that $T_{A_t}$ is an affine map whose derivative is $A_t$ with respect to the charts of the flat structures.  
Set $\alpha_t=T_{A_t}\circ \alpha \circ T_{A_t}^{-1}$, $\beta_t=T_{A_t}\circ \beta \circ T_{A_t}^{-1}$ and ${WP}_t=T_{A_t}(WP)$.
Then  $K_t=\left< \alpha_t, \beta_t \right>$ is a group of conformal automorphisms of $X_t$.
The automorphism $\alpha_t^2$ is also the hyperelliptic involution of $X_t$ and ${WP}_t$ is the set of all  Weierstrass points of $X_t$.
Then $R_t=(X_t-{WP}_t)/\left< \alpha_t^2 \right>$ is a Riemann surface of type $(0,6)$ and has conformal automorphisms $\alpha_t^\prime, \beta_t^\prime$ induced by $\alpha_t, \beta_t$. 
We may assume that $0, 1, \infty$ are punctures of $Y_t$, two critical points of $X_t$ are mapped to $0, \infty $ and another Weierstrass point is mapped to $1$. 
Then $\alpha_t^\prime$ is a M\"obius transformation of order 2 which fixes $0, \infty$ and $\beta_t^\prime$ is a M\"obius transformation of order 2 which permutes $0$ and $\infty$.
Therefore, $\alpha_t^\prime(z)=-z$ and $\beta_t^\prime(z)=\lambda_t/z$ for some $\lambda_t \in \mathbb{C}-\{0,\pm 1\}$.
This implies that $R_t= \mathbb{C}-\{0,\pm 1,\pm \lambda_t\}$ and 
\begin{center}
$X_t=\{(x, y) \in \hat{\mathbb{C}}^2: y^2=x(x^2-1)(x^2-\lambda_t^2)\}$.
\end{center}
Also, we can write $X_t$ as 
\begin{center}
$X_t=\{(x, y) \in \hat{\mathbb{C}}^2: y^2=x(x^2-1)(x^2-1/\lambda_t^2)\}$.
\end{center}
Thus, we have the holomorphic map 
\begin{center}
$\Psi  : \mathbb{H}/\bar{\Gamma}(X, u) \ni t \mapsto \{\lambda_t^2, 1/\lambda_t^2\} \in \left(\mathbb{C}-\{0,1\}\right)/\left<z \mapsto 1/z \right>$.
\end{center}
By construction, $X_t$ and $X_{t^\prime}$ are not conformal equivalent for two distinct points $t, t^\prime \in \mathbb{H}/\bar{\Gamma}(X, u)$.
This implies that $\Psi$ is injective, and hence conformal map. 
The map $\phi (z)=\frac{1}{2}(z+1/z)$ induces a conformal map from $\left(\mathbb{C}-\{0,1\}\right)/\left<z \mapsto 1/z \right>$ to $\mathbb{C}-\{0, 1\}$.
And,  $-1 \in \mathbb{C}-\{0, 1\}$ is a cone point of $\left(\mathbb{C}-\{0,1\}\right)/\left<z \mapsto 1/z \right>$.
Hence, $B=\mathbb{H}^*/\bar{\Gamma}(X, u)=\mathbb{C}-\{0, \pm 1\}$.
Since the parameter $t$ is in $B$ and $\lambda_t^2, 1/\lambda_t^2$ are in $\mathbb{C}-\{0,\pm1\}=\phi^{-1}(B)$ which is a double cover of $B$, if the parameter $t$ moves on $B$ along a closed curve around $1$ or $-1$, then the descriptions of $X_t$ as above appear reciprocally.
(See figure \ref{Diophantine4}.)
We change the base space $B$ to $\phi^{-1}(B)=\mathbb{C}-\{0,\pm 1\}$.
By calculation, it is easily proved that $\Gamma=\left< S, T^2, TST^{-1} \right>$ is the subgroup of $\Gamma(X, u)$ which corresponds to the covering map $\phi : \mathbb{C}-\{0,\pm 1\}\rightarrow B$.
Let $(M_\Gamma, \pi_\Gamma, \mathbb{C}-\{0,\pm 1\})$ be the holomorphic family of Riemann surfaces induced by $(M, \pi, B)$ by replacing $B$ to $B_\Gamma=\mathbb{H}^*/R \Gamma R^{-1}=\mathbb{C}-\{0,\pm 1\}$.
Then
\begin {eqnarray}
\displaystyle M_\Gamma &=&\{(t, (x,y)) \in B_\Gamma \times \hat{\mathbb{C}}^2 : 
y^2=x(x^2-1)(x^2-t)\}
\nonumber
\\
&=&\{(t, [X:Y:Z]) \in B_\Gamma \times \mathbb{P}^2(\mathbb{C}) : 
Y^2Z^3=X(X^2-Z^2)(X^2-tZ^2)\}.
\nonumber
\end{eqnarray}
From corollary \ref{corollary2}, all holomorphic sections of $\pi_\Gamma$ correspond to the points in $X$ which satisfy 
$D^{-1}(\Gamma)(\{a\})={\rm Ker}(D)(\{a\})$.
By computation, it is easy to see that such points are exactly the Weierstrass points.
Hence, the candidates of holomorphic sections are
\begin{eqnarray}
(t,[X:Y:Z])&=& (t,[0:0:1]), (t,[1:0:1]),(t,[-1:0:1]), \nonumber \\
&&(t,[0:1:0]),(t,[\sqrt{t}:0:1]), (t,[-\sqrt{t}:0:1]).
\nonumber
\end{eqnarray}
They are well-defined except for the last two.
Thus, we conclude that all the holomorphic sections of $\pi_\Gamma$ are  
\begin{center}
$(t,[X:Y:Z])= (t,[0:0:1]), (t,[1:0:1]),(t,[-1:0:1]), (t,[0:1:0])$
\end{center}
and all the solutions of the Diophantine equation $f(X, Y, Z)=X(X^2-Z^2)(X^2-tZ^2)-Y^2Z^3$ 
in $\mathbb{P}^2(M(\hat{\mathbb{C}}))$ is  
\begin{center}
$[X:Y:Z]= [0:0:1], [1:0:1],[-1:0:1], [0:1:0]$.
\end{center}
\begin{figure}[h]
 \begin{center}
 \includegraphics*[keepaspectratio, scale=1]{Diophantine4.eps}
\caption{A closed curve with base point $t$ and a lift of the curve}
\label{Diophantine4}
 \end{center}
 \end{figure}
\end{example}
\begin{remark}
We can extend $(M_\Gamma, \pi_\Gamma, \mathbb{C}-\{0,\pm 1\})$ to a holomorphic family of Riemann surfaces over $\mathbb{C}-\{0,1\}$.
On the other hand, $t=0,\infty$ correspond to the Riemann surface with one node obtained by contracting a horizontal closed curve on $(X,u)$.
And $t=1$ corresponds to a Riemann surface with two nodes obtained by contracting vertical closed curves on $(X,u) $ which are the core curves of two vertical cylinders.
\end{remark}
\begin{example}\label{D-example2}
By the same argument as example \ref{D-example}, we obtain the following examples.
Let us consider the flat surface $(X_m, u_m)$ as in figure \ref{Diophantine5} for $m \geq 2$.
If $m$ is even, $X_m$ has genus $m$ and two critical points.
And if $m$ is odd, $X_m$ has genus $m-1$ and four critical points.
The Veech groups $\Gamma(X_m,u_m)$ are equal to $\Gamma(X,u)=\left<S, T\right>$ as in example \ref{D-example}. 
Let $(M_m, \pi_m, \mathbb{C}-\{0,\pm 1\})$  be the holomorphic families of Riemann surfaces constructed in the same way as example \ref{D-example}.
Then $(M_m, \pi_m, \mathbb{C}-\{0,\pm 1\})$  corresponds to a Diophantine equation
\begin{center}
$f_m(X,Y,Z)=X(X^m -Z^m)(X^m -t Z^m)-Y^2 Z^{2 m-1}=0$.
\end{center} 
And all solutions of $f_m(X,Y,Z)=0$ are
\begin{center}
$[X:Y:Z]= [0:0:1], [0:1:0], [{\rm exp}(2\pi i \nu /m):0:1]$ $(\nu =1,2,\cdots ,m)$.
\end{center} 
\begin{figure}[h]
 \begin{center}
 \includegraphics*[keepaspectratio, scale=1]{Diophantine5.eps}
\caption{The flat surface $X_m$}
\label{Diophantine5}
 \end{center}
 \end{figure}

\end{example}

 \section{Upper bound of the number of the holomorphic sections}\label{number}
 In this section, we give an upper bound of the numbers of holomorphic sections of certain Veech holomorphic families of Riemann surfaces by topological quantities of $B$ and the fiber Riemann surfaces. 
 
Let $X$ be a Riemann surface of type $(g, n)$ and $q$ be an integrable holomorphic quadratic differential on $X$.
Denote by $u$ the flat structure on $X$ induced by $q$.
Let $(M, \pi, B)$ be the Veech holomorphic family of Riemann surfaces defined by $(X,u)$.
We assume that $\Gamma(X,u)$ is a Fuchsian group of type $(p, k: \nu_1, \cdots, \nu_k)$ $(\nu_i \in \{2,3, \cdots, \infty\})$.
Then $B=\mathbb{H}^*/\Gamma(X, u)$ is a Riemann surface of type $(p,k)$. 
We prove the following theorem.
\begin{theorem}\label{numberthm}
If $q$ has a simple Jenkins-Strebel direction, then the number of holomorphic sections of $(M, \pi, B)$ is at most
\begin{center}
$32\pi (2p-2+k)(3g-3+n)^2(3g-2+n)-2g+2$.
\end{center}
\end{theorem}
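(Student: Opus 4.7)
The plan is to combine Corollary \ref{corollary1} with the cylinder structure given by the simple Jenkins--Strebel direction, and then to invoke the general Fuchsian-group estimate promised in Theorem \ref{fuchsian}. By that corollary, the holomorphic sections of $(M,\pi,B)$ inject into the set of points $a\in X$ with $\mathrm{Aff}^+(X,u)(\{a\})=\mathrm{Ker}(D)(\{a\})$, and projecting to $Y=X/\mathrm{Ker}(D)$, this is exactly the set of fixed points of $\Gamma(X,u)$ on $Y$. So it suffices to bound the number of $\Gamma(X,u)$-fixed points on $Y$, and I will split this count into candidates lying on the critical locus of the induced flat structure on $Y$ and candidates that do not.

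The critical contribution is essentially topological. By the remark after the definition of holomorphic quadratic differentials, $\sum_{z\in\bar X}\mathrm{ord}(z)=4g-4$, and since integrability forces $\mathrm{ord}(z)\geq -1$, the number of critical points of $(X,u)$ is controlled by $4g-4$ together with the punctures; after passing to $Y$ and accounting for the $\mathrm{Ker}(D)$-orbits, the net contribution reproduces the additive $-2g+2$ in the statement. For the non-critical candidates I use the hypothesis: the simple Jenkins--Strebel direction yields a single cylinder $R\subset(X,u)$ and a distinguished affine Dehn twist $\tau_R\in\mathrm{Aff}^+(X,u)$ whose derivative is a parabolic element $P\in\Gamma(X,u)$. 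Any non-critical candidate $a$ must satisfy $\tau_R(a)\in\mathrm{Ker}(D)(\{a\})$, so its $\tau_R$-orbit is contained in a finite $\mathrm{Ker}(D)$-orbit; this forces $a$ to sit at a discrete family of heights in $R$ whose size is controlled by $|\mathrm{Ker}(D)|$ and the modulus of $R$.

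To convert this cylinder-level count into the stated topological bound, I apply Theorem \ref{fuchsian}: a Fuchsian-group inequality whose proof I expect to invoke Shimizu's lemma (Lemma \ref{Shimizu}) and the Ford-region description (Proposition \ref{fundamental}), and whose output bounds the relevant counting invariant of $\Gamma(X,u)$ by a product of the hyperbolic area $2\pi(2p-2+k)$ of $B$ and a polynomial factor in $\dim_{\mathbb{C}}T(X)=3g-3+n$. Feeding the cylinder estimate from the previous step into this theorem produces the multiplicative factor $32\pi(2p-2+k)(3g-3+n)^2(3g-2+n)$, and adding the critical contribution $-2g+2$ gives the stated bound.

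The main obstacle will be the interface between the cylinder-level analysis and Theorem \ref{fuchsian}: identifying precisely which numerical invariant of the Veech group the theorem controls, and tracking through the dimensional factors $(3g-3+n)^2(3g-2+n)$. In particular, non-critical candidates lying on the saddle connections bounding $R$ have to be either excluded or absorbed into the critical count, and the $\mathrm{Ker}(D)$-identifications among parabolic translates of a candidate must be bookkept carefully — this is where the dimensional factor enters and where constants are most likely to be lost; matching exactly the constant $32\pi$ rather than a weaker multiple will require the Ford-region area argument to be read off sharply from Shimizu's lemma.
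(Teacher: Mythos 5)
Your skeleton matches the paper's: reduce to counting $S(X,u)=\{a\in X:{\rm Aff}^+(X,u)(\{a\})={\rm Ker}(D)(\{a\})\}$ via Corollary \ref{corollary1}, exploit the single horizontal cylinder and its parabolic affine twist, and bring in Theorem \ref{fuchsian} (proved exactly as you guess, by the Ford region plus Shimizu's lemma) to convert a Veech-group invariant into the area factor $2\pi(2p-2+k)$. But as written the plan has a genuine gap at its center: the parabolic twist $\tau_R$ only confines the non-critical candidates to a \emph{discrete family of horizontal closed geodesics} $l'_w$, $w\in I_0$, in $Y=X/{\rm Ker}(D)$ --- each of which is still a continuum of points. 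Nothing in your argument cuts these circles down to finitely many points, and Theorem \ref{fuchsian} cannot do it by itself. The paper's essential extra idea is to take a \emph{second} element $A=\left[\begin{smallmatrix}a&b\\ c&d\end{smallmatrix}\right]\in\Gamma(X,u)$ with $c\neq 0$ and observe that every $\Gamma(X,u)$-fixed point must lie in ${\rm Cross}(A)=\bigl(\bigcup_{w\in I_0}(l'_w\cap h'_A(l'_w))\bigr)\cup\bigl(L'\cap h'_A(L')\bigr)$, where $L'$ is the union of horizontal saddle connections; a direct slope computation gives $\sharp(l'_w\cap h'_A(l'_w))=W'|c|/H'$ and $\sharp(L'\cap h'_A(L'))=W'|c|/H'-2g'+2$. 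This is precisely the counting invariant that Theorem \ref{fuchsian} controls: it supplies an $A$ with $1\le|c|<{\rm Area}(\mathbb{H}/\Gamma)-k_0+1\le 2\pi(2p-2+k)$, and the $-2g'+2$ term, pushed back through $\varphi$ by Riemann--Hurwitz, is where the additive $-2g+2$ actually comes from (not from a separate critical-point count as you suggest).

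A second, smaller gap is your attribution of the polynomial factor to $\dim_{\mathbb{C}}T(X)$. In the paper the factors $(3g-3+n)^2$ and $(3g-2+n)$ arise from concrete flat-geometry lemmas: the number of horizontal saddle connections on each boundary of the rectangle satisfies $\frac{n_0+n_1}{2}\le 2(3g-3+n)$ (Euler characteristic, since $\sharp C(X,u)\le 4g-4+2n$), which yields ${\rm mod}(X)/b_0\le 8(3g-3+n)^2$ (by taking powers of the parabolic twist until it fixes $L_0$ and $L_1$ pointwise) and $\sharp{\rm Ker}(D)\le 4(3g-3+n)$. Together with $b_0\le{\rm mod}(X)$ these feed into $\sharp I_0\le\lceil b_0\sharp{\rm Ker}(D)/{\rm mod}(X)+1\rceil$ and produce the stated constants. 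Without the ${\rm Cross}(A)$ construction and these saddle-connection counts, your outline cannot be completed to a finite bound, let alone the stated one.
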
 
From corollary \ref{corollary1}, the number of holomorphic sections is at most the cardinal of the set\begin{center}
$S(X, u)=\{a \in X: {\rm Aff}^+(X, u)(\{a\})={\rm Ker}(D)(\{a\})\}$.
\end{center}
We estimate the cardinal of $S(X, u)$.
We may assume that $\theta=0$ is a simple Jenkins-Strebel direction of $q$.
Let  $C(X, u)$ be the set of all critical points on $X$.
Let $\varphi: X \rightarrow Y=X/{\rm Ker}(D) $ be the branched covering map and $B(X)$ be the set of all branch points of $\varphi$.
The Riemann surface $Y= X/{\rm Ker}(D)$ has the flat structure $u^\prime$ induced by $(X, u)$.
We denote the set of all critical points of $(Y, u^\prime)$ by $C(Y, u^\prime)$ and the holomorphic quadratic differential on $Y$ which corresponds to $(Y, u^\prime)$ by $q^\prime$. 
Note that $\varphi(C(X, u))=C(Y, u^\prime)$ is not true.
Recall that the Veech group $\Gamma(X,u)$ corresponds to a subgroup of the affine group ${\rm Aff}^+(Y, u^\prime)$ of $(Y, u^\prime)$.
If we consider $\Gamma(X, u)$ as a subgroup of ${\rm Aff}^+(Y, u^\prime)$, we write $A \in \Gamma(X, u)$ by $h_A^\prime$.
Clearly, $\varphi(S(X, u))=\{b \in Y: \Gamma(X, u)\{b\}=\{b\}\}$.

Since  $\theta=0$ is a simple Jenkins-Strebel direction, $(X, u)$ is constructed from a rectangle 
whose vertical sides are glued by a translation and horizontal sides correspond to the union of all horizontal saddle connections of $(X, u)$.
Let $H, W >0$ be the height and the width of the rectangle, respectively.
Let  $R$ be  the domain in $X$ which corresponds to the interior of the rectangle.
Then there exists a chart  $z: R \rightarrow (0,W) \times (-\frac{H}{2}, \frac{H}{2})$ in $u$.
For every $h \in {\rm Ker}(D)$, there exists  an open set $U \subset R$ such that $h(U) \subset R$
and $z \circ h \circ z^{-1}: z(U)\rightarrow z(h(U))$ is the form $z \mapsto \pm z+c$ for some $c \in \mathbb{R}$.
The sign of the representation of $h$ is uniquely determined  and $c$ is also uniquely determined up to $\pm W$.
We identify $h$ with the representation.
Then the following lemma is obvious.
\begin{lemma}\label{kernel}
The group ${\rm Ker}(D)$ is one of the following form:
\begin{center}
$\left< z \mapsto z+c\right>, \left<z \mapsto -z+c \right>, \left<z \mapsto z+c,  z \mapsto -z+d \right>$.
\end{center}
\end{lemma}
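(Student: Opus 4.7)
The plan is to use the sign of the local representation $\zeta \mapsto \pm \zeta + c$ to split ${\rm Ker}(D)$ into two cosets, and then to use the finite critical set of $(X,u)$ to show the translation part is finite cyclic. The excerpt already establishes that each $h \in {\rm Ker}(D)$ acts on the chart $(R,z)$ as an affine isometry $\zeta \mapsto \varepsilon_h \zeta + c_h$, with $\varepsilon_h \in \{\pm 1\}$ and $c_h \in \mathbb{R}$ (well-defined modulo $\pm W$), and that the sign $\varepsilon_h$ is uniquely determined. The composition law $(\varepsilon_1 \zeta + c_1) \circ (\varepsilon_2 \zeta + c_2) = \varepsilon_1 \varepsilon_2 \zeta + (\varepsilon_1 c_2 + c_1)$ shows that $\sigma : h \mapsto \varepsilon_h$ is a group homomorphism ${\rm Ker}(D) \to \{\pm 1\}$. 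I would set $K_{+} = \sigma^{-1}(+1)$ and $K_{-} = \sigma^{-1}(-1)$.

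Next, I would show that $K_{+}$ is a finite cyclic group. Each element of $K_{+}$ is a horizontal translation $\zeta \mapsto \zeta + c$, so $K_{+}$ embeds into the circle $\mathbb{R}/W\mathbb{Z}$. The group ${\rm Ker}(D)$ preserves the finite set $C$ of critical points of $(X,u)$, which is non-empty in either case: zeros of $q$ exist by Riemann--Roch when $g \geq 2$, and $n \geq 1$ forces a puncture when $g = 1$. The action of $K_{+}$ on $C$ is faithful, since a non-trivial horizontal translation on the cylinder has no fixed point. Hence $K_{+}$ injects into ${\rm Sym}(C)$, is finite, and being a finite subgroup of $S^{1}$, is cyclic, generated by some $\zeta \mapsto \zeta + c_{0}$.

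If $K_{-}$ is empty, then ${\rm Ker}(D) = K_{+} = \langle \zeta \mapsto \zeta + c_{0} \rangle$, which is the first case. Otherwise, fix any $h_{0} \in K_{-}$, say $h_{0} : \zeta \mapsto -\zeta + d$; the observation that $h \circ h_{0}^{-1} \in K_{+}$ for every $h \in K_{-}$ gives $K_{-} = h_{0} K_{+}$, so ${\rm Ker}(D) = \langle \zeta \mapsto \zeta + c_{0},\ \zeta \mapsto -\zeta + d \rangle$. When $K_{+}$ is trivial this degenerates to the second case $\langle \zeta \mapsto -\zeta + d \rangle$; otherwise we obtain the third case. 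I expect the main obstacle to be the finiteness of $K_{+}$: once this is in hand, the rest of the argument is purely algebraic and the three cases fall out of the coset decomposition of ${\rm Ker}(D)$ by $\sigma$.
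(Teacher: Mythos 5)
Your overall strategy---split ${\rm Ker}(D)$ by the sign homomorphism, show the translation part $K_{+}$ is a finite subgroup of the rotation group of the circle $\mathbb{R}/W\mathbb{Z}$, and read the three forms off the coset decomposition---is the natural one, and the paper itself supplies no proof (it declares the lemma obvious after the identification $h \leftrightarrow (z \mapsto \pm z + c)$). But the step you yourself identify as the crux contains a genuine error: the action of $K_{+}$ on the critical set $C$ is \emph{not} faithful in general. The critical points do not lie in the open cylinder but on its boundary, i.e.\ on the horizontal saddle connections, where the edge identifications can cause a point of $X$ to have several preimages on the boundary circle of the rectangle; a nontrivial rotation may permute those preimages among themselves and hence fix the point of $X$. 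The paper's own Example \ref{example1} refutes the claim: there ${\rm Ker}(D)=\left<\alpha : z \mapsto z+1,\ \beta : z \mapsto -z\right>$, the element $\alpha^2 \in K_{+}$ is the hyperelliptic involution, and the two critical points are Weierstrass points (in Example \ref{D-example} they are the branch points sent to $0$ and $\infty$), so the nontrivial element $\alpha^{2}$ fixes every point of $C$. Thus ``$K_{+}$ injects into ${\rm Sym}(C)$'' fails, and with it your justification of finiteness.

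The argument is salvageable by replacing the action on $C \subset X$ with the action on the finite, nonempty set of positions of critical points on one boundary circle of the rectangle $R$ (equivalently, on the finitely many horizontal saddle connections making up $L_0$ or $L_1$). An element of $K_{+}$ with translation part $c$ induces the rotation $x \mapsto x+c$ of that circle $\mathbb{R}/W\mathbb{Z}$ and preserves this finite set; a nontrivial rotation acts freely on the circle, so its order divides the cardinality of the invariant set, whence $K_{+}$ is finite. The rest of your argument (a finite group of rotations is cyclic; $K_{-}$ is empty or a single coset of $K_{+}$, giving the three cases) then goes through unchanged. This corrected mechanism is exactly the one the paper uses later in the proof of Lemma \ref{ker}, where some power $h^{m}$ with $m \leq n_1$ is shown to fix $L_1$ pointwise and is therefore the identity.
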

Let $sgn : {\rm Ker}(D) \rightarrow \{\pm 1\}$ be the homomorphism which maps $h \in {\rm Ker}(D)$ to the derivative of $z \circ h \circ z^{-1}$. 
Denote by $l_y$ the closed curve in $X$ with $z(R \cap l_y)=(0,W) \times \{y\}$ for $y\in  (-\frac{H}{2}, \frac{H}{2})$.
\begin{lemma}\label{involution}
Every $h \in {\rm Ker}(D)$ with $sgn(h)=-1$ has two fixed points on $l_0$ and $h^2=id$. 
\end{lemma}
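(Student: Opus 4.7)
The approach is to work in the global cylinder picture afforded by the simple Jenkins-Strebel direction $\theta = 0$. Since $(X,u)$ decomposes into a single horizontal cylinder, gluing the two vertical sides of the rectangle $(0,W) \times (-\frac{H}{2}, \frac{H}{2})$ by translation realizes $R$ as the open cylinder $\mathcal{C} = (\mathbb{R}/W\mathbb{Z}) \times (-\frac{H}{2}, \frac{H}{2})$, which is dense in $X$. By lemma \ref{kernel} together with the assumption $sgn(h) = -1$, after identifying $R$ with $\mathcal{C}$, the homeomorphism $h$ corresponds to a map of the form $(x, y) \mapsto (-x + c, -y)$, where $c$ is well defined in $\mathbb{R}/W\mathbb{Z}$.

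Both claims then follow from direct computation on $\mathcal{C}$. For $h^2 = \mathrm{id}$: one checks $h^2(x, y) = (-(-x + c) + c, -(-y)) = (x, y)$, so $h^2$ agrees with the identity on the dense open set $R$, and hence on all of $X$ by continuity. For the fixed points: a point $(x, y) \in \mathcal{C}$ is fixed by $h$ iff $y = -y$ and $-x + c \equiv x \pmod{W}$. The first equation forces $y = 0$, which places every fixed point of $h$ automatically on $l_0$; the second reduces to $2x \equiv c \pmod{W}$ and has exactly the two solutions $c/2$ and $c/2 + W/2$ in $\mathbb{R}/W\mathbb{Z}$. Since $l_0$ is an embedded closed curve in $X$, these yield two distinct fixed points, and there can be no further fixed points in $X \setminus R$ because that set is the union of horizontal saddle connections, along which $y = \pm \frac{H}{2}$, incompatible with $y = -y$.

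The main step requiring care is justifying the passage from the local chart description in the paper, where the form $z \mapsto \pm z + c$ is stated only on an open subset $U \subset R$ and $c$ is determined only up to $\pm W$, to the global cylinder description used above. This amounts to verifying that the vertical-side identification $x \mapsto x + W$ commutes with the reflection $(x, y) \mapsto (-x + c, -y)$ modulo the identification, which reduces to the observation that $-(x + W) + c$ and $-x + c$ differ by $-W$ and hence represent the same element of $\mathbb{R}/W\mathbb{Z}$, and that the resulting global self-map extends continuously across the horizontal saddle connections bounding $R$. The latter is automatic because $h$ is by hypothesis already a homeomorphism of $X$, so no additional compatibility needs to be imposed by hand.
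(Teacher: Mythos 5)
Your proof is correct and follows essentially the same route as the paper: both arguments rest on the global form $z \mapsto -z+c$ (with $c$ real) of $h$ on the single horizontal cylinder, so that $h$ preserves the core curve $l_0$, reverses its orientation — giving exactly the two fixed points $c/2$ and $c/2+W/2$ on $l_0$ — and squares to a translation fixing those points, hence to the identity. One small caveat: your closing claim that there are no further fixed points in $X\setminus R$ is not needed for the lemma and its justification is faulty, because the two horizontal edges $y=\pm\frac{H}{2}$ of the rectangle are identified with each other in $X$ along the saddle connections, so $y=-y$ is not a contradiction there; this does not affect the two assertions the lemma actually makes.
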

\begin{proof}
Since $h$ fixes $l_0$ and reverse the orientation of $l_0$, $h$ has two fixed points in $l_0$.
This implies that $z \circ h^2 \circ z^{-1}$ is a translation with fixed points in $l_0$.
Hence, we have $h^2=id$.
\end{proof}
We set
\begin{eqnarray}
b_0&=& \inf \Bigl\{|b| : 
\tiny  \left[\left(
  \begin{array}{cccc}
    1 & b \\
    0 & 1 
  \end{array}
  \right)
  \right] \in \Gamma(X, u),\ b \not =0
\Bigr\},
\nonumber \\
c_0&=& \inf \Bigl\{|c| : 
\tiny  \left[\left(
  \begin{array}{cccc}
    a & b \\
    c & d 
  \end{array}
  \right)
  \right] \in \Gamma(X, u),\ c \not =0
\Bigr\}.
\nonumber
\end{eqnarray}
To prove theorem \ref{numberthm}, we show the following.
\begin{proposition}\label{number2}
If $sgn({\rm Ker}(D))=\{1\}$, then
 \begin{center}
$\sharp S(X, u) \leq {\rm mod}(X)c_0 \lceil \frac{b_0 \sharp {\rm Ker}(D)}{{\rm mod}(X)}+1 \rceil -2g+2$.
 \end{center}
 And if $sgn({\rm Ker}(D)=\{\pm 1\}$, then
 \begin{center}
$\sharp S(X, u) \leq 4{\rm mod}(X)c_0 \lceil$ $\frac{b_0 \sharp {\rm Ker}(D)}{4{\rm mod}(X)}+\frac{1}{2} \rceil -2g+2$.
\end{center}
Here, ${\rm mod}(X)=\frac{W}{H}$ and $\lceil x \rceil$ is the smallest integer which is greater than or equals to $x$. 
\end{proposition}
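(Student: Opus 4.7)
The plan is to bound $\sharp S(X,u)$ by first counting $S(X,u)\cap R$ using the two constraints imposed by the minimal-entry elements of $\mathrm{Aff}^+(X,u)$, then absorbing points on the horizontal saddle connections via a topological correction. Using the simple Jenkins--Strebel hypothesis, realize $X$ minus its horizontal saddle connections as a single open cylinder $R\cong(\mathbb{R}/W\mathbb{Z})\times(-H/2,H/2)$, so $\mathrm{mod}(X)=W/H$. Fix $h_1,h_2\in\mathrm{Aff}^+(X,u)$ with $D(h_1)=\begin{pmatrix}1 & b_0\\ 0 & 1\end{pmatrix}$ and with the lower-left entry of $D(h_2)$ equal to $\pm c_0$; in coordinates on $R$, $h_1(x,y)=(x+b_0y+c_1,y)\bmod W$ is a pure horizontal shear for some constant $c_1$.

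For any $a=(x_0,y_0)\in S(X,u)\cap R$ there exists $\sigma\in\mathrm{Ker}(D)$ with $h_1(a)=\sigma(a)$. By Lemma \ref{kernel}, in Case 1 ($\mathrm{sgn}(\mathrm{Ker}(D))=\{1\}$) each $\sigma$ is a horizontal translation by an element of $(W/N)\mathbb{Z}$ with $N=\sharp\mathrm{Ker}(D)$, so equating coordinates in the shear equation gives $b_0 y_0+c_1\equiv kW/N\pmod W$. Hence $y_0$ lies on an arithmetic progression of step $W/(Nb_0)$ inside $(-H/2,H/2)$, producing at most $\lceil b_0N/\mathrm{mod}(X)+1\rceil$ admissible horizontal lines. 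In Case 2, the sign-reversing generator of $\mathrm{Ker}(D)$ (cf.\ Lemma \ref{involution}) pins additional contributions onto $l_0$ but simultaneously halves the available translation subgroup, and a careful orbit count under the resulting dihedral action yields the refined bound $\lceil b_0N/(4\,\mathrm{mod}(X))+1/2\rceil$ on the admissible lines.

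On each admissible line $l_{y_0}$, the condition $h_2(a)\in\mathrm{Ker}(D)\cdot a$ forces $h_2(a)\in l_{y_0}$, since each element of $\mathrm{Ker}(D)$ preserves the horizontal foliation up to the involution $l_y\mapsto l_{-y}$. Hence admissible $x_0$ correspond to geometric intersection points of the closed geodesics $l_{y_0}$ and $h_2(l_{y_0})$ on $X$. The standard flat-surface formula for closed-geodesic intersections, applied to the lengths $W$ and $W\sqrt{\alpha^2+c_0^2}$ and angle $\arctan(c_0/\alpha)$ on a surface of area $WH$, gives exactly $c_0\,\mathrm{mod}(X)$ intersections per line, with an additional factor of $4$ in Case 2 coming from composing $h_2$ with the involution in $\mathrm{Ker}(D)$. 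Multiplying the Step 1 and Step 2 bounds produces the main term of the estimate.

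Points of $S(X,u)$ not in $R$ lie on the horizontal saddle connections or at critical points in $C(X,u)$; a Gauss--Bonnet / Euler-characteristic argument correcting for cone-point contributions to the intersection count produces the remaining term $-2g+2=\chi(X)$. The principal technical obstacle is Case 2: tracking how the sign-reversing generator of $\mathrm{Ker}(D)$ interacts with both $h_1$ and $h_2$ to produce exactly the factor $4$ in both the line count and the per-line count, while simultaneously pinning down the Euler-characteristic correction, requires delicate case-by-case bookkeeping and is the heart of the argument.
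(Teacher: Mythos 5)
Your overall strategy is the same as the paper's: use the minimal parabolic $\left[\begin{smallmatrix}1 & b_0\\ 0 & 1\end{smallmatrix}\right]$ to confine $S(X,u)$ to finitely many horizontal leaves, use an element with lower-left entry $c\neq 0$ to count points on each such leaf as intersections of the leaf with its affine image, and handle the critical graph by an Euler-characteristic computation. The only structural difference is that you work directly on $X$, whereas the paper descends to $Y=X/{\rm Ker}(D)$, does all the counting there (Lemmas \ref{subset}--\ref{interval}), and pulls back at the end via Riemann--Hurwitz; these are equivalent. However, there are two genuine gaps. First, your bookkeeping in Case 1 does not close. An arithmetic progression of step $W/(Nb_0)$ in an open interval of length $H$ has at most $\lceil b_0N/{\rm mod}(X)\rceil$ terms, not $\lceil b_0N/{\rm mod}(X)+1\rceil$; and the horizontal critical graph $L$ is not a harmless boundary correction contributing ``$-2g+2$'': the set $L\cap h_2^{-1}(L)$ contains ${\rm mod}(X)|c|-2g+2$ points (this is the paper's computation $W'c/H'-\frac{1}{2}\sum_{p'}{\rm ord}(p')$ using $\sum{\rm ord}=4g'-4$). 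The ``$+1$'' inside the ceiling in the stated bound exists precisely to absorb this extra ${\rm mod}(X)c_0$ coming from the saddle connections; you instead spend it on an overcount of the leaves and then budget only $-2g+2\leq 0$ for the critical graph, so as written your total exceeds the claimed bound by ${\rm mod}(X)c_0$. The pieces are all present, but they must be reassembled as (number of leaves $\leq\lceil b_0N/{\rm mod}(X)\rceil$) $\times$ (${\rm mod}(X)c_0$ per leaf) $+$ (${\rm mod}(X)c_0-2g+2$ on the critical graph).

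Second, Case 2 is not actually proved: you defer the ``delicate case-by-case bookkeeping'' that is the entire content of the second inequality. The specific input you are missing is the paper's Lemma \ref{interval}: when $sgn({\rm Ker}(D))=\{\pm 1\}$, the involution has two fixed points on $l_0$ (Lemma \ref{involution}), these project to two order-one poles of $q^\prime$ sitting on one horizontal side of the rectangle $R^\prime$ at distance exactly $W^\prime/2$ apart, and since $h_{A_0}^\prime$ must permute them, its translation part $\xi$ is forced to be $0$ or $\tfrac{1}{2}$. That rigidity is what produces the constant $\lceil\frac{b_0\,\sharp{\rm Ker}(D)}{4\,{\rm mod}(X)}+\frac{1}{2}\rceil-1$ for the leaf count; without it, ``a careful orbit count under the resulting dihedral action'' does not determine the constant. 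You should also address the sign ambiguity of the derivative (an affine map with $D(h)=\left[\begin{smallmatrix}1 & b_0\\ 0 & 1\end{smallmatrix}\right]$ may act as $z\mapsto -z-b_0y+c_1$ on the cylinder, in which case only one leaf survives -- the paper treats this explicitly).
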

By lemmas \ref{kernel} and \ref{involution}, $(Y, u^\prime)$ is also constructed  from some rectangle  $R^\prime$ whose vertical sides are glued by translation and horizontal sides correspond to the union of all horizontal saddle connections of $(Y, u^\prime)$.
Denote by $W^\prime$ and $H^\prime$ the width and height of  $R^\prime$, respectively.
If $sgn({\rm Ker}(D))=\{1\}$, we have  $W^\prime = \frac{W}{\sharp{\rm Ker}(D)}$ and $H^\prime =H$.
If $sgn({\rm Ker}(D))=\{\pm 1\}$, we have $W^\prime=\frac{2W}{\sharp{\rm Ker}(D)}$ and $H^\prime=\frac{H}{2}$.
Note that, if $sgn({\rm Ker}(D))=\{\pm 1\}$, the closed curve $l_0$ in $X$ projects to one of horizontal sides of  $R^\prime$.
The horizontal side has two critical points which are poles of $q^\prime$ of order 1. 
We fix a vertical 
open interval $I$ in $R^\prime$ which connects two horizontal sides of  $R^\prime$.
Let $l^\prime_w$ be the horizontal closed geodesic in $Y$ passing through $w \in I$
and $L^\prime$ be the union of all horizontal saddle connections of $Y$. 
We set $h_{A_0}^\prime=A_0= {\tiny \left[\left(
  \begin{array}{cccc}
    1 & b_0 \\
    0 & 1 
  \end{array}
  \right)
  \right]} \in \Gamma(X, u)$,
$I_0=\{w \in I: l_w^\prime$ is pointwise fixed by $h_{A_0}^\prime \}$,
and 
\begin{center}
$\displaystyle {\rm Cross}(A)=\left(\bigcup_{w \in I_0} \left( l_w^\prime \cap h_A^\prime(l_w^\prime) \right) \right)\cup \left( L^\prime \cap h_A^\prime(L^\prime)\right)$
\end{center}
for $A= {\tiny \left[\left(
  \begin{array}{cccc}
    a & b \\
    c & d 
  \end{array}
  \right)
  \right]} \in \Gamma(X, u)$ with $c \not =0$.
\begin{lemma}\label{subset}
The sets $\varphi(C(X, u)), C(Y, u^\prime), \varphi(B(X))$ and $\varphi(S(X, u))$ are contained in ${\rm Cross}(A)$. 
\end{lemma}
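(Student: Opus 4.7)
The plan is to handle the four sets $\varphi(C(X,u))$, $C(Y,u^\prime)$, $\varphi(B(X))$, $\varphi(S(X,u))$ in two groups: the first three share a uniform argument using only the piece $L^\prime\cap h_A^\prime(L^\prime)$ of $\mathrm{Cross}(A)$, while $\varphi(S(X,u))$ genuinely requires the horizontal circles $l_w^\prime$ with $w\in I_0$.

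For the first three sets I would establish two facts. First, each one is contained in $L^\prime$: for $C(Y,u^\prime)$ this is tautological, since $L^\prime$ is the union of horizontal saddle connections and critical points of $(Y,u^\prime)$ are their endpoints; for $\varphi(C(X,u))$, every critical point of $(X,u)$ lies on the horizontal boundary of the (single) cylinder, since $\theta=0$ is a simple Jenkins-Strebel direction, and this boundary maps under $\varphi$ into the horizontal boundary of the cylinder decomposition of $(Y,u^\prime)$, which is exactly $L^\prime$; for $\varphi(B(X))$ I would invoke Lemmas \ref{kernel} and \ref{involution}, which force the branch points to be fixed points of the involutions in $\mathrm{Ker}(D)$ (so lying on $l_0$) in the case $\mathrm{sgn}(\mathrm{Ker}(D))=\{\pm 1\}$ and leave $B(X)$ empty otherwise; in the former case $l_0$ projects to a horizontal side of $R^\prime$, hence into $L^\prime$. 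Second, each of the three sets is invariant under the action of every $h_A^\prime$ with $A\in\Gamma(X,u)$: affine maps preserve critical sets, and $\mathrm{Ker}(D)$ is normal in $\mathrm{Aff}^+(X,u)$, so conjugation preserves the locus of points with nontrivial $\mathrm{Ker}(D)$-stabilizer, i.e.\ $B(X)$. Combining these two facts puts each of the three sets in $L^\prime\cap h_A^\prime(L^\prime)\subset\mathrm{Cross}(A)$.

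The set $\varphi(S(X,u))$ is by definition the set of $b\in Y$ fixed by every $h_B^\prime$ with $B\in\Gamma(X,u)$; in particular such a $b$ is fixed simultaneously by $h_{A_0}^\prime$ and $h_A^\prime$. To use this I would compute $\mathrm{Fix}(h_{A_0}^\prime)$ on $Y$ cylinder by cylinder: the horizontal direction is Jenkins-Strebel for $(Y,u^\prime)$, and on each horizontal cylinder $h_{A_0}^\prime$ acts as the Euclidean shear with derivative $A_0$, namely $(x,y)\mapsto(x+b_0 y,y)$, whose fixed points are the horizontal circles at heights $y$ with $b_0 y$ an integer multiple of the circumference. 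These heights always include $y=0$ (the boundary, in $L^\prime$) and possibly finitely many interior heights, which are exactly the elements of $I_0$. Thus $\mathrm{Fix}(h_{A_0}^\prime)=L^\prime\cup\bigcup_{w\in I_0}l_w^\prime$. Combining fixedness by $h_A^\prime$ with membership in one of these two pieces gives $b\in L^\prime\cap h_A^\prime(L^\prime)$ or $b\in l_w^\prime\cap h_A^\prime(l_w^\prime)$ for some $w\in I_0$, and in either case $b\in\mathrm{Cross}(A)$.

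The step I expect to be the most delicate is the identification of $\mathrm{Fix}(h_{A_0}^\prime)$ with $L^\prime\cup\bigcup_{w\in I_0}l_w^\prime$: one must handle the two cases of Lemma \ref{kernel} which change the aspect ratio of $R^\prime$ compared to $R$, and verify that as $w$ ranges over the vertical interval $I$ the circles $l_w^\prime$ really sweep out every horizontal closed geodesic of $(Y,u^\prime)$, so that $I_0$ captures every pointwise-fixed one. The remaining steps are essentially invariance bookkeeping together with the descent of the Jenkins-Strebel cylinder structure through $\varphi$.
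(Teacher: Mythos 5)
Your proof follows the same route as the paper's: for $\varphi(C(X,u))$, $C(Y,u^\prime)$ and $\varphi(B(X))$ the paper likewise combines $h_A^\prime$-invariance with containment in $L^\prime$ to conclude membership in $L^\prime\cap h_A^\prime(L^\prime)$, and for $\varphi(S(X,u))$ it simply declares the containment ``clear from the definition,'' so your fixed-point analysis of $h_{A_0}^\prime$ is a welcome expansion of a step the paper skips. Note that only the inclusion ${\rm Fix}(h_{A_0}^\prime)\subset L^\prime\cup\bigcup_{w\in I_0}l_w^\prime$ is needed (not the equality you assert, and not the precise normal form of $h_{A_0}^\prime$, which in general carries an extra translation $W^\prime\xi$ and a possible sign): a point fixed by $h_{A_0}^\prime$ and lying off $L^\prime$ sits on a unique closed horizontal leaf, that leaf is carried to itself, and a rotation of a circle with a fixed point is the identity, so the leaf is pointwise fixed and its parameter lies in $I_0$.

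One intermediate claim is wrong, though it does not damage the conclusion. It is not true that $B(X)=\emptyset$ when ${\rm sgn}({\rm Ker}(D))=\{1\}$, nor that in the other case all branch points lie on $l_0$. A nontrivial translation $z\mapsto z+c$ in ${\rm Ker}(D)$ indeed has no fixed points in the open cylinder or in the interiors of the horizontal saddle connections, but it can fix a cone point, where it acts as a nontrivial rotation of the cone permuting the sheets; for instance, if $q$ has a single zero, every element of ${\rm Ker}(D)$ fixes it, so that zero is a branch point. Similarly, an involution $z\mapsto -z+d$ may fix points of $L_0\cup L_1$ in addition to the two fixed points on $l_0$ guaranteed by Lemma \ref{involution}. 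The correct statement is $B(X)\subset C(X,u)\cup L_0\cup L_1\cup l_0$, with the $l_0$ contribution occurring only when involutions are present; since $\varphi$ sends $L_0\cup L_1$ into a horizontal side of $R^\prime$ and, in the involution case, sends $l_0$ to the other horizontal side, one still obtains $\varphi(B(X))\subset L^\prime$, which is all the lemma uses. With that repair your argument is complete and agrees with the paper's.
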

\begin{proof}
Let $h_A \in {\rm Aff}^+(X, u)$ such that $D(h_A)=A$.
Since $h_A(C(X, u))=C(X, u)$, we have $h_A^\prime(\varphi(C(X, u)))=\varphi(C(X, u))$.
And $h_A^\prime(C(Y, u^\prime))=C(Y, u^\prime)$ because $h_A^\prime \in {\rm Aff}^+(Y, u^\prime)$.
Also, $h_A(B(X))=B(X)$ implies that $h_A^\prime(\varphi(B(X))=\varphi(B(X)))$.
As $\varphi(C(X, u)), C(Y, u^\prime)$ and $\varphi(B(X))$ are subsets of $L^\prime$,  they are contained in $L^\prime \cap h_A^\prime(L^\prime)$.
 Therefore, $\varphi(C(X, u)), C(Y, u^\prime), \varphi(B(X)) \subset {\rm Cross}(A)$.
 By the definition of $S(X, u)$, it is clear that $\varphi(S(X, u)) \subset {\rm Cross}(A)$.
\end{proof}
\begin{lemma}\label{intersection number}
For all $w \in I$,
\begin{eqnarray}
\sharp(l_w^\prime \cap h_A^\prime(l_w^\prime))&=& \left\{
\begin{array}{ll}
\frac{{\rm mod}(X)}{\sharp {\rm Ker}(D)}  |c|  & \left( sgn({\rm Ker}(D))=\{1\} \right),\\[1em]
\frac{4{\rm mod}(X)}{\sharp {\rm Ker}(D)}  |c| &\left( sgn({\rm Ker}(D))=\{\pm 1\} \right).
\end{array}
\right.
\nonumber 
\end{eqnarray}
And,
\begin{eqnarray}
\sharp\left( L^\prime \cap h_A^\prime(L^\prime)\right)&=& \left\{
\begin{array}{ll}
\frac{{\rm mod}(X)}{\sharp {\rm Ker}(D)}  |c| -2g^\prime+2 & \left( sgn({\rm Ker}(D))=\{1\} \right),\\[1em]
\frac{4{\rm mod}(X)}{\sharp {\rm Ker}(D)}  |c| -2g^\prime+2&\left( sgn({\rm Ker}(D))=\{\pm 1\} \right).
\end{array}
\right.
\nonumber
\end{eqnarray}
Here, $g^\prime$ is the genus of $Y$.
\end{lemma}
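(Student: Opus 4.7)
The plan is to work in the rectangle representation of $(Y, u^\prime)$ and count intersections directly, edge by edge. From the descriptions of $W^\prime$ and $H^\prime$ given just before Proposition \ref{number2}, we have ${\rm mod}(Y) = W^\prime/H^\prime = {\rm mod}(X)/\sharp {\rm Ker}(D)$ when $\mathrm{sgn}({\rm Ker}(D)) = \{1\}$, and ${\rm mod}(Y) = 4\,{\rm mod}(X)/\sharp {\rm Ker}(D)$ when $\mathrm{sgn}({\rm Ker}(D)) = \{\pm 1\}$. It therefore suffices to establish $\sharp(l_w^\prime \cap h_A^\prime(l_w^\prime)) = |c|\cdot {\rm mod}(Y)$ and $\sharp(L^\prime \cap h_A^\prime(L^\prime)) = |c|\cdot {\rm mod}(Y) - 2g^\prime + 2$ and substitute.

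For the first identity, $l_w^\prime$ is a horizontal closed geodesic in the single cylinder of $(Y, u^\prime)$ of length $W^\prime$, and $h_A^\prime(l_w^\prime)$ is a closed geodesic in direction $(a, c)$ with total vertical displacement $|c| W^\prime$. Each traversal of a vertical period $H^\prime$ contributes exactly one transverse intersection with $l_w^\prime$, yielding $|c| W^\prime/H^\prime = |c|\cdot {\rm mod}(Y)$ intersection points.

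For the second identity, I split the intersection into vertex and interior contributions. Because $h_A^\prime \in {\rm Aff}^+(Y, u^\prime)$ permutes $C(Y, u^\prime) = V(L^\prime)$, the two graphs share their vertex set, giving $V := \sharp V(L^\prime)$ common points. For each horizontal saddle connection $e \in L^\prime$ of length $\ell_e$, the image $h_A^\prime(e)$ is a saddle connection of direction $(a, c)$; its endpoints lie on $L^\prime$, so its vertical displacement $c\ell_e$ is an integer multiple $k_e H^\prime$ of $H^\prime$, whence $h_A^\prime(e)$ meets $L^\prime$ in exactly $|k_e| - 1$ interior points, none of them critical (saddle connections contain no interior critical points). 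Summing over the $E := \sharp E(L^\prime)$ horizontal saddle connections gives
\[
\sum_{e}(|k_e| - 1) = \frac{|c|}{H^\prime}\sum_e \ell_e - E = \frac{|c| W^\prime}{H^\prime} - E = |c|\cdot {\rm mod}(Y) - E.
\]
A CW decomposition of $Y = R^\prime/\sim$ (one $2$-cell, $E+1$ one-cells including the vertical identification loop, and $V$ zero-cells) yields $\chi(Y) = V - E = 2 - 2g^\prime$, so the total intersection number is $V + |c|\cdot {\rm mod}(Y) - E = |c|\cdot {\rm mod}(Y) - 2g^\prime + 2$, as desired.

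The main obstacle will be the case $\mathrm{sgn}({\rm Ker}(D)) = \{\pm 1\}$: the horizontal boundary of $R^\prime$ then carries the order-$1$ poles of $q^\prime$ at fixed points of the horizontal involution, and one must verify that the per-edge wrap count $|k_e| - 1$ and the CW computation of $\chi(L^\prime)$ are correct in the unfolding that treats each such pole as a branch point. The integrality of each $k_e$ must also be argued uniformly from the condition that endpoints of $h_A^\prime(e)$ are critical points lying on $L^\prime$; this is clean in the $\{1\}$ case but needs care in the $\{\pm 1\}$ case when an edge $h_A^\prime(e)$ enters and exits $R^\prime$ through involution-fixed points.
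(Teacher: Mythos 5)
Your argument is correct and follows essentially the same route as the paper: both counts come from measuring the vertical extent of the image curves against the height $H^\prime$ of the single horizontal cylinder, giving $|c|W^\prime/H^\prime$ crossings for $l_w^\prime$ and $|c|W^\prime/H^\prime - E + V$ points for $L^\prime$. The only (cosmetic) difference is in evaluating the constant term of the second identity: the paper counts each critical point $p^\prime$ with multiplicity $\arg(p^\prime)/\pi$ on the horizontal sides of $R^\prime$ and invokes $\sum_{p^\prime}{\rm ord}(p^\prime)=4g^\prime-4$, while you compute $V-E=\chi(Y)=2-2g^\prime$ from a CW decomposition; since $\sum_{p^\prime}\arg(p^\prime)/\pi=2E$ these are the same computation, and the $sgn({\rm Ker}(D))=\{\pm 1\}$ subtleties you flag are absorbed in both versions by the order-one poles having cone angle $\pi$ and hence appearing once on the horizontal sides.
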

\begin{proof}
We identify $l_w^\prime$ with the vector $v= {\tiny \left(
  \begin{array}{c}
    W^\prime\\
    0
  \end{array}
  \right)
  }$.
Then $h_A(l_w^\prime)$ is identified with 
$A v = {\tiny \left(
  \begin{array}{c}
    W^\prime a\\
    W^\prime c
  \end{array}
  \right)
  }$.
The closed curve $h_A(l_w^\prime)$ pass through $l_w^\prime$ whenever the height moves $H^\prime$.
Therefore, 
\begin{eqnarray}
\sharp(l_w^\prime \cap h_A^\prime(l_w^\prime))=W^\prime c/H^\prime= \left\{
\begin{array}{ll}
\frac{{\rm mod}(X)}{\sharp {\rm Ker}(D)}  |c|  & \left( sgn({\rm Ker}(D))=\{1\} \right),\\[1em]
\frac{4{\rm mod}(X)}{\sharp {\rm Ker}(D)}  |c| &\left( sgn({\rm Ker}(D))=\{\pm 1\} \right).
\end{array}
\right.
\nonumber 
\end{eqnarray}
For each $p^\prime \in C(Y, u^\prime)$, let ${\rm arg}(p^\prime)$ be the angle around $p^\prime$.
Then ${\rm arg}(p^\prime)=n \pi$ for some $n \in \mathbb{N}$ and $p^\prime$ appears $n$ times on the horizontal sides of the rectangle $R^\prime$.
Hence, 
\begin{eqnarray}
\displaystyle
\sharp(\left( L^\prime \cap h_A^\prime(L^\prime)\right))&=&\frac{1}{2}\left(2\cdot W^\prime c/H^\prime -  \sum_{p^\prime \in C(Y, u^\prime)} \frac{{\rm arg}(p^\prime)}{\pi} \right)+\sharp C(Y, u^\prime) \nonumber\\
&=& W^\prime c/H^\prime - \frac{1}{2}\sum_{p^\prime \in C(Y, u^\prime)} \left(\frac{{\rm arg}(p^\prime)}{\pi}-2 \right) \nonumber\\
&=& W^\prime c/H^\prime - \frac{1}{2}\sum_{p^\prime \in C(Y, u^\prime)} {\rm ord}(p^\prime) \nonumber\\
&=& \left\{
\begin{array}{ll}
\frac{{\rm mod}(X)}{\sharp {\rm Ker}(D)}  |c| -2g^\prime+2 & \left( sgn({\rm Ker}(D))=\{1\} \right),\\[1em]
\frac{4{\rm mod}(X)}{\sharp {\rm Ker}(D)}  |c| -2g^\prime+2&\left( sgn({\rm Ker}(D))=\{\pm 1\} \right).
\end{array}
\right.
\nonumber
\end{eqnarray}
Recall that ${\rm ord}(w)$ is the order of $w$ with respect to $q^\prime$.
\end{proof}
\begin{lemma}\label{interval}
\begin{eqnarray}
\sharp I_0 \leq \left\{
\begin{array}{ll}
{\lceil} \frac{b_0 \sharp {\rm Ker}(D)}{{\rm mod}(X)}{\rceil}& \left( sgn({\rm Ker}(D))=\{1\} \right),
\\[1em]
{\lceil} \frac{b_0 \sharp {\rm Ker}(D)}{4{\rm mod}(X)}+\frac{1}{2} {\rceil}-1& \left( sgn({\rm Ker}(D))=\{\pm 1\} \right).
\end{array}
\right.
\nonumber 
\end{eqnarray}
\end{lemma}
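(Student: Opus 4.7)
The plan is to describe $h_{A_0}^\prime$ explicitly as a horizontal shear in coordinates on $R^\prime$ and then reduce $\sharp I_0$ to counting integer solutions of a single linear congruence. Since $h_{A_0}^\prime \in {\rm Aff}^+(Y, u^\prime)$ has derivative $A_0$ (the unipotent upper-triangular matrix with upper-right entry $b_0$), in the natural coordinates of $R^\prime$ it must take the form $(x,y) \mapsto (x + b_0 y + c,\, y)$ modulo the vertical-side identifications of $R^\prime$, for some real constant $c$. Consequently, the horizontal closed geodesic $l_w^\prime$ through $w = (x_0, y) \in I$ is pointwise fixed by $h_{A_0}^\prime$ exactly when $b_0 y + c \equiv 0 \pmod{W^\prime}$. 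The admissible $y$ form an arithmetic progression with common difference $W^\prime/b_0$, and $\sharp I_0$ counts the terms of this progression lying in the open range of $y$-values swept by $I$.

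When $sgn({\rm Ker}(D)) = \{1\}$, one has $H^\prime = H$ and $W^\prime = W/\sharp{\rm Ker}(D)$, so $I$ covers an open interval of length $H$. The elementary upper bound for the number of terms of an arithmetic progression of spacing $d$ in an open interval of length $\ell$ is $\lceil \ell/d \rceil$, so I obtain $\lceil Hb_0/W^\prime \rceil = \lceil b_0\,\sharp{\rm Ker}(D)/{\rm mod}(X)\rceil$, which is the first asserted bound. No information on $c$ is needed in this case.

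The case $sgn({\rm Ker}(D)) = \{\pm 1\}$ contains the main subtlety, which explains the extra $+\tfrac{1}{2}$ inside the ceiling. Here $W^\prime = 2W/\sharp{\rm Ker}(D)$, $H^\prime = H/2$, and the horizontal side $y = 0$ of $R^\prime$ is the image of $l_0$, carrying the two cone points arising from the fixed points of the involution in ${\rm Ker}(D)$ (Lemma \ref{involution}); a direct computation shows these cone points are antipodal on the circle of circumference $W^\prime$, at $x = 0$ and $x = W^\prime/2$ modulo $W^\prime$. The restriction of $h_{A_0}^\prime$ to $y = 0$ is the rotation $x \mapsto x + c$, and since $h_{A_0}^\prime \in {\rm Aff}^+(Y, u^\prime)$ must permute the cone point set, this rotation must send $\{0, W^\prime/2\}$ to itself, forcing $c \equiv 0$ or $c \equiv W^\prime/2 \pmod{W^\prime}$. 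Writing $L = b_0\,\sharp{\rm Ker}(D)/(4\,{\rm mod}(X))$, the first sub-case gives solutions $y = nW^\prime/b_0$ with $n \in \mathbb{Z} \cap (0, L)$, hence at most $\lceil L \rceil - 1$ of them; the second sub-case gives $y = (n - \tfrac{1}{2})W^\prime/b_0$ with $n \in \mathbb{Z} \cap (\tfrac{1}{2}, L + \tfrac{1}{2})$, hence at most $\lceil L + \tfrac{1}{2} \rceil - 1$. Since $\lceil L + \tfrac{1}{2}\rceil \geq \lceil L \rceil$, the latter serves as a uniform bound for both sub-cases and yields the asserted estimate. The central obstacle—and what prevents settling for the generic bound $\lceil L \rceil$, which is strictly weaker than $\lceil L + \tfrac{1}{2}\rceil - 1$ when $L$ lies in the upper half of an integer interval—is isolating the discrete constraint $c \in \{0, W^\prime/2\}$ from the cone-point structure produced by the involution; once this is in hand, the rest is elementary integer counting.
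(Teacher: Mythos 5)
Your argument is correct and follows essentially the same route as the paper's: normalize $h_{A_0}^\prime$ as a horizontal shear $(x,y)\mapsto(x+b_0y+c,\,y)$ on $R^\prime$, count the resulting arithmetic progression of pointwise-fixed heights, and in the $sgn({\rm Ker}(D))=\{\pm 1\}$ case use the two order-one poles of $q^\prime$ at distance $W^\prime/2$ on the image of $l_0$ to force $c\in\{0,W^\prime/2\}$ modulo $W^\prime$ (the paper's $\xi\in\{0,\tfrac{1}{2}\}$), with your counting simply making explicit what the paper dismisses as ``by computation''. The one case you omit is that $D$ takes values in ${\rm PSL}(2,\mathbb{R})$, so the linear part of $h_{A_0}^\prime$ could be $-A_0$ rather than $A_0$; the paper handles this separately (``if the representation holds by minus, $\sharp I_0=1$''), and since that case only shrinks $I_0$ it does not affect the bound.
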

\begin{proof}
On the rectangle $R^\prime$, we can represent $h_{A_0}^\prime$ as the following form:
\begin{center}
$h_{A_0}\left(
  \begin{array}{cccc}
    x \\
    y 
  \end{array}
  \right)= \pm \left(
  \begin{array}{cccc}
    1 & b_0 \\
    0 & 1 
  \end{array}
  \right)
 \left(
  \begin{array}{cccc}
    x \\
    y 
  \end{array}
  \right) +
  \left(
  \begin{array}{cccc}
    W^\prime \xi  \\
    0 
  \end{array}
  \right)$
\end{center}
for some $0 \leq \xi<1$.
If the representation holds by minus, $\sharp I_0=1$.
If it holds by plus and $sgn({\rm Ker}(D))=\{1\}$, 
 by computation, we have
\begin{center}
$\sharp I_0 \leq 
{\lceil} \frac{b_0 \sharp {\rm Ker}(D)}{{\rm mod}(X)} {\rceil}$.
\end{center}
If it holds by plus and $sgn({\rm Ker}(D))=\{\pm 1\}$, 
$h_{A_0}^\prime$ maps each horizontal sides of $R^\prime$ to itself.
And one of the sides is the image of the horizontal closed geodesic $l_0$ and it has exactly two points of $\varphi(B(X))$.
Moreover, the length of horizontal saddle connection on $Y$ which connects the two points is $\frac{W^\prime}{2}$.
Hence, $\xi=0$ or $\frac{1}{2}$. 
By computation, we have
\begin{center}
$\sharp I_0 \leq 
{\lceil} \frac{b_0 \sharp {\rm Ker}(D)}{4{\rm mod}(X)}+\frac{1}{2} {\rceil}-1$.
\end{center}
\end{proof}
\begin{proof}[Proof of proposition \ref{number2}]
By lemma \ref{intersection number} and lemma \ref{interval}, 
we have
\begin{eqnarray}
\sharp {\rm Cross}(A) \leq \left\{
\begin{array}{ll}
\frac{{\rm mod}(X)}{\sharp {\rm Ker}(D)} |c| \left({\lceil} \frac{b_0 \sharp {\rm Ker}(D)}{{\rm mod}(X)}+1{\rceil}\right)-2g^\prime+2& \left( sgn({\rm Ker}(D))=\{1\} \right),
\\[1em]
\frac{4{\rm mod}(X)}{\sharp {\rm Ker}(D)}|c|\left({\lceil} \frac{b_0 \sharp {\rm Ker}(D)}{4{\rm mod}(X)}+\frac{1}{2} {\rceil}\right)-2g^\prime+2& \left( sgn({\rm Ker}(D))=\{\pm 1\} \right).
\end{array}
\right.
\nonumber 
\end{eqnarray}
By lemma \ref{subset},  
\begin{center}
$\sharp S(X, u) \leq \sharp\varphi^{-1}({\rm Cross}(A))=\left( \sharp{\rm Cross}(A)- \sharp \varphi\left(B(X)\right)\right)\cdot \sharp {\rm Ker}(D) +\sharp B(X)$.
\end{center}
Therefore, if ${\rm Ker}(D)=\{1\}$,
\begin{eqnarray}
\sharp S(X, u)&\leq& {\rm mod}(X)|c| \left({\lceil} \frac{b_0 \sharp {\rm Ker}(D)}{{\rm mod}(X)}+1{\rceil}\right)
\nonumber \\
&&+\left(-2g^\prime+2-\sharp \varphi\left(B(X)\right)\right)\cdot \sharp {\rm Ker}(D)+\sharp B(X).
\nonumber
\end{eqnarray}
And, if ${\rm Ker}(D)=\{\pm 1\}$,
\begin{eqnarray}
\sharp S(X, u)&\leq& 4{\rm mod}(X)|c| \left({\lceil} \frac{b_0 \sharp {\rm Ker}(D)}{4{\rm mod}(X)}+\frac{1}{2}{\rceil}\right)
\nonumber \\
&&+\left(-2g^\prime+2-\sharp \varphi\left(B(X)\right)\right)\cdot \sharp {\rm Ker}(D)+\sharp B(X).
\nonumber
\end{eqnarray}
Moreover,
\begin{eqnarray}
&&\left(-2g^\prime+2-\sharp \varphi\left(B(X)\right)\right)\cdot \sharp {\rm Ker}(D)+\sharp B(X) \nonumber\\
&=& \left(-2g^\prime+2\right) \cdot \sharp{\rm Ker}(D)- \sum_{w \in \varphi(B(X))}\left(\sharp {\rm Ker}(D)-\sharp \varphi^{-1}(w) \right)\nonumber\\
&=& \left(-2g^\prime+2\right) \cdot \sharp{\rm Ker}(D)- \sum_{w \in \varphi(B(X))} \sum_{z \in \varphi^{-1}(w)}\left(e_z-1 \right)\nonumber\\
&=& \left(-2g^\prime+2\right) \cdot \sharp{\rm Ker}(D)- \sum_{z \in B(X)}\left(e_z-1 \right)\nonumber\\
&=&-2g+2. \nonumber
\end{eqnarray}
Here, $e_z$ is of the ramification index of $\varphi$ at $z$ and the last equation is the Riemann-Hurwitz formula.
Since we can take all $A= {\tiny \left[\left(
  \begin{array}{cccc}
    a & b \\
    c & d 
  \end{array}
  \right)
  \right]} \in \Gamma(X, u)$ with $c \not =0$, we obtain the claim.
\end{proof}
\begin{remark}
Proposition \ref{number2} is useful to estimate the number of holomorphic sections if we construct an example of flat surface with a simple Jenkins-Strebel direction.
To estimate the number of holomorphic sections, we only need estimating $\sharp {\rm Ker}(D)$ and finding some element $A\in \Gamma(X, u)$ with $c \not =0$.
It is easy to see that
${\lceil} x+1{\rceil} \leq 4{\lceil} \frac{x}{4}+\frac{1}{2}{\rceil}$
 for all $x \in \mathbb{R}$.
Hence, the inequality
\begin{center}
$\sharp S(X, u) \leq 4{\rm mod}(X)c_0 \lceil$ $\frac{b_0 \sharp {\rm Ker}(D)}{4{\rm mod}(X)}+\frac{1}{2} \rceil -2g+2$
\end{center}
always holds.
Moreover, we have the following two lemmas.
\end{remark}
Assume that $\theta=0$ is a simple Jenkins-Strebel direction.
We can estimate $\sharp {\rm Ker}(D)$ by the following way.
Let $\theta^\prime (\not=0)$ be a Jenkins-Strebel direction.
Then the rectangle $R$ is decomposed into finitely many parallelograms $R_1,R_2, \cdots, R_k$ by the $\theta^\prime$-direction saddle connections.
We label the parallelograms with respect to the rule that 
if $R_i$ and $R_j$ belong the same or congruent cylinders of Jenkins-Strebel direction $\theta^\prime$, then their labels are same.
Assume that the labels which we use are $a_1, a_2, \cdots, a_l$.
Let $a$ be the word in the free group $\left< a_1, a_2, \cdots, a_l \right>$ which is obtained by reading the labels from left to right on $R$.
We set 
\begin{center}
$n= \max \{n \in \mathbb{N}: a=b^n$ for some $b \in \left< a_1, a_2, \cdots, a_l \right>\}$.
\end{center}

\begin{lemma}\label{sgn}
$\sharp sgn^{-1}(\{1\})\leq n$.
\end{lemma}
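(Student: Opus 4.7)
The plan is to interpret the elements of $sgn^{-1}(\{1\})$ as horizontal translations of the rectangle $R$ which preserve the $\theta^\prime$-direction cylinder decomposition, and then to translate that symmetry into a literal periodicity of the word $a$.

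First I would set $m = \sharp sgn^{-1}(\{1\})$ and identify $sgn^{-1}(\{1\})$ as a finite cyclic translation group. By Lemma \ref{kernel}, every element of ${\rm Ker}(D)$ with $sgn=+1$ has the form $z \mapsto z+c$ in the chart $z: R \rightarrow (0,W)\times(-H/2, H/2)$, and the identification $z \sim z+W$ forces these translations to form a finite subgroup of $\mathbb{R}/W\mathbb{Z}$. Hence $sgn^{-1}(\{1\})$ is cyclic of order $m$, generated by the translation $\tau : z \mapsto z+W/m$.

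Next I would show that $\tau$ permutes the parallelograms $R_1,\ldots,R_k$ while preserving their labels. Since $\tau$ is a pure translation in the chart of $(X,u)$, it is an isometry of the flat structure $u$, so it carries $\theta^\prime$-saddle connections to $\theta^\prime$-saddle connections and thereby permutes the parallelograms $\{R_i\}$; moreover it sends each $\theta^\prime$-cylinder to a congruent $\theta^\prime$-cylinder. By the labeling convention (parallelograms in the same or congruent cylinders share a label), $\tau$ preserves labels.

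Then I would convert this geometric symmetry into the assertion $a = b^m$. Because $\theta=0$ is a simple Jenkins-Strebel direction, the critical set of $(X,u)$ lies on the horizontal sides of $R$, so every $\theta^\prime$-saddle connection has its two endpoints on those horizontal sides. The set of lower endpoints is $\tau$-invariant, hence invariant under horizontal shift by $W/m$; in particular $W/m$ is itself a lower endpoint, so the total number $N$ of parallelograms is a multiple of $m$ and the ordered left-to-right list of parallelograms is cyclically invariant under shift by $N/m$ positions. Since labels are preserved, the length-$N$ word $a$ has period $N/m$, and thus $a=b^m$ with $b$ the initial length-$N/m$ subword. By the defining maximality of $n$ this yields $m \leq n$, which is the desired inequality.

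The main obstacle I expect is verifying cleanly that the geometric translation by $W/m$ corresponds to a combinatorial cyclic shift by $N/m$ positions of the left-to-right label sequence; the crux is that $\tau$-invariance of the lower endpoints of $\theta^\prime$-saddle connections forces $W/m$ to be a sum of widths of an initial block of parallelograms, so that the shift is combinatorial and not merely geometric. Once that bookkeeping is carried out in the chart of $R$, the conclusion $\sharp sgn^{-1}(\{1\}) \leq n$ follows at once.
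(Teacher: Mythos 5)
Your proposal is correct and follows essentially the same route as the paper: the paper likewise takes the generator $h$ of the (cyclic) translation subgroup $sgn^{-1}(\{1\})$, observes that as a translation it permutes the parallelograms $R_1,\dots,R_k$ while preserving labels, and concludes that $a$ is a power $b^{\sharp sgn^{-1}(\{1\})}$, whence the bound by $n$. Your extra bookkeeping about the shift by $W/m$ aligning with the division points only makes explicit what the paper leaves implicit.
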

\begin{proof}
Let $h \in sgn^{-1}(\{1\})$ such that $sgn^{-1}(\{1\})=\left< h \right>$.
Since $h$ is a translation, if $h$ sends $R_1$ to some $R_m$, then the labels of $R_i$ and $R_{i+m}$ are always same. 
This implies that $a=b^{\frac{k}{m}}$ for some $b \in \left< a_1, a_2, \cdots, a_l \right>$.
Hence, $\sharp sgn^{-1}(\{1\})=\frac{k}{m}\leq n$.
\end{proof}
\begin{lemma}\label{separate}
If a horizontal closed geodesic is separating curve of $X$,
then
\begin{center} 
${\lceil} \frac{b_0 \sharp {\rm Ker}(D)}{4{\rm mod}(X)}+\frac{1}{2}{\rceil}\leq 1$.
\end{center}
\end{lemma}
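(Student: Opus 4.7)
The plan is to reduce the ceiling inequality to the algebraic estimate $b_0 \sharp {\rm Ker}(D) \leq 2\,{\rm mod}(X)$, because $\lceil x + \tfrac{1}{2}\rceil \leq 1$ holds if and only if $x \leq \tfrac{1}{2}$. I will derive this estimate by decoding what the separating hypothesis on $l_0$ tells us about both $b_0$ and the structure of ${\rm Ker}(D)$ on the cylinder.

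Since $\theta = 0$ is a simple Jenkins--Strebel direction, $X$ is a single horizontal cylinder built from $R = (0,W) \times (-H/2, H/2)$. The separating hypothesis forces the top saddle connections to be identified only among themselves, and likewise the bottom among themselves, with no top-to-bottom identifications. Let $m_1$ (respectively $m_2$) denote the order of the cyclic group of horizontal translations of $\mathbb{R}/W\mathbb{Z}$ preserving the identification pattern on the top (respectively bottom) side of the cylinder, so $W/m_i$ is the smallest positive shift preserving the $i$-th pattern. First I would show that any element of ${\rm Ker}(D)$ has trivial vertical translation component: a nonzero vertical shift would push interior points of the cylinder across the horizontal boundary, which is impossible in the separating case because no identification links top to bottom. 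Hence every element of ${\rm Ker}(D)$ has the form $(x,y)\mapsto (x+c_1,y)$ or $(x,y)\mapsto (c_1 - x,-y)$. It follows that the translation subgroup of ${\rm Ker}(D)$ is cyclic of order $n=\gcd(m_1,m_2)$, and if $sgn({\rm Ker}(D))=\{\pm1\}$ the existence of an involution swapping the two halves of the cylinder conjugates the top identification pattern onto the bottom one, which forces $m_1=m_2$.

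Next I would compute $b_0$ by writing the candidate affine map with horizontal-shear derivative in the chart as $(x,y)\mapsto(x+by+c_1,y)$. The shift it induces on the top is $bH/2+c_1$ and on the bottom is $-bH/2+c_1$; these must respectively lie in $(W/m_1)\mathbb{Z}$ and $(W/m_2)\mathbb{Z}$ modulo $W$. Eliminating $c_1$ shows that $bH$ is constrained to lie in the cyclic subgroup of $\mathbb{R}/W\mathbb{Z}$ generated by $W/\mathrm{lcm}(m_1,m_2)$, so $b_0 = {\rm mod}(X)/\mathrm{lcm}(m_1,m_2)$. If $sgn({\rm Ker}(D))=\{1\}$, then $\sharp{\rm Ker}(D)=n$ and $b_0 \sharp{\rm Ker}(D) = {\rm mod}(X)\gcd(m_1,m_2)/\mathrm{lcm}(m_1,m_2) \leq {\rm mod}(X)$. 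If $sgn({\rm Ker}(D))=\{\pm1\}$, the forced equality $m_1=m_2=:m$ yields $\sharp{\rm Ker}(D)=2m$ and $b_0={\rm mod}(X)/m$, so $b_0\sharp{\rm Ker}(D)=2\,{\rm mod}(X)$. In both cases $b_0\sharp{\rm Ker}(D)\leq 2\,{\rm mod}(X)$, which is precisely what is needed.

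The hard part will be the combinatorial bookkeeping in the first step: rigorously justifying that the vertical translation component of every element of ${\rm Ker}(D)$ vanishes, and that an involution in ${\rm Ker}(D)$ really conjugates the top identification pattern onto the bottom one. Both reductions depend critically on the separating hypothesis, since without it a top-to-bottom saddle identification could absorb a vertical shift or destroy the reflection relating $m_1$ and $m_2$. Once those combinatorial statements are nailed down, the remainder is a short computation with $\gcd$ and $\mathrm{lcm}$.
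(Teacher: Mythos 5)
Your proposal is correct and follows essentially the same route as the paper: the separating hypothesis makes the identifications of the two horizontal sides of $R$ independent, so the translation subgroup of ${\rm Ker}(D)$ can be upgraded to a parabolic element of $\Gamma(X,u)$ with $b\leq {\rm mod}(X)/\sharp{\rm Ker}(D)$ (resp.\ $2{\rm mod}(X)/\sharp{\rm Ker}(D)$ when $sgn({\rm Ker}(D))=\{\pm 1\}$), which yields $b_0\,\sharp{\rm Ker}(D)\leq 2\,{\rm mod}(X)$ and hence the ceiling bound. Your $\gcd$/$\mathrm{lcm}$ bookkeeping with $m_1,m_2$ is just a slightly sharper rendering of that same construction, and the vanishing of vertical translation components of ${\rm Ker}(D)$ that you flag as the hard part is already available from lemma \ref{kernel}.
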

\begin{proof}
By the assumption, identifications of two horizontal sides of $R$ are independent.
If $sgn({\rm Ker}(D))=\{1\}$ then, since there exists the translation 
$z \mapsto z+ {\tiny
\left(
  \begin{array}{cccc}
    \frac{W}{\sharp{\rm Ker}(D)} \\
    0 
  \end{array}
  \right)}
$ in ${\rm Ker}(D)$, it is easy to see that 
${\tiny \left[\left(
  \begin{array}{cccc}
    1 & \frac{{\rm mod}(X)}{\sharp{\rm Ker}(D)} \\
    0 & 1 
  \end{array}
  \right)
  \right]} \in \Gamma(X, u)$.
By the definition of $b_0$, there exists $m \in \mathbb{N}$ such that $\frac{{\rm mod}(X)}{\sharp{\rm Ker}(D)}=mb_0$.
Hence, $\frac{b_0 \sharp {\rm Ker}(D)}{4{\rm mod}(X)}+\frac{1}{2} \leq \frac{1}{4m}+\frac{1}{2}$.
By the same argument, if $sgn({\rm Ker}(D))=\{\pm 1\}$, then 
${\tiny \left[\left(
  \begin{array}{cccc}
    1 & \frac{{2\rm mod}(X)}{\sharp{\rm Ker}(D)} \\
    0 & 1 
  \end{array}
  \right)
  \right]} \in \Gamma(X, u)$ and hence, $\frac{b_0 \sharp {\rm Ker}(D)}{4{\rm mod}(X)}+\frac{1}{2} \leq \frac{1}{2m}+\frac{1}{2}$ for some $m\in \mathbb{N}$.
\end{proof}

Now we come back to the proof of theorem \ref{numberthm}.
To prove this, we estimate 
$\frac{b_0}{{\rm mod}(X)}$, $\frac{{\rm mod}(X)}{b_0}$, $b_0c_0$ and $\sharp {\rm Ker}(D)$ by $g, n, p$, and $k$.
Since 
${\tiny \left[\left(
  \begin{array}{cccc}
    1 & {\rm mod}(X) \\
    0 & 1 
  \end{array}
  \right)
  \right]} \in \Gamma(X, u)$, we have $\frac{b_0}{{\rm mod}(X)} \leq 1$ by the definition of $b_0$.
We set $L_0$ and $L_1$ to be the unions of all horizontal saddle connections corresponding to lower and upper  horizontal sides of the rectangle $R$, respectively.
Let $n_0$ be the number of horizontal saddle connections contained in $L_0$.
And let $n_1$ be the number of horizontal saddle connections contained in $L_1$.
We may assume that $n_0 \geq n_1$.
 \begin{lemma}\label{average}
 The inequality
\begin{center}
$\frac{n_0+n_1}{2} \leq 2(3g-3+n)$.
\end{center}
holds.
The equality holds if and only if all punctures of $X$ are poles of $q$ of order $1$ and all zeros of $q$ is of order $1$. 
\end{lemma}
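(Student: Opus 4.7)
My plan is to reduce the inequality to a bound on the number $N$ of distinct horizontal saddle connections of $(X,u)$, and then to bound $N$ using the Riemann--Roch identity $\sum_{p \in \bar X} \mathrm{ord}(p) = 4g - 4$ recalled in the remark in Section \ref{Preliminaries}.

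The central identity will be $N = 2g - 2 + |C|$, obtained by a prong count. At each critical point $p \in C$ the flat metric has cone angle $(\mathrm{ord}(p) + 2)\pi$, so there are exactly $\mathrm{ord}(p) + 2$ horizontal ``prongs'' (oriented horizontal half-rays emanating from $p$). Because $\theta = 0$ is a simple Jenkins--Strebel direction, every horizontal separatrix closes up to a saddle connection with both endpoints in $C$, and each saddle connection accounts for exactly two prongs. Summing,
\[
2N \;=\; \sum_{p \in C}(\mathrm{ord}(p) + 2) \;=\; (4g - 4) + 2|C|,
\]
so $N = 2g - 2 + |C|$. Since every horizontal saddle connection lies on the lower or upper side of $R$, we have $L_0 \cup L_1 = $ (all horizontal saddle connections), hence $n_0 + n_1 = |L_0| + |L_1| \leq 2|L_0 \cup L_1| = 2N$, which gives $\tfrac{n_0+n_1}{2} \leq N$.

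The remaining task is the estimate $|C| \leq 4g - 4 + 2n$. Let $Z$ be the set of zeros and $P$ the set of simple poles of $q$; integrability forces every puncture to be either a simple pole or a regular point, so $C$ decomposes as the disjoint union of $Z$, $P$, and the remaining (regular) punctures. The Riemann--Roch identity rewrites as $\sum_{z \in Z}\mathrm{ord}(z) = 4g - 4 + |P|$, and since $\mathrm{ord}(z) \geq 1$ at each zero one has $|Z| \leq 4g - 4 + |P|$, with equality iff every zero has order $1$. Using $|P| \leq n$, with equality iff every puncture is a simple pole, one gets $|C| = |Z| + n \leq 4g - 4 + 2n$. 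Combining yields $\tfrac{n_0+n_1}{2} \leq 2g - 2 + (4g - 4 + 2n) = 2(3g - 3 + n)$, and the equality case propagates to exactly the two conditions in the statement.

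The step that deserves the most care is the prong count in the half-translation setting, where cone angles can be arbitrary positive multiples of $\pi$ and need not be integer multiples of $2\pi$. I would verify it by unfolding a small neighbourhood of $p$ via $w \mapsto \int \sqrt{q}$, which realises the cone of angle $(\mathrm{ord}(p) + 2)\pi$ as $\mathrm{ord}(p) + 2$ Euclidean half-discs glued along their diameters; the horizontal direction meets each diameter exactly once, giving $\mathrm{ord}(p) + 2$ prongs. The simple Jenkins--Strebel hypothesis is crucial for the counting, as it ensures that no horizontal separatrix is recurrent, so every prong is genuinely the endpoint of a saddle connection and the identification ``prongs = $2 \cdot$(saddle connections)'' is exact.
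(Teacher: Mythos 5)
Your proof is correct and follows essentially the same route as the paper: the paper's one-line ``Euler characteristic'' computation $\frac{n_0+n_1}{2}=2g-2+\sharp C(X,u)$ is exactly your prong count, and the bound $\sharp C(X,u)\leq 4g-4+2n$ with its equality case is obtained the same way from integrability and Riemann--Roch. The only cosmetic difference is that the paper counts the boundary segments of the cylinder so that $n_0+n_1=2N$ holds as an equality, which also makes the ``only if'' direction of the equality statement immediate.
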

\begin{proof}
By considering the Euler characteristic of $X$, we have
\begin{center}
$\frac{n_0+n_1}{2}=2g-2+\sharp C(X, u)$.
\end{center}
Clearly, $\sharp C(X, u) \leq 4g-4+2n$.
And, $\sharp C(X, u) = 4g-4+2n$ if and only if all punctures of $X$ are poles of $q$ of order $1$ and all zeros of $q$ is of order $1$.
Hence, we obtain the claim.
\end{proof}
\begin{lemma}\label{mod/b_0}
\begin{eqnarray}
\frac{{\rm mod}(X)}{b_0} \leq \left\{
\begin{array}{ll}
8(3g-3+n)^2 & \left( sgn({\rm Ker}(D))=\{1\} \right),
\\[1em]
4(3g-3+n)^2 & \left( sgn({\rm Ker}(D))=\{\pm 1\} \right).
\end{array}
\right.
\nonumber 
\end{eqnarray}
\end{lemma}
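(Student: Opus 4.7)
The plan is to first observe that $k := {\rm mod}(X)/b_0$ is a positive integer, and then bound $k$ by analyzing how a minimal parabolic affine homeomorphism acts on the critical points of $(X,u)$ lying on the top and bottom of the rectangle $R$.

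For integrality, I would note that since $\theta=0$ is a simple Jenkins--Strebel direction, the Dehn twist about the unique horizontal cylinder is an element of ${\rm Aff}^+(X,u)$ with derivative $\begin{pmatrix}1 & {\rm mod}(X)\\ 0 & 1\end{pmatrix}$, so this matrix lies in $\Gamma(X,u)$. Discreteness of the Fuchsian group $\Gamma(X,u)$ forces the additive subgroup $\{b\in\mathbb{R}:\begin{pmatrix}1 & b\\ 0 & 1\end{pmatrix}\in\Gamma(X,u)\}$ of $\mathbb{R}$ to equal $b_0\mathbb{Z}$, so $k\in\mathbb{Z}_{>0}$.

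The main step is to realize an affine homeomorphism $h_0$ with $D(h_0)=\begin{pmatrix}1 & b_0\\ 0 & 1\end{pmatrix}$ on $R$ in the form $h_0(x,y)=(x+b_0 y+t_x,\,y)$ for some $t_x\in\mathbb{R}$ (the vertical translation vanishes since $h_0$ preserves the horizontal foliation), and to extract constraints on $b_0$ from the condition that $h_0$ descend to $X$. In particular, $h_0$ must permute the critical point sets $P_\mathrm{bot}$ and $P_\mathrm{top}$ on the lower and upper horizontal sides of $R$, viewed in $\mathbb{R}/W\mathbb{Z}$; its restriction to the bottom is translation by $t_x$, and to the top by $b_0 H+t_x$. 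Let $k_i$ ($i=0,1$) denote the order of the stabilizer of $P_\mathrm{bot}$ respectively $P_\mathrm{top}$ in $\mathbb{R}/W\mathbb{Z}$; this stabilizer acts freely on the corresponding point set, so $k_i \mid n_i$, and it equals $(W/k_i)\mathbb{Z}/W\mathbb{Z}$. The preservation conditions thus read $t_x\in (W/k_0)\mathbb{Z}$ and $b_0 H+t_x\in (W/k_1)\mathbb{Z}$. Eliminating $t_x$ gives $b_0 H\in(W/\mathrm{lcm}(k_0,k_1))\mathbb{Z}$, and since $b_0 H>0$, one gets $b_0 H\geq W/\mathrm{lcm}(k_0,k_1)$, so
\[
k \;\leq\; \mathrm{lcm}(k_0,k_1) \;\leq\; k_0 k_1 \;\leq\; n_0 n_1.
\]
Invoking Lemma \ref{average} (which gives $n_0+n_1\leq 4(3g-3+n)$) and AM--GM yields $n_0 n_1 \leq ((n_0+n_1)/2)^2 \leq 4(3g-3+n)^2$; this directly matches the claimed bound in the $sgn({\rm Ker}(D))=\{\pm 1\}$ case, and a fortiori gives the weaker $8(3g-3+n)^2$ bound in the $sgn({\rm Ker}(D))=\{1\}$ case (the extra factor of two in the translation-surface setting accommodating the coarser pairing between $P_\mathrm{bot}$ and $P_\mathrm{top}$, absent an involution relating them).

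The main obstacle is that the derivation above uses only the necessary condition that $h_0$ preserves the critical point sets; for $h_0$ to be a genuine self-homeomorphism of $X$ one also needs compatibility with the top--bottom gluing of horizontal saddle connections (the identification between $L_0$ and $L_1$). Since any such additional constraint can only raise the minimum admissible $b_0$ and hence lower $k$, the necessary conditions are enough to obtain the stated upper bound, even though the resulting estimate is likely not sharp.
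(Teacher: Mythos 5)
Your overall strategy is the same as the paper's: restrict an affine homeomorphism $h_0$ with $D(h_0)=\left(\begin{smallmatrix}1&b_0\\0&1\end{smallmatrix}\right)$ to the single horizontal cylinder, observe that its translation parts on the two boundary circles must preserve the critical-point configurations there, and turn this into a divisibility constraint on $b_0H/W$; your packaging via the stabilizer subgroups $k_0,k_1$ and the lcm is a clean variant of the paper's iteration argument (a power $h_0^{m_0}$ fixing $L_0$ pointwise, then a further power fixing $L_1$), and where it applies it even yields the sharper uniform constant $4(3g-3+n)^2$.

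However, there is a genuine omission: the normalization $h_0(x,y)=(x+b_0y+t_x,\,y)$ is not always available. Since $u$ is a half-translation structure (transition maps $z\mapsto\pm z+c$), the derivative of $h_0$ is only defined up to sign, and on the cylinder chart $h_0$ may act as $(x,y)\mapsto(-x-b_0y+t_x,\,H-y)$, i.e.\ it may exchange the two boundary circles $L_0$ and $L_1$. Your justification (``the vertical translation vanishes since $h_0$ preserves the horizontal foliation'') rules out a nonzero vertical translation but not this sign reversal, so the congruences $t_x\in(W/k_0)\mathbb{Z}$ and $b_0H+t_x\in(W/k_1)\mathbb{Z}$ need not both hold. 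This is exactly the case the paper's case division is built around: when $sgn(\mathrm{Ker}(D))=\{\pm1\}$ one can compose $h_0$ with an involution in $\mathrm{Ker}(D)$ to restore the $+$ sign (this is the paper's ``we may assume $h_{A_0}(L_i)=L_i$''), whereas when $sgn(\mathrm{Ker}(D))=\{1\}$ one cannot, which is why the paper's bound there carries the extra factor (via $m_0\le n_0+n_1$ instead of $m_0\le n_0$). The gap is easily filled: in the exchanging case $h_0^2$ preserves each boundary circle and restricts to the bottom one as translation by $\pm b_0H$, forcing $b_0H\in(W/k_0)\mathbb{Z}$ and hence $\mathrm{mod}(X)/b_0\le k_0\le n_0$, which is even stronger. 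With that case added your argument is complete and in fact improves the stated constant in the $sgn(\mathrm{Ker}(D))=\{1\}$ case; as written, though, the key formula for $h_0$ is asserted without covering all cases, and your closing remark about ``the coarser pairing between $P_{\mathrm{bot}}$ and $P_{\mathrm{top}}$'' gestures at this issue without actually addressing it.
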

\begin{proof}
Take $h_{A_0} \in {\rm Aff}^+(X, u)$ such that $D(h_{A_0})=A_0$.
If $sgn({\rm Ker}(D))=\{1\}$, then there exists $m_0\leq n_0+n_1$ such that $h_{A_0}^{m_0}$ fixes $L_0$ pointwise.
And there exists $m_1\leq n_1$ such that $h_{A_0}^{m_0m_1}$ fixes $L_1$ pointwise.
Then $m_0m_1b_0= k\cdot  {\rm mod}(X)$ for some $k \in \mathbb{N}$. 
Hence, by lemma \ref{average}, 
\begin{center}
$\frac{{\rm mod}(X)}{b_0}=\frac{m_0m_1}{k} \leq \frac{(n_0+n_1)^2}{2} \leq 8(3g-3+n)^2$.
\end{center}
If $sgn({\rm Ker}(D))=\{ \pm 1\}$, we may assume that $h_{A_0}(L_i)=L_i$ $(i=0,1)$.
There exists $m_0\leq n_0$ such that $h_{A_0}^{m_0}$ fixes $L_0$ pointwise.
And there exists $m_1\leq n_1$ such that $h_{A_0}^{m_0m_1}$ fixes $L_1$ pointwise.
Then $m_0m_1b_0= k\cdot  {\rm mod}(X)$ for some $k \in \mathbb{N}$. 
Hence, 
\begin{center}
$\frac{{\rm mod}(X)}{b_0}=\frac{m_0m_1}{k} \leq n_0n_1 \leq \frac{(n_0+n_1)^2}{4} \leq 4(3g-3+n)^2$.
\end{center} 
\end{proof}
 \begin{lemma}\label{ker}
 \begin{eqnarray}
\sharp {\rm Ker}(D) \leq \left\{
\begin{array}{ll}
2(3g-3+n) & \left( sgn({\rm Ker}(D))=\{1\} \right),
\\[1em]
4(3g-3+n) & \left( sgn({\rm Ker}(D))=\{\pm 1\} \right).
\end{array}
\right.
\nonumber 
\end{eqnarray}
\end{lemma}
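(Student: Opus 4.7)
The plan is to reduce $\sharp {\rm Ker}(D)$ to the size of its translation subgroup $sgn^{-1}(\{1\})$ and then bound the latter using the number of horizontal saddle connections, via Lemma \ref{average}.

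Set $N := \sharp sgn^{-1}(\{1\})$. By Lemma \ref{kernel}, $sgn^{-1}(\{1\})$ is a cyclic group of real translations of the rectangle $R$; since the translation $z \mapsto z+W$ is trivial on $X$, this cyclic group consists precisely of the translations $z \mapsto z + kW/N$ for $k = 0, 1, \dots, N-1$. When $sgn({\rm Ker}(D)) = \{1\}$, one has $\sharp {\rm Ker}(D) = N$. When $sgn({\rm Ker}(D)) = \{\pm 1\}$, Lemma \ref{involution} shows that every element of $sgn^{-1}(\{-1\})$ is an involution, so $sgn^{-1}(\{1\})$ has index $2$ in ${\rm Ker}(D)$ and $\sharp {\rm Ker}(D) = 2N$. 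It therefore suffices to prove $N \leq 2(3g-3+n)$.

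The key claim is that $N$ divides both $n_0$ and $n_1$. Let $h_* \in sgn^{-1}(\{1\})$ be the translation $z \mapsto z + W/N$. View the lower horizontal side of $R$ as a circle of circumference $W$ carrying $n_0$ marked points, namely the endpoints of the $n_0$ saddle connections that make up $L_0$. Since $h_*$ is an affine self-map of $(X, u)$ and preserves $L_0$, it acts on this parametrizing circle as rotation by $W/N$ and sends the set of marked points to itself. This rotation has exact order $N$ and acts freely on the circle, so every orbit on the set of endpoints has size $N$; hence $N \mid n_0$. The identical argument applied to the upper side gives $N \mid n_1$, whence
\[
N \leq \min(n_0, n_1) \leq \frac{n_0 + n_1}{2} \leq 2(3g-3+n)
\]
by Lemma \ref{average}. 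Substituting back, $\sharp {\rm Ker}(D) \leq 2(3g-3+n)$ in the first case and $\sharp {\rm Ker}(D) \leq 4(3g-3+n)$ in the second.

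The main delicate point is the divisibility argument $N \mid n_0$: one must view the $n_0$ endpoints as points on the parametrizing circle (the lower side of $R$, before any horizontal identifications), rather than as points of $X$, so that distinct endpoints remain distinct and the rotation generated by $h_*$ acts without fixed points. Once this distinction is made, freeness of the rotation on a circle with finitely many marked points forces $N$ to divide the number of marked points, and the rest is a direct application of Lemma \ref{average}.
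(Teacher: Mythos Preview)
Your proof is correct and follows essentially the same route as the paper: reduce to bounding $N=\sharp sgn^{-1}(\{1\})$, use that a generating translation permutes the horizontal saddle connections on a side of $R$, and then invoke Lemma~\ref{average}. The paper's version is terser---it simply notes that for a generator $h$ of $sgn^{-1}(\{1\})$ there is some $m\le n_1$ with $h^m$ fixing $L_1$ pointwise, hence $h^m=\mathrm{id}$ and $N\le m\le n_1\le\frac{n_0+n_1}{2}$---whereas you spell out the stronger divisibility $N\mid n_0$ and $N\mid n_1$, but the underlying idea is identical.
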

\begin{proof}
Let $h \in sgn^{-1}(\{1\})$.
Then $h(L_i)=L_i$ $(i=0,1)$ and there exists  $m \leq n_1$ such that $h^m$ fixes $L_1$ pointwise.
Since $h$ is a translation, $h^m=id$.
Thus, 
\begin{center}
$\sharp sgn^{-1}(\{1\}) \leq m \leq \frac{n_0+n_1}{2} \leq 2(3g-3+n)$
\end{center}
by lemma \ref{average}.
\end{proof}
\begin{remark}
For the flat surface $X_3$ as in example \ref{D-example2}, the equations of lemma \ref{average} and lemma \ref{ker} hold. 
\end{remark}
 Finally, to estimate $b_0c_0$, we prove a new property of finitely generated Fuchsian groups.
\begin{theorem} \label{fuchsian}
Let $\Gamma$ be a Fuchsian group of type $(p,k: \nu_1, \cdots, \nu_k)$.
Here,  $\nu_i \in \{2, \cdots, \infty\}$.
Assume that $k_0$ is the number of $\nu_i$'s which are equal to $\infty$.
If $\Gamma$ contains $ \tiny \left[\left(
  \begin{array}{cccc}
    1 & 1 \\
    0 & 1
  \end{array}
  \right)
  \right]$ and it is not a power of other elements of $\Gamma$, then there exists  
  $ \tiny \left[\left(
  \begin{array}{cccc}
    a & b \\
    c & d
  \end{array}
  \right)
  \right] \in \Gamma$
  such that 
\begin{equation}
\displaystyle
1\leq |c| <{\rm Area}(\mathbb{H}/\Gamma)-k_0+1.
\nonumber
\end{equation}
Here, ${\rm Area}(\mathbb{H}/\Gamma)$ is the hyperbolic area of $\mathbb{H}/\Gamma$.
\end{theorem}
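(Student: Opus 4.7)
The plan is to use the Ford region $D$ of $\Gamma$ (Proposition \ref{fundamental}) and to bound its hyperbolic area by separating the contribution from the cusp at $\infty$ from those of the remaining $k_0-1$ cusps of $\mathbb{H}/\Gamma$. Set
\[
c_0 = \inf\left\{|c| : \begin{pmatrix} a & b \\ c & d \end{pmatrix} \in \Gamma_0\right\}.
\]
Shimizu's lemma (Lemma \ref{Shimizu}) gives $c_0 \geq 1$, and the infimum is attained since the set of $|c|$-values in $\Gamma_0$ is discrete in $\mathbb{R}_{>0}$. The conclusion then amounts to the strict inequality $c_0 < {\rm Area}(\mathbb{H}/\Gamma) - k_0 + 1$, and any element realizing $|c|=c_0$ supplies the matrix whose existence is claimed.

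First I would show that the strip
\[
T = \{z \in \mathbb{H} : |{\rm Re}(z)| < 1/2,\ {\rm Im}(z) > 1/c_0\}
\]
is contained in $D$: for $A = \begin{pmatrix}a&b\\c&d\end{pmatrix} \in \Gamma_0$ and $z \in T$, $|cz+d|^2 \geq c^2 \cdot {\rm Im}(z)^2 \geq c_0^2 \cdot (1/c_0)^2 = 1$, with strict inequality since ${\rm Im}(z) > 1/c_0$, so $z \in D_A$. A direct integration gives ${\rm Area}(T) = c_0$. Since ${\rm Area}(D) = {\rm Area}(\mathbb{H}/\Gamma)$, it suffices to prove ${\rm Area}(D \setminus T) > k_0 - 1$.

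At each cusp $[x_i]$ of $\mathbb{H}/\Gamma$ distinct from $[\infty]$, let $P_{x_i} \in \Gamma$ generate the parabolic stabilizer of $x_i$, and let $-n_i$ denote its $(2,1)$-entry. A short computation using a M\"obius transformation sending $x_i$ to $\infty$ shows that $P_{x_i}$ is conjugate to $z \mapsto z+n_i$, and that the horoball at $x_i$ whose quotient in $\mathbb{H}/\Gamma$ is an embedded horocyclic annulus of area $1$ is a Euclidean disk tangent to $\mathbb{R}$ at $x_i$ with top at height $1/|n_i|$. Because $P_{x_i} \in \Gamma_0$, the definition of $c_0$ gives $|n_i| \geq c_0$; hence this horoball lies in $\{{\rm Im}(z) \leq 1/c_0\}$ and is disjoint from $T$. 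Provided the $k_0-1$ area-$1$ cusp neighborhoods $U_i$ are pairwise disjoint in $\mathbb{H}/\Gamma$, their lifts inside $D\setminus T$ contribute total area $k_0 - 1$.

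To promote this to a strict inequality, I would invoke that any complete finite-area hyperbolic $2$-orbifold has a nonempty thick part lying outside every cuspidal horocyclic neighborhood, so some positive-measure subset of $D\setminus T$ remains beyond $\bigcup_i U_i$; hence ${\rm Area}(D\setminus T) > k_0 - 1$, giving $c_0 < {\rm Area}(\mathbb{H}/\Gamma) - k_0 + 1$. The main obstacle is establishing the pairwise disjointness of the area-$1$ horoballs at distinct non-$\infty$ cusps $[x_i], [x_j]$: the single-cusp Shimizu bound above compares each $P_{x_i}$ only to the fixed $P_\infty = \begin{pmatrix}1&1\\0&1\end{pmatrix}$, so ruling out overlap between horoballs at $x_i$ and $x_j$ requires a separate Shimizu-type inequality applied to the commutator $P_{x_i}P_{x_j}^{-1}$ (or an appeal to the Margulis lemma for Fuchsian groups), possibly after shrinking the chosen horoballs while preserving total area $\geq k_0-1$.
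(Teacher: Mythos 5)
Your proposal is correct and follows essentially the same route as the paper's proof: the Ford region contains the strip of area $c_0$ above height $1/c_0$, while the remaining $k_0-1$ cusps contribute pairwise disjoint area-$1$ horoball neighborhoods lying below that height, and comparing with ${\rm Area}(\mathbb{H}/\Gamma)={\rm Area}(D)$ forces $c_0<{\rm Area}(\mathbb{H}/\Gamma)-k_0+1$ (the paper runs the identical count as a proof by contradiction, with $A={\rm Area}(\mathbb{H}/\Gamma)-k_0+1$ playing the role of your $c_0$). The pairwise-disjointness issue you flag as the main obstacle is settled in the paper only by citing Shimizu's lemma, and the standard justification is the one you gesture at: apply Shimizu after conjugating each finite cusp to $\infty$ with its primitive parabolic normalized to $z\mapsto z+1$ (rather than to a product such as $P_{x_i}P_{x_j}^{-1}$), so your argument has no substantive gap beyond what the paper itself leaves implicit.
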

\begin{remark}
It is known that 
\begin{center}
$\displaystyle {\rm Area}(\mathbb{H}/\Gamma)=2\pi\left(2p-2+\sum_{i=1}^k(1-\frac{1}{\nu_i})\right)$
\end{center}
if $\Gamma$ is a Fuchsian group of type $(p, k: \nu_1, \cdots, \nu_k)$.
See \cite{FarKra92}.
\end{remark}
\begin{proof}
Set $A={\rm Area}(\mathbb{H}/\Gamma)-k_0+1$.
Assume that $|c|\geq A$ or $c=0$ for all 
$\tiny \left[\left(
  \begin{array}{cccc}
    a & b \\
    c & d
  \end{array}
  \right)
  \right] \in \Gamma$.
Set $\Gamma_0=\{{\tiny \left[\left(
  \begin{array}{cccc}
    a & b \\
    c & d
  \end{array}
  \right)
  \right]} \in \Gamma: c \not =0\}$. 
We consider the Ford region $D$ of $\Gamma$. 
Here, the Ford region $D$ is defined by
\begin{center}
$\displaystyle D=\{ z \in\mathbb{H}: |{\rm Re}(z)|<\frac{1}{2}\} \cap \bigcap_{B \in \Gamma_0}D_B$
\end{center}
and
\begin{center}
$D_B=\{ z \in\mathbb{H}: |z+\frac{d}{c}|>\frac{1}{|c|}\}$.
\end{center}
By the assumption, $D$ contains the region $D^\prime=\{z\in\mathbb{H}: |{\rm Re}(z)|<\frac{1}{2}, {\rm Im}(z)>\frac{1}{A}\}$.
Let $v_1, \cdots, v_{k_0}$ be punctures of $\mathbb{H}/\Gamma$.
Assume that $\infty \in \overline{\mathbb{H}}$ corresponds to the puncture $v_{k_0}$.
There exists the horodisk $U_{k_0}$  around $v_{k_0}$ which is $D^\prime$ in $D$.
And, by lemma \ref{Shimizu}, there exist horodisks $U_i$ around $v_i$ which are disjoint each other and their hyperbolic areas are $1$ for all $i=1, \cdots, k_0-1$.
Since every domain as figure \ref{Ford2} has area $\pi-2$, we conclude that $U_i \cap U_{k_0}=\phi$ for all $i=1, \cdots, k_0-1$.
Hence, 
\begin{eqnarray}
{\rm Area}(\mathbb{H}/\Gamma) &>& \int_{D^\prime}\frac{dxdy}{y^2}+(k_0-1)\cdot 1 \nonumber\\
&=& A+k_0-1 \nonumber \\
&=& {\rm Area}(\mathbb{H}/\Gamma). \nonumber
\end{eqnarray}
This is a contradiction.
\end{proof}
\begin{figure}[h]
 \begin{center}
  \includegraphics[keepaspectratio, scale=1]{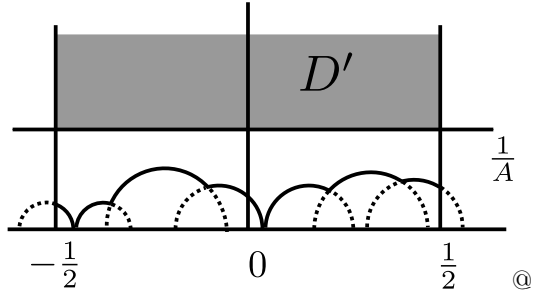}@
  \caption{The region $D^\prime$ contained in the Ford region $D$ of $\Gamma$}
\label{Ford}
 \end{center}
\end{figure}
\begin{figure}[h]
 \begin{center}
  \includegraphics[keepaspectratio, scale=1]{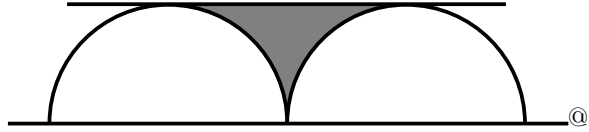}@
  \caption{A region in $\mathbb{H}$ corresponding to a neighborhood of a puncture of $\mathbb{H}/\Gamma$ of area $\pi-2$ }
\label{Ford2}
 \end{center}
\end{figure}

\begin{remark}
Let $\Gamma$ be a Fuchsian group of type $(p,k: \infty, \cdots , \infty )$ with $k \geq 1$. 
We suppose that $\Gamma$ satisfies the assumptions of theorem \ref{fuchsian}.
Take the commutator $Q$
of  
${\tiny \left[\left(
  \begin{array}{cccc}
    1 & 1 \\
    0 & 1
  \end{array}
  \right)
  \right]}, {\tiny \left[\left(
  \begin{array}{cccc}
    a & b \\
    c & d
  \end{array}
  \right)
  \right]} \in \Gamma$.
Since ${\rm tr}(Q)=c^2+2>2$, theorem \ref{fuchsian} implies that the Riemann surface $\mathbb{H}/\Gamma$ has a closed geodesic whose length is less than $2 \cosh^{-1}\left( \frac{\left(2 \pi (2p-2+k)-k+1 \right)^2}{2} +1\right)$. 
If this geodesic is simple, it has a collar neighborhood whose width is greater than 
$2 \sinh^{-1}\left(\frac{2}{\gamma(p, k)\sqrt{\gamma(p, k)^2+4}} \right)$, where $\gamma(p, k)=2 \pi (2p-2+k)-k+1$.
\end{remark}

\begin{proof}[Proof of theorem \ref{numberthm}]
If $sgn({\rm Ker}(D))=\{1\}$, by proposition \ref{number2}, lemma \ref {mod/b_0}, lemma \ref {ker}, and theorem \ref{fuchsian},
\begin{eqnarray}
\sharp S(X, u) &\leq&  8(3g-3+n)^2{\rm Area}(\mathbb{H}/\Gamma(X, u))(6g-5+2n)-2g+2 \nonumber\\
&\leq & 32\pi(2p-2+k)(3g-3+n)^2(3g-\frac{5}{2}+n)-2g+2. \nonumber
\end{eqnarray}
And, if  $sgn({\rm Ker}(D))=\{\pm 1\}$,
\begin{eqnarray}
\sharp S(X, u) &\leq&  16(3g-3+n)^2{\rm Area}(\mathbb{H}/\Gamma(X, u))(3g-2+n)-2g+2 \nonumber\\
&\leq & 32\pi(2p-2+k)(3g-3+n)^2(3g-2+n)-2g+2. \nonumber
\end{eqnarray}
\end{proof}
\section*{Acknowledgments}
This work was supported by Global COE Program ``Computationism as a Foundation for the Sciences".
The author is grateful to Hiroshige Shiga for his valuable comments and suggestions. 
The author is also grateful to Curtis T McMullen for telling me related papers. 
Thanks to Yunping Jiang and Sudeb Mitra for their suggestions about theorem \ref{fuchsian}.
\nocite{GutHubSch03}
\nocite{Moller06}
\nocite{Veech91}
\bibliography{ref}

\end{document}